\newcommand{\cal}{\mathcal}
\newtheorem{theorem}{Theorem}[section]
\newtheorem{lemma}[theorem]{Lemma}
\newtheorem{corollary}[theorem]{Corollary}
\newtheorem{proposition}[theorem]{Proposition}
\newtheorem{definition}[theorem]{Definition}
\newtheorem{remark}[theorem]{Remark}
\numberwithin{equation}{section}
\newcommand{\useonX}{} 
\newcommand{\useonM}{\underline}
\newcommand{\tm}{\useonX{m}}
\newcommand{\tB}{\useonX{B}}
\newcommand{\tS}{\useonX{S}}
\newcommand{\tc}{\useonX{c}}
\newcommand{\tv}{\useonX{v}}
\newcommand{\tw}{\useonX{w}}
\newcommand{\hB}{\useonM{B}}
\newcommand{\hS}{\useonM{S}}
\newcommand{\hc}{\useonM{c}}
\newcommand{\hv}{\useonM{v}}
\newcommand{\hw}{\useonM{w}}
\newcommand{\hm}{\useonM{m}}
\newcommand{\Pa}{\mathbf{P}}
\newcommand{\Fu}{\mathbf{F}}
\newcommand{\ideal}{\partial X}
\newcommand{\sqbd}{\partial^2 X}
\newcommand {\N}{\mathbb{N}} 
\newcommand {\Z}{\mathbb{Z}} 
\newcommand {\R}{\mathbb{R}} 
\DeclareMathOperator{\vol}{vol}
\DeclareMathOperator{\diam}{diam}
\DeclareMathOperator{\id}{Id}
\DeclareMathOperator{\supp}{supp}
\DeclareMathOperator{\pr}{pr}
\DeclareMathOperator{\Leb}{Leb}
\DeclareMathOperator{\grad}{grad}
\DeclareMathOperator{\inj}{inj}
\begin{document}
\title[Closed geodesics]{Closed geodesics on surfaces without conjugate points}
\author{Vaughn Climenhaga}
\author{Gerhard Knieper}
\author{Khadim War}
\address{Dept.\ of Mathematics, University of Houston, Houston, TX 77204}
\address{Dept.\ of Mathematics, Ruhr University Bochum, 44780 Bochum, Germany}
\address{IMPA, Estrada Dona Castorina 110, Rio de Janeiro, Brazil
}
\email{climenha@math.uh.edu}
\email{gerhard.knieper@rub.de}
\email{khadim@impa.br}

\date{\today}

\subjclass[2020]{37C35, 
37D40, 
53C22} 

\thanks{V.C.\ is partially supported by NSF grant DMS-1554794.}
\thanks{G.K. and K.W. are partially supported by the German Research Foundation (DFG),
CRC/TRR 191, \textit{Symplectic structures in geometry, algebra and dynamics.}}

\maketitle

\begin{abstract}
We obtain Margulis-type asymptotic estimates for the number of free homotopy classes of closed geodesics on certain manifolds without conjugate points.
Our results cover all compact surfaces of genus at least 2 without conjugate points.  
\end{abstract}

\section{Introduction}

\subsection{Main results}

Given a closed Riemannian manifold $(M,g)$,
it is well known that each free homotopy class of loops contains at least one closed geodesic.
Let $P(t)$ denote the set of free homotopy classes containing a closed geodesic with length at most $t$, and $\#P(t)$ its cardinality.  From the point of view of dynamics and geometry  it is of considerable interest 
 to estimate $\#P(t)$ as $t$ tends to infinity. 

\begin{theorem}\label{thm:surfaces}
Let \((M,g)\) be a closed connected surface of genus at least $2$ without conjugate points. Then
\begin{equation}\label{eqn:margulis}
\# P(t) \sim \frac{e^{ht}}{ht},
\end{equation}
where $h$ is the topological entropy of the geodesic flow on the unit tangent bundle $SM$, and the notation $f_1\sim f_2$ means
$\frac{f_1(t)}{f_2(t)} \to 1$ as $t\to\infty$.
\end{theorem}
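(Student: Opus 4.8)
\emph{Reduction to a counting problem in the fundamental group.} The plan is to deduce Theorem~\ref{thm:surfaces} from the construction of a mixing measure of maximal entropy for the geodesic flow $g^t$ on $SM$ carrying a Margulis-type product structure, and then to run the classical Margulis counting argument on top of it. Write $X=\widetilde M$ for the universal cover and $\Gamma=\pi_1(M)$ for the deck group. Since $M$ is a closed surface of genus $\ge2$ without conjugate points, $X$ is a uniform visibility manifold: it admits a boundary at infinity $\ideal$, every pair of distinct points of $\ideal$ is joined by a geodesic (unique off a measure-zero flat-strip locus), giving the identification $SX\cong\sqbd\times\R$ off that locus, and every nontrivial $\gamma\in\Gamma$ is axial with translation length $\ell(\gamma)=\min_{p\in X}d(p,\gamma p)$ attained along an axis that projects to a closed geodesic of $M$. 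Every free homotopy class of loops contains a closed geodesic, and such a class lies in $P(t)$ exactly when $\ell(\gamma)\le t$ for the corresponding conjugacy class $[\gamma]$; hence $\#P(t)=\#\{[\gamma]\in\mathrm{Conj}(\Gamma)\setminus\{[e]\}:\ell(\gamma)\le t\}$. Non-primitive classes $\gamma=\gamma_0^{k}$ with $k\ge2$ satisfy $\ell(\gamma)=k\,\ell(\gamma_0)$ and so contribute only $O(e^{ht/2})$, below the main term; it therefore suffices to count primitive conjugacy classes of translation length $\le t$.

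\emph{Patterson--Sullivan and Bowen--Margulis measures.} From the Poincar\'e series of $\Gamma$ I would build a Patterson--Sullivan density $\{\mu_p\}_{p\in X}$ on $\ideal$ of dimension equal to the critical exponent $\delta=\delta_\Gamma$. For surfaces without conjugate points the critical exponent, the volume-growth entropy of $X$, and the topological entropy $h$ of $g^t$ all coincide (by Freire--Ma\~n\'e, together with the standard comparison between orbit growth and volume growth), so $\delta=h$. Transporting $\mu_p\otimes\mu_p$ through the Hopf parametrization and adjoining arclength $dt$ in the flow direction, with the standard Gromov-product normalization, produces a $\Gamma$-invariant and $g^t$-invariant measure on $SX$ whose quotient $m$ on $SM$ is finite; by the known uniqueness of the measure of maximal entropy for geodesic flows on such surfaces, $m$ is that unique measure, $h_m(g^t)=h$, and $m$ is ergodic, fully supported, and gives zero mass to the flat-strip set. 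Its conditional measures along the strong stable and strong unstable horospherical sets scale by $e^{-ht}$ and $e^{ht}$ under $g^t$, so together with arclength in the flow direction they furnish the Margulis local product decomposition $dm\approx d\mu^{ss}\,d\mu^{uu}\,dt$ on small flow boxes.

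\emph{Mixing of $m$: the main obstacle.} The decisive input is that $(SM,g^t,m)$ is \textbf{mixing}, and I expect this to be the principal difficulty. Since $m$ is ergodic with $h_m>0$, a dichotomy in the spirit of Babillot reduces mixing to \emph{non-arithmeticity} of the length spectrum $L_\Gamma=\{\ell(\gamma):\gamma\in\Gamma\setminus\{e\}\}$, i.e.\ to ruling out $L_\Gamma\subset c\Z$ for some $c>0$ (equivalently, that the flow on $\supp m$ is a constant-roof suspension over an isometry, which would force $h_m=0$). Establishing this, and more broadly supplying the divergence, contraction and shadowing estimates needed for the merely continuous stable and unstable horospherical foliations of a surface without conjugate points --- where there is no Anosov or $\mathrm{CAT}(-1)$ structure to fall back on --- is the technical heart of the argument; the required facts come from the structure theory of these foliations in the no-conjugate-points setting.

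\emph{The Margulis counting argument.} Granted mixing, I would prove the equidistribution of closed geodesics: the measures $e^{-ht}\sum_{[\gamma]\text{ prim},\,\ell(\gamma)\le t}\mathcal{L}_\gamma$, where $\mathcal{L}_\gamma$ is the arclength measure along the closed geodesic in the class $[\gamma]$, converge weakly as $t\to\infty$ to a fixed positive multiple of $m$. This is the Margulis mechanism proper: one localizes in a small flow box adapted to the product decomposition, uses the $e^{\pm ht}$-scaling of the horospherical conditionals to express the number of suitably marked nearly-closed orbit segments of length in $[t,t+\Delta]$ through the box in terms of the $m$-measure of a thickened return set, and invokes mixing to evaluate that measure asymptotically with error uniform in $t$. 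Passing to total masses gives $\psi(t):=\sum_{[\gamma]\text{ prim},\,\ell(\gamma)\le t}\ell(\gamma)\sim c\,e^{ht}$ for a constant $c$ that, because $m$ is the measure of maximal entropy with entropy precisely $h$ (so that the Shannon--McMillan-type count built from the product structure is governed by the scaling rate $h$), works out to $c=1/h$. A partial-summation argument, identical in form to the deduction of the prime number theorem from the asymptotics of the Chebyshev function, then converts $\psi(t)\sim e^{ht}/h$ into $\#\{[\gamma]\text{ prim}:\ell(\gamma)\le t\}\sim e^{ht}/(ht)$, and since non-primitive classes are negligible, $\#P(t)\sim e^{ht}/(ht)$. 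What remains is bookkeeping: checking that distinct conjugacy classes give genuinely separated orbit segments in the box, and that classes represented only by geodesics bounding flat strips grow at exponential rate strictly below $h$ and hence disappear into the error term.
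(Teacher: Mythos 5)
Your high-level strategy — Patterson--Sullivan density of dimension $h$, the Bowen--Margulis measure via the Hopf parametrization, mixing, the Margulis box-counting mechanism, and a summation step — is the same skeleton as the paper's proof. But you have located the difficulty in the wrong place, and the step you treat as routine is where the genuine gap lies. Mixing is not the obstacle here: it was already established (together with existence, uniqueness, the product structure \eqref{eqn:Kaim}, and almost expansivity of the MME) in \cite{CKW}, and the present paper simply imports it. The real problem, spelled out in Remark \ref{rmk:problem}, is that in the no-conjugate-points setting the stable and unstable (horospherical) bundles need not be continuous or transverse \cite{BBB}, so the phrase ``conditional measures along the strong stable and strong unstable horospherical sets scale by $e^{\mp ht}$, furnishing the Margulis local product decomposition on small flow boxes'' does not produce any usable flow box: the set of vectors with prescribed past and future boundary points, $H^{-1}(\Pa\times\Fu\times\{0\})$, can a priori have large diameter (many geodesics joining the same pair of points at infinity), which destroys both the injectivity of the projection to $SM$ and the uniformity of the Busemann/scaling estimates. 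The paper's resolution — anchoring the box at an \emph{expansive} vector $\tv_0\in\mathcal{E}$ so that Lemma \ref{lem:inj0} forces the cross-section to have diameter $<\epsilon/2$, defining $\tB$ and the thinner slice $\tS$ of depth $\epsilon^2$ purely in boundary coordinates, and proving the closing lemma and depth control (Lemmas \ref{lem:vis}, \ref{lem:c}, \ref{lem:s-interval}, \ref{lem:full-branch}) via the uniform visibility axiom rather than via shadowing along invariant foliations — is entirely absent from your proposal. Your appeal to a ``measure-zero flat-strip locus'' is also borrowed from nonpositive curvature and is not available here; the correct substitute is the almost-expansivity hypothesis of Definition \ref{def:expansive}.

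A second, smaller gap: your ``bookkeeping'' about distinct conjugacy classes giving separated orbit segments hides a real multiplicity issue. The map $\Theta$ from marked periodic segments in the box to group elements need not be injective (the boundary circle of a M\"obius strip gives two segments mapping to $\beta^2$), and one must bound $\#\Theta^{-1}(\gamma)$ by $d(\gamma)$ and then show that the total contribution of non-primitive $\gamma$ with endpoints in $\Pa\times\Fu$ is $O(e^{2ht/3})$ (Lemmas \ref{lem:kind-of-injective} and \ref{lem:some-mult-small}); your blanket assertion that non-primitive classes contribute $O(e^{ht/2})$ is in the right spirit but is not by itself a proof that the correspondence between $\Pi(t)$ and $\Gamma(t,\epsilon)$ is nearly bijective. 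Finally, your closing step via the Chebyshev-type function $\psi(t)\sim e^{ht}/h$ and partial summation is a legitimate alternative to the paper's route (two-sided estimates on $\#C(t)$ for lengths in $(t-\epsilon,t]$, a Riemann sum over $\epsilon$-intervals, and $\epsilon\to0$), but it still requires the same uniform-in-$t$ box estimates that depend on the missing construction above.
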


In \S\ref{sec:history} we discuss the history of the Margulis asymptotics \eqref{eqn:margulis} and various related results, and in \S\ref{sec:outline} we outline the proof, which uses ideas from the original work of Margulis \cite{Mar1,Mar2} and from a recent preprint of Ricks \cite{Ri}. 
The following result clarifies the ingredients needed for our argument; precise definitions are in \S\ref{sec:uniq}. 

\begin{theorem}\label{thm:main}
Let $(M,g)$ be a closed Riemannian manifold without conjugate points that admits a background metric $g_0$ of negative curvature and satisfies the divergence property (intersecting geodesics in the universal cover diverge).  If the geodesic flow has a unique measure of maximal entropy and if this measure is almost expansive in the sense of Definition \ref{def:expansive}, then \eqref{eqn:margulis} holds.
\end{theorem}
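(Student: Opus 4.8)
The plan is to follow the template of Margulis's original argument \cite{Mar1,Mar2}, in the form recently adapted to nonpositively curved spaces by Ricks \cite{Ri}, and to carry it out in the no-conjugate-points setting. Write $X=\tilde M$ for the universal cover with the lifted metric, $\Gamma=\pi_1(M)$, and $\partial X$ for the boundary at infinity. Since $g_0$ has negative curvature, $\Gamma$ is word hyperbolic and $\partial X$ is $\Gamma$-equivariantly homeomorphic to the Gromov boundary $\partial\Gamma$; the divergence property guarantees that distinct points of $\partial X$ are joined by geodesics, that Busemann functions are well behaved, and that the stable and unstable horospherical partitions of $SX$ are well defined. The first block of work is to set up, in this generality, the Patterson--Sullivan/Margulis apparatus: a $\Gamma$-quasi-invariant family $\{\mu_x\}_{x\in X}$ of finite measures on $\partial X$ transforming by the Busemann cocycle with exponent $h$, and the induced Bowen--Margulis measure on the space of parametrized geodesics, which descends to a flow-invariant probability $m$ on $SM$. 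By hypothesis $m$ is the unique MME, so one may take the MME directly and produce from it leafwise ``Margulis measures'' $\{\mu^{u}_W\}$ on strong unstable sets; the property to extract is the scaling relation $(\phi_s)_*\mu^{u}_{W}=e^{-hs}\,\mu^{u}_{\phi_s W}$ (equivalently, the $\mu^u$-mass of $\phi_s A$ is $e^{hs}$ times that of $A$), a reciprocal relation for strong stable sets, and a local product decomposition $m\approx\mu^{u}\times\mu^{s}\times dt$. Almost expansiveness (Definition \ref{def:expansive}) is exactly what makes this work: it confines the non-expansive set --- flat strips and their relatives --- to an $m$-null set, off which the horospherical partitions provide genuine local product coordinates.

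Second, I would establish that $(\phi_t)$ is mixing with respect to $m$. Given the product structure and ergodicity of $m$, this reduces by a Babillot-type argument to excluding the arithmetic case in which the length spectrum of $(M,g)$ lies in a discrete subgroup $c\mathbb{Z}$ of $\mathbb{R}$; this exclusion is available here because $M$ carries the negatively curved metric $g_0$, which (together with the genus/topological hypotheses) forces a sufficiently rich length spectrum. If an appropriate mixing statement has already been recorded earlier in the paper, this step is simply a citation.

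Next comes the counting. Each free homotopy class of loops corresponds to a conjugacy class $[\gamma]$ in $\Gamma$, its closed geodesics are precisely the axes of $\gamma$, and they all have length equal to the translation length $\ell(\gamma)=\inf_{x\in X}d(x,\gamma x)$; hence $\#P(t)=\#\{[\gamma]:\ell(\gamma)\le t\}$. To estimate this, fix a finite cover of $SM$ by small flow boxes $B\cong U\times S\times(-\varepsilon,\varepsilon)$ centered at regular (expansive) vectors and of nearly full $m$-measure. An element $\gamma$ with $\ell(\gamma)$ close to $s$ whose axis passes near $B$ produces, in $SM$, a near-return of $B$ to itself under $\phi_s$, and conversely near-returns are accounted for by such $\gamma$. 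Combining the scaling relation for $\{\mu^u_W\}$ with mixing of $m$, one evaluates the weighted count $\sum_{[\gamma]:\,\ell(\gamma)\le t}\ell(\gamma)\sim \tfrac{e^{ht}}{h}$, and then a summation-by-parts (Tauberian) argument converts this into $\#P(t)\sim \tfrac{e^{ht}}{ht}$, the extra factor $t^{-1}$ reflecting that a closed orbit of length $\ell$ carries $\ell$ units' worth of periodic points.

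The main obstacle I expect is precisely this last, quantitative use of almost expansiveness. Unlike in the Anosov or CAT$(-1)$ setting the flow is not expansive, the horospherical partitions are only continuous, and the non-expansive set --- although $m$-null --- is not invariant in a way that makes it automatically harmless for a counting problem that a priori does not ``see'' $m$. One must therefore prove uniform-in-$s$ estimates bounding how much of $\phi_s B$ can concentrate near the non-expansive set and how a conjugacy class can fail to be detected by a single flow box, and patch the local counts with matching error bounds. A secondary difficulty is separating primitive from non-primitive classes and controlling ``higher-rank'' homotopy classes (those whose axis set is a nontrivial flat strip): one must show these contribute only lower-order terms, and this is where the divergence property and the fine structure of the non-expansive set are needed.
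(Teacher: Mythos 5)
Your outline reproduces the correct Margulis--Ricks architecture (conformal densities, scaling, mixing, flow boxes, a closing lemma, a Tauberian finish), and it correctly locates the danger zones, but the two steps you flag as ``the main obstacle'' are precisely the ones that must be executed differently, and as written your plan for them would fail. First, the local product decomposition $m\approx\mu^u\times\mu^s\times dt$ built from strong stable/unstable (horospherical) partitions is exactly what is \emph{not} available here: by the Ballmann--Brin--Burns example the Green bundles need not be continuous under the no-conjugate-points hypothesis, so they are not known to be transverse anywhere, and almost expansiveness does not rescue this --- a null non-expansive set does not produce a single open flow box in leafwise coordinates. The paper's resolution is to abandon leafwise coordinates entirely and parametrize a neighborhood by the Hopf map $\tv\mapsto(\tv^-,\tv^+,b_{\tv^-}(\pi\tv,p))$ into $\sqbd\times\R$, building \emph{one} box $\tB=H^{-1}(\Pa\times\Fu\times[0,\alpha])$ around a single expansive vector $\tv_0$; expansiveness of $\tv_0$ is used only to make this box have small diameter (Lemma \ref{lem:inj0}), and the Margulis measure is replaced by the Patterson--Sullivan current $d\bar\mu=e^{h\beta_p}d\mu_p\,d\mu_p$ on $\sqbd$, whose scaling under deck transformations (not under the flow acting on leaves) gives $\tm(\tS^\gamma)\approx e^{-h|\gamma|}\tm(\tS)$.

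Second, because only one box is used, you need a mechanism to convert ``number of times closed geodesics of length in $(t-\epsilon,t]$ cross $\tB$'' into ``number of such free homotopy classes''; your proposal of a finite cover by boxes of nearly full measure, with local counts to be ``patched,'' is not carried out and is not how the argument closes. The missing ingredient is the equidistribution statement (Theorem \ref{thm:equidist}): the normalized length measures $\nu_t$ on orbits in $C(t)$ converge to $m$, which follows from uniqueness of the MME plus the lower counting bound, and which yields $\nu_t(\hB)\to m(\hB)$ and hence $\#C(t)\sim\frac{\epsilon}{t}\#\Gamma(t)/m(\hB)$. Related unexecuted points: the closing lemma in this setting requires the uniform visibility property to show that $\gamma$ with $\tS\cap\phi^{-t}\gamma_*\tB\neq\emptyset$ eventually satisfies $\gamma\Fu\subset\Fu$, $\gamma^{-1}\Pa\subset\Pa$ (so that Brouwer gives an axis through the box); the multiplicity of the map from periodic segments to conjugacy classes is not $1$ in general (M\"obius-strip phenomena) and must be controlled by a volume-growth estimate showing non-primitive classes contribute only $O(e^{2ht/3})$; and the final step is a Riemann sum over windows $(t-\epsilon,t]$ followed by $\epsilon\to0$ rather than a weighted count $\sum_{\ell(\gamma)\le t}\ell(\gamma)$ plus summation by parts --- the latter could work but you do not derive the weighted asymptotic either.
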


We proved in \cite{CKW} that the hypotheses of Theorem \ref{thm:main} are satisfies for every surface of genus at least 2 without conjugate points, so this result implies Theorem \ref{thm:surfaces}.
By results of Manning \cite{Man}, a background metric of negative curvature forces the geodesic flow of any other metric to have positive topological entropy, so $h>0$ in \eqref{eqn:margulis}.

In the course of the proof, we establish an equidistribution result for periodic orbits with lengths in $(t-\epsilon,t]$, which we believe to be of independent interest. The corresponding result for orbits with lengths in $(0,t]$ was proved in \cite{CKW}.

\begin{theorem}\label{thm:equidist}
Let $(M,g)$ be as in Theorem \ref{thm:main}, and fix any $\epsilon>0$.  Given $t>0$, let $C(t)$ be any maximal set of pairwise non-free-homotopic closed geodesics with lengths in $(t-\epsilon,t]$, and consider the measure
\begin{equation}\label{eqn:nut}
\nu_t = \frac 1{\# C(t)} \sum_{c\in C(t)} \frac{\Leb_c}{t},
\end{equation}
where $\Leb_c$ is Lebesgue measure (length) along the curve $\dot{c}$ in the unit tangent bundle $SM$. Then as $t\to\infty$, the measures $\nu_t$ converge in the weak* topology to the measure of maximal entropy.
\end{theorem}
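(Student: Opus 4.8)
The plan is to deduce the equidistribution for orbits with lengths in the "window" $(t-\epsilon,t]$ from the known equidistribution for the "cumulative" count on $(0,t]$ established in \cite{CKW}, by exploiting the precise asymptotics of $\#P(t)$. First I would write $P_w(t) := P(t) \setminus P(t-\epsilon)$ for the set of free homotopy classes whose shortest geodesic has length in the window, so that $C(t)$ is (up to choosing one representative per class) indexed by $P_w(t)$. The Margulis asymptotic $\#P(t) \sim e^{ht}/(ht)$ — or rather the version of it that the main argument actually produces before this theorem is invoked, together with the multiplicative lower/upper bounds $c^{-1} e^{ht}/t \le \#P(t) \le c\, e^{ht}/t$ available from the same circle of ideas — gives $\#P_w(t) = \#P(t) - \#P(t-\epsilon)$, and since $e^{h(t-\epsilon)}/(h(t-\epsilon)) \sim e^{-h\epsilon} e^{ht}/(ht)$, one gets
\begin{equation}\label{eqn:window-count}
\#P_w(t) \sim (1-e^{-h\epsilon}) \frac{e^{ht}}{ht}.
\end{equation}
This is the quantitative input that makes the window count comparable in size to the cumulative count.

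Next I would set up the comparison of measures. Let $\mu_{\le t}$ be the normalized cumulative measure, i.e.\ \eqref{eqn:nut} with $C(t)$ replaced by a maximal set of non-homotopic closed geodesics of length $\le t$ and with the weights $\Leb_c/t$ replaced by $\Leb_c/\ell(c)$ (or kept as $\Leb_c/t$ — the two differ negligibly once lengths are $\gtrsim (1-\epsilon) t$, which is where almost all of the mass sits by \eqref{eqn:window-count}); by \cite{CKW} we have $\mu_{\le t} \to \mu$, the unique MME. The key identity is that the un-normalized window measure is a difference: $\#P_w(t)\, \nu_t = \#P(t)\,\mu_{\le t} - \#P(t-\epsilon)\,\mu_{\le t-\epsilon}$ up to the length-normalization discrepancy just mentioned. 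Dividing by $\#P_w(t)$ and using \eqref{eqn:window-count} together with $\#P(t)/\#P_w(t) \to 1/(1-e^{-h\epsilon})$ and $\#P(t-\epsilon)/\#P_w(t) \to e^{-h\epsilon}/(1-e^{-h\epsilon})$, any weak* limit point $\nu$ of $\nu_t$ must satisfy, for every continuous $\varphi$ on $SM$,
\begin{equation}\label{eqn:limit-eq}
\int \varphi \, d\nu = \frac{1}{1-e^{-h\epsilon}}\int\varphi\,d\mu - \frac{e^{-h\epsilon}}{1-e^{-h\epsilon}}\int\varphi\,d\mu = \int\varphi\,d\mu,
\end{equation}
so $\nu=\mu$; since $SM$ is compact the space of probability measures is weak* sequentially compact, so $\nu_t\to\mu$.

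The main obstacle, and the step requiring the most care, is the bookkeeping that turns the set-difference of \emph{index sets} into a set-difference of \emph{measures}: one must check that the representatives chosen in the maximal sets for $t$, for $t-\epsilon$, and for the window $C(t)$ can be taken consistently (same representative geodesic for a class that appears in more than one), and that the passage from the weight $\Leb_c/\ell(c)$ to the weight $\Leb_c/t$ used in \eqref{eqn:nut} costs only a $1+o(1)$ factor — this is where one uses that $\ell(c)\in(t-\epsilon,t]$ forces $\ell(c)/t \to 1$ only after letting $t\to\infty$ with $\epsilon$ fixed, which is exactly the regime of the theorem, and that in the cumulative count the classes with $\ell(c)\le(1-\delta)t$ contribute a fraction $O(e^{-h\delta t}\cdot e^{ht}/\text{(stuff)})\to 0$ of the total mass, again by the Margulis asymptotics. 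A secondary point is that the argument as written only needs the cumulative equidistribution of \cite{CKW} plus the two-sided counting bounds, so I would state at the outset precisely which of those are being imported, and note that almost expansiveness and the other hypotheses of Theorem \ref{thm:main} enter only through those imported results, not afresh here.
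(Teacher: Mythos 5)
Your approach --- writing the window measure as a difference of two cumulative measures and cancelling --- is genuinely different from the paper's, but as set up it is circular. Write $\# P_w(t):=\# P(t)-\# P(t-\epsilon)$. The cancellation in your final display only works if the coefficients $\# P(t)/\# P_w(t)$ and $\# P(t-\epsilon)/\# P_w(t)$ stay bounded (their difference is automatically $1+o(1)$, so boundedness is all you truly need, but you do need that much: otherwise you are multiplying $\int\varphi\,d\mu_{\le t}-\int\varphi\,d\mu_{\le t-\epsilon}$, which is merely $o(1)$ with no rate, by an unbounded factor). Boundedness of these ratios requires an upper bound $\# P(t)\le B e^{ht}/t$ to go with the lower bound $\# P_w(t)\ge Ae^{ht}/t$. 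The lower bound is indeed available at this stage, from \eqref{eqn:lower-P} and \eqref{eqn:counting-bounds}. The upper bound is not: in the no-conjugate-points setting the literature only supplies $\# P(t)\le Be^{ht}$ (see the footnotes concerning \cite{K4} and \cite{CK} in \S\ref{sec:history}), which makes your coefficient $O(t)$, and the paper's own upper counting bound \eqref{eqn:upper-P} has $\nu_t(\hB^\epsilon)$ in the denominator, so it only becomes effective \emph{after} one knows $\nu_t(\hB^\epsilon)\to\hm(\hB^\epsilon)>0$ --- which is precisely Theorem \ref{thm:equidist}. Your stronger claimed input, the asymptotic $\# P_w(t)\sim(1-e^{-h\epsilon})e^{ht}/(ht)$, is even further downstream: in this paper the Margulis asymptotics are deduced \emph{from} Theorem \ref{thm:equidist} via \eqref{eqn:P-asymp} and the Riemann sums of \S\ref{sec:completion}, and the two-sided bounds \eqref{eqn:unif} alone do not yield any asymptotic for the difference $\# P(t)-\# P(t-\epsilon)$ (with an unspecified constant they do not even force it to be positive).

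The paper avoids all of this by proving equidistribution with no reference to upper counting bounds: one checks that $\{\dot{\hc}(0):\hc\in C(t)\}$ is $(t,\epsilon)$-separated (via Lemmas \ref{lem:Preissman} and \ref{lem:2-axes}), establishes $\liminf_{t\to\infty}\frac1t\log\# C(t)\ge h$ from the lower bound alone, and then invokes the standard fact (Lemma \ref{lem:mme} and Corollary \ref{cor:mme}) that empirical measures on separated sets realizing the topological entropy converge to the unique measure of maximal entropy. Your secondary points (consistent representatives, the $\Leb_c/\ell(c)$ versus $\Leb_c/t$ normalization) are correctly identified as harmless, but the main step stands on an input that, in this paper's architecture, is a consequence of the theorem being proved. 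To salvage your scheme you would need to establish $\# P(t)=O(e^{ht}/t)$ independently first, e.g.\ by carrying out the ``extra work'' alluded to for \cite{CK}.
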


The results in \cite{CKW}, and our arguments here, apply to a class of higher-dimensional manifolds as well; the following is a consequence of Theorems \ref{thm:main} and \ref{thm:equidist} together with \cite{CKW}.

\begin{theorem}\label{thm:extend-CKW}
Let $(M,g)$ be a closed Riemannian manifold without conjugate points that admits a background metric $g_0$ of negative curvature and satisfies the divergence property.  Suppose that the fundamental group $\pi_1(M)$ is residually finite and 
that the entropy gap condition in \S\ref{sec:geom-hyp} is satisfied.  Then \eqref{eqn:margulis} holds, as does the equidistribution result in Theorem \ref{thm:equidist}.
\end{theorem}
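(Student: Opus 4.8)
The plan is to deduce Theorem \ref{thm:extend-CKW} from Theorems \ref{thm:main} and \ref{thm:equidist} by verifying their hypotheses. Both of those results take as input a closed manifold without conjugate points that admits a negatively curved background metric and satisfies the divergence property, and in addition require: (i) existence of a \emph{unique} measure of maximal entropy (MME) for the geodesic flow on $SM$, and (ii) that this MME be almost expansive in the sense of Definition \ref{def:expansive}. So the entire content of the reduction is to show that, under residual finiteness of $\pi_1(M)$ and the entropy gap condition of \S\ref{sec:geom-hyp}, properties (i) and (ii) hold. This is precisely what is established in \cite{CKW}: the first step is therefore to cite the relevant theorem of \cite{CKW} stating that, in this higher-dimensional setting, the entropy gap condition implies uniqueness of the MME (via the Climenhaga--Thompson orbit-decomposition machinery applied to the geodesic flow in no-conjugate-points regularity) and also yields the almost expansivity of that measure.

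Next I would spell out the role of residual finiteness. The Margulis asymptotic \eqref{eqn:margulis} counts \emph{free homotopy classes}, not closed geodesics or periodic orbits directly; passing from a dynamical counting/equidistribution statement for periodic orbits to a statement about $\#P(t)$ requires controlling the multiplicity with which free homotopy classes are realized by geodesics of comparable length, and ruling out pathologies where exponentially many homotopically distinct geodesics share nearly the same orbit. Residual finiteness of $\pi_1(M)$ is the standard tool (already used in \cite{CKW} and going back to the structure of the proof in that setting) for separating free homotopy classes in finite covers, so I would invoke the corresponding lemma from \cite{CKW} to guarantee that the hypotheses phrased in Theorems \ref{thm:main} and \ref{thm:equidist} in terms of periodic orbits transfer to the free-homotopy-class counting function $\#P(t)$.

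With these ingredients assembled, the proof is then a direct application: Theorem \ref{thm:main} gives \eqref{eqn:margulis}, and Theorem \ref{thm:equidist} gives the equidistribution of the measures $\nu_t$ toward the unique MME, completing the statement. Concretely: assume $(M,g)$ is as in the hypotheses; by \cite{CKW} the entropy gap condition implies the geodesic flow on $SM$ has a unique MME which is almost expansive; residual finiteness of $\pi_1(M)$ together with the divergence property and the background negatively curved metric ensure the remaining structural hypotheses of \cite{CKW} used in Theorems \ref{thm:main} and \ref{thm:equidist}; hence both theorems apply and yield the two conclusions.

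The main obstacle I anticipate is not in the logical skeleton, which is short, but in making sure the hypothesis lists match exactly — in particular, that the ``entropy gap condition in \S\ref{sec:geom-hyp}'' is literally the hypothesis under which \cite{CKW} proves uniqueness and almost expansivity of the MME in the higher-dimensional no-conjugate-points setting, and that nothing extra (e.g.\ a bounded-geometry or asymptotic-geometry assumption, or a restriction on the dimension) is silently needed. A secondary point to check carefully is that the divergence property plus residual finiteness genuinely suffice to run the free-homotopy-to-periodic-orbit comparison at the level of generality claimed, rather than only for surfaces; this is where I would look most closely at the arguments of \cite{CKW} to confirm they were stated (or can be stated) for this class of higher-dimensional manifolds without modification.
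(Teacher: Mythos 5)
Your reduction is correct and is exactly the paper's (one-line) argument: the hypotheses of Theorem \ref{thm:extend-CKW} are precisely the four conditions of \S\ref{sec:geom-hyp} under which \cite{CKW} proves existence and uniqueness of the MME, the entropy gap condition forces that unique (ergodic, entropy $h>h_0$) measure to be almost expansive, and then Theorems \ref{thm:main} and \ref{thm:equidist} apply verbatim. One correction to your second paragraph: residual finiteness plays no role in passing from periodic orbits to free homotopy classes --- Theorems \ref{thm:main} and \ref{thm:equidist} are already stated in terms of free homotopy classes, and that bookkeeping is handled internally by the conjugacy-class correspondence and the multiplicity bounds of \S\ref{sec:per-G}. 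Rather, residual finiteness is simply one of the standing hypotheses of the uniqueness theorem in \cite{CKW} (it is not implied by the entropy gap condition alone), so your first paragraph slightly overstates what the entropy gap buys; since you list all the hypotheses anyway, the deduction still goes through.
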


\begin{remark}\label{rmk:problem}
To illustrate the difficulties involved in proving the above results, let us first remark that the original proof of \eqref{eqn:margulis} by Margulis uses the local product structure of the continuous stable and unstable foliations of an Anosov flow to construct a ``flow box''. In our setting, where the flow need not be Anosov, analogous stable and unstable bundles were constructed using Jacobi fields by Hopf \cite{Ho54} for surfaces and by Green \cite{Gr58} in any dimension, but the question of their product structure is more subtle. 
For surfaces (and more generally rank 1 manifolds) of non-positive curvature, the bundles are at least transverse in some open region; however, in our ``no conjugate points'' setting, even this much is not known. This transversality would follow if the bundles were continuous \cite{gK86}, as they are in non-positive curvature,
but an example provided by Ballmann, Brin, and Burns \cite{BBB} shows that continuity may fail under the assumption of no conjugate points. In particular, the natural construction of Margulis does not automatically produce any flow boxes with which to carry out the proof.
See \S\ref{sec:beyond-negative} and \S\ref{sec:outline} for an explanation of how this difficulty is overcome.
\end{remark}

\subsection{History}\label{sec:history}

Here we recall some of the history of \eqref{eqn:margulis} and weaker estimates, referring to the survey by Sharp \cite{Mar2} for a more detailed overview and for a discussion of the stronger asymptotic estimates that are available in certain settings.

\subsubsection{Types of estimates}

We are interested in manifolds for which $\#P(t)$ grows exponentially quickly. In this case one can identify three levels of asymptotic results, each stronger than the last.
\begin{itemize}
\item \emph{Exponential growth rate:} existence of the limit
\begin{equation}\label{eqn:h}
h = \lim_{t\to\infty} \frac 1t \log \#P(t), 
\end{equation}
and identification of $h$ with a dynamical or geometric quantity such as topological entropy or the growth rate of balls in the universal cover.
\item \emph{Uniform counting bounds:} existence of $A,B>0$ such that
\begin{equation}\label{eqn:unif}
\frac At e^{ht} \leq \#P(t) \leq \frac Bt e^{ht} \text{ for sufficiently large }t.
\end{equation}
\item \emph{Multiplicative asymptotics:} the result in \eqref{eqn:margulis}.
\end{itemize}
Broadly speaking, there are two types of approaches to proving these estimates. One approach relies on studying the dynamical properties of the geodesic flow on $SM$, with an important role played by the measure of maximal entropy. The other approach relies on methods from geometry, analysis, and geometric group theory, and leans heavily on the fundamental group $\pi_1(M)$, its isometric action on the universal cover of $M$ as the group $\Gamma$ of deck transformations,
and the correspondence between free homotopy classes of curves on $M$ and conjugacy classes in $\Gamma \cong \pi_1(M)$.\footnote{This correspondence means that one can obtain estimates on $\#P(t)$ by counting conjugacy classes. It is worth pointing out that the question of growth of conjugacy classes is distinct from the question of growth of group elements; there are groups of exponential growth with only two conjugacy classes \cite[Corollary 1.2]{dO10}.}
As will become clear later, our proof combines elements of both approaches.

\subsubsection{Negative curvature and uniform hyperbolicity}

When all sectional curvatures of $(M,g)$ are negative, each free homotopy class is represented by a unique closed geodesic, so $\#P(t)$ represents the number of closed geodesics with length at most $t$. 
In the 1950's Huber \cite[Satz 10]{Hu} proved for closed surfaces of constant curvature $-1$ that \eqref{eqn:margulis} holds with $h=1$. He used a relation between the spectrum of the Laplacian and the length spectrum (the set of lengths of all closed geodesics) 
which in its general form is given by Selberg's trace formula \cite{aS56} and involves a sum over conjugacy classes in $\Gamma$. 
Proofs using Selberg's methods can be found in \cite{hM72,dH76,pB92}.\footnote{In this context \eqref{eqn:margulis} is often called the prime number theorem for compact Riemann surfaces: note that the classical prime number theorem says that if $\pi(x)$ is the number of primes less than or equal to $x$, then $\pi(e^t) \sim \frac{e^t}t$.}
This formula extends to locally symmetric spaces of negative curvature in arbitrary dimension, where it was later used to deduce \eqref{eqn:margulis} by Gangolli \cite{rG77}.

To deal with variable curvature, where there is no Selberg trace formula, different arguments are needed. In 1966, Sinai \cite{Si} used dynamical methods to prove that if $(M,g)$ has dimension $n$ and all of its sectional curvatures lie between $-b^2$ and $-a^2$ for some $b\geq a>0$, then the lower and upper limits in \eqref{eqn:h} lie between $(n-1)a$ and $(n-1)b$;
however, he did not prove existence of the limit.

A striking improvement of Sinai's bounds was given by Margulis in his 1969 thesis, which established \eqref{eqn:margulis} -- and hence \eqref{eqn:h} and \eqref{eqn:unif} as well -- for every $(M,g)$ with (variable) negative sectional curvatures. Margulis's arguments are dynamical and apply to any topologically mixing Anosov flow, where in this dynamical setting $P(t)$ denotes the set of periodic orbits with period at most $t$.\footnote{This yields the geometric conclusion because the geodesic flow on $SM$ is Anosov when $M$ has negative curvature.} 

At the heart of Margulis's proof is the construction of an invariant mixing measure $m$ with the following properties:
\begin{enumerate}
\item the conditional measures of $m$ on stable and unstable manifolds contract and expand with a uniform rate under the flow;
\item $m$ is a measure of maximal entropy for the flow;
\item $m$ is the limiting distribution of periodic orbits in the sense of Theorem \ref{thm:equidist}.
\end{enumerate}
Margulis's results were announced without proof in \cite{Mar1}, in which the right-hand side of \eqref{eqn:margulis} had the form $c t^{-1} e^{ht}$ for some unspecified constant $c$, which was stated to be equal to the entropy $h$ in the constant curvature case. The stronger result that $c=h$ in general was contained in his thesis itself, which was not published until 2004 \cite{Mar2}. In the meantime, 
this equality was established in the 1984 thesis of Toll \cite{cT84}, which gave an account of Margulis's ideas in English (albeit an unpublished one that only circulated informally); his description of Margulis's approach was also reproduced in the 1995 book of Katok and Hasselblatt \cite[\S20.5--6]{KH}.

After Margulis's work was announced but before his arguments were widely known, several important alternate dynamical approaches were developed. First came the work of Bowen in the early 1970's, which used the specification and expansivity properties satisfied by Anosov and Axiom A flows to prove \eqref{eqn:h} and \eqref{eqn:unif} in \cite{rB72-hyp} and \cite[Theorem 4.1]{rB72-geod}, respectively, and to show that periodic orbits are equidistributed with respect to an ergodic measure of maximal entropy.  In \cite{Bo} Bowen proved that this measure of maximal entropy is in fact unique, and it is now referred to as the Bowen--Margulis measure. In 1983, Parry and Pollicott \cite{PaPo} gave an alternative proof of \eqref{eqn:margulis} based on symbolic dynamics and zeta functions, which covers Axiom A flows as well.

In 1990, Kaimanovich \cite{Kai} showed that on a manifold with negative curvature, the Bowen--Margulis measure can also be obtained by building a family of measures on the boundary at infinity using a construction due to Patterson \cite{Pat} and Sullivan \cite{Su}.
These ideas were used by Roblin in \cite{tR} to give another proof of \eqref{eqn:margulis}, which 
even works for certain non-compact and negatively curved Alexandrov metric spaces was provided by Roblin in \cite{tR}. See \cite{PPS15} for a comprehensive treatment of this approach.


\subsubsection{Beyond negative curvature}\label{sec:beyond-negative}

Outside of the uniformly hyperbolic setting, there are fewer comprehensive results available. In 1982 Katok used a closing lemma in Pesin theory to show that for a smooth flow with topological entropy $h>0$ and no fixed points, the exponential growth rate of the number of periodic orbits is at least $h$ \cite[Theorem 4.1]{Kat}, yielding one inequality in \eqref{eqn:h}. Recently, Lima and Sarig strengthened this to the lower uniform bound in \eqref{eqn:unif} \cite[Theorem 8.1]{LS19} when the flow is on a manifold of dimension $3$ (for geodesic flows, this requires $M$ to be a surface so that $\dim SM = 3$). 

On the other hand, without the assumption of uniform hyperbolicity, it is easy to produce flows with uncountably many periodic orbits below a given length, which suggests that we should really be counting equivalence classes of periodic orbits, rather than individual orbits. In the most general setting it is not clear what equivalence relation would be appropriate, but for geodesic flow on $SM$ it is natural to consider free homotopy of the underlying geodesics on $M$, and from now on we restrict our attention to this setting. 

Now that $\#P(t)$ counts free homotopy classes of closed curves, the upper bound in \eqref{eqn:h} holds without any further assumptions on $M$ \cite[Satz 2.1]{K1}: growth of homotopy classes creates entropy.  However, the lower bound in \eqref{eqn:h} no longer holds in general when we are counting homotopy classes instead of individual geodesics, as evidenced by the fact that the sphere admits metrics for which the geodesic flow has positive topological entropy \cite{vD88,BG89}. Thus some assumptions must be placed on $M$ to proceed further.

In 1983, the second author studied closed Riemannian manifolds with nonpositive curvature which admit a geodesic that does not bound a flat half plane \cite{K1} (in particular, this includes manifolds of geometric rank $1$) and proved the exponential growth rate \eqref{eqn:h}, where $h$ is the topological entropy. This used geometric methods, and in particular the result of Manning \cite{Man} that $h$ gives the growth rate of the volume of balls in the universal cover.

In the rank $1$ nonpositive curvature setting, the second author \cite{K4,K2} adapted the ``boundary at infinity'' approach to obtain the uniform bounds \eqref{eqn:unif}.\footnote{The upper bound in \cite{K4} does not have the $t$ in the denominator, but \cite[Proposition 5.3.6]{K2} explains how to obtain it.}
In \cite{K5}, he showed that the measure of maximal entropy constructed from the Patterson--Sullivan measure is unique, and Babillot used the product structure of this measure to prove the mixing property \cite{Bab}.
We remark that in this approach, this product structure is described in terms of the boundary at infinity rather than the stable and unstable bundles, an idea which also plays a central role in resolving the difficulties described in Remark \ref{rmk:problem}.

\begin{remark}
An alternate proof of uniqueness in rank $1$ based on a variant of Bowen's specification property was recently given by the first author together with Burns, Fisher, and Thompson \cite{BCFT}.  As with Bowen's original work, this approach can be used to deduce the uniform estimates \eqref{eqn:unif}, although this is not done explicitly in the paper,\footnote{See \cite[Corollary 4.8 and Proposition 6.4]{BCFT} for results toward \eqref{eqn:unif}.} and these techniques do not lead to the stronger Margulis estimates \eqref{eqn:margulis}.
\end{remark}

Beyond the Riemannian setting, Coornaert \cite{mC93} studied groups acting on Gromov hyperbolic metric spaces, and constructed Patterson--Sullivan measures on the Gromov boundary. Coornaert and the second author \cite{CK} used this together with ideas from \cite{K1,K4} to prove that if a group $\Gamma$ acts properly and cocompactly by isometries on a proper geodesic Gromov hyperbolic metric space whose Gromov boundary contains at least $3$ points, then \eqref{eqn:unif} holds for the set $P(t)$ of conjugacy classes $[\gamma]$ of elements $ \gamma \in \Gamma$ such that $\inf_{x \in X} d(x, \gamma x) \le t$.\footnote{As with \cite{K4}, the upper bound in \cite{CK} has no $t$ in the denominator, but it could be added with some extra work.}

None of these results, however, establish the Margulis asymptotics \eqref{eqn:margulis} in any setting beyond negative curvature.\footnote{For rank $1$ manifolds, the asymptotics were announced in the thesis of Gunesch \cite{rG02}, but a complete proof was never published.}
For surfaces that have negative curvature outside of a collection of radially symmetric `caps', \eqref{eqn:margulis} was established by Weaver \cite{bW14}, following Margulis's original argument with suitable modifications. More recently, in an unpublished preprint \cite{Ri},
Ricks has announced the result in the rank 1 CAT(0) setting, which includes the nonpositive curvature case; he follows Margulis's approach, using geometric tools as in \cite{K4,K5} to compensate for the lack of uniform hyperbolicity. As we describe below, we take a similar approach in this paper, although we make a number of substantial changes that allow us to sidestep locations where we were unable to follow the argument in \cite{Ri}.


\subsection{Outline of the proof}\label{sec:outline}

The proof of Theorem \ref{thm:main} uses ergodic theoretic properties of the measure of maximal entropy $m$ for the geodesic flow $\phi^t$ on $SM$. 
Beyond the basic geometric properties that we review in \S\ref{sec:uniq} (and prove in Appendix \ref{sec:geom-pf}), the key ingredients are as follows.\footnote{Some of the notation here is simplified from the formal proofs, where we take greater care to indicate the dependence on various parameters, and to identify whether objects are defined in terms of $M$ itself or its universal cover.}
\begin{enumerate}[itemsep=1ex]
\item \emph{Flow box and slice (\S\ref{sec:box}):} small sets $B,S \subset SM$ with a product structure of (stable) $\times$ (unstable) $\times$ (flow), where $B$ and $S$ have the same cross-section in the first two components and have flow-lengths of $\epsilon$ and $\epsilon^2$, respectively.\footnote{Here we differ from prior approaches, which only used $B$.}
\item \emph{Closing lemma (\S\ref{sec:per-G}):} 
a nearly bijective correspondence between
\begin{itemize}
\item the set of $\epsilon$-segments in which some $c\in C(t)$ intersects $B$, where $C(t)$ is as in Theorem \ref{thm:equidist}, and
\item the set of components of $S\cap \phi^{-t} B$, indexed by a subset $\Gamma(t)$ of the fundamental group.
\end{itemize}
In particular, the measures $\nu_t$ from \eqref{eqn:nut} have
$\nu_t(B) \approx \frac \epsilon t \frac{\#\Gamma(t)}{\# C(t)}$.
\item \emph{Product structure and scaling properties (\S\S\ref{sec:PS} and \ref{sec:scaling}):} nearly every component of $S\cap \phi^{-t} B$ has measure $\approx e^{-ht} m(S)$, and thus $m(S\cap \phi^{-t} B) \approx \#\Gamma(t) e^{-ht} m(S)$;
\item \emph{Mixing property for $m$ (\S\ref{sec:asymp-gam}):} $m(S\cap \phi^{-t} B) \to m(S)m(B)$ as $t\to\infty$, so $\#\Gamma(t) \sim e^{ht} m(B)$.
\item \emph{Equidistribution (\S\ref{sec:equidist}):} $\nu_t\to m$ as in Theorem \ref{thm:equidist}, giving the asymptotic
$\# C(t) \sim \frac{\epsilon}t \frac{\# \Gamma(t)}{m(B)}
\sim \epsilon \cdot e^{ht}/t$.
\item \emph{Integration (\S\ref{sec:completion}):} Fixing $b>0$ and partitioning $(b,T]$ into $\epsilon$-intervals with endpoints $t_k$ gives a Riemann sum
\[
\# P(T) \sim \#(P(T) \setminus P(b)) \sim \sum_k \epsilon \cdot \frac{e^{ht_k}}{ t_k} \xrightarrow{\epsilon\to0}\int_b^T \frac 1t e^{ht}\,dt \sim \frac 1{hT} e^{hT}.
\]
\end{enumerate}

These are the same basic ingredients as in Margulis's original proof, with one notable exception: he works with components of $B \cap \phi^{-t} B$, which may have varying `depths' between $0$ and $\epsilon$, while we work with $S\cap \phi^{-t} B$ and can disregard components that do not have full depth $\epsilon^2$. 
(This also makes our use the mixing property in \S\ref{sec:mixing} somewhat different from prior approaches.) 
It should also be pointed out that because we do not have uniform hyperbolicity, we follow \cite{K4,Ri} and work with the boundary at infinity instead of stable and unstable manifolds: this shows up in the definition of the flow box and slice, in the closing lemma, and in the product structure and scaling properties. Moreover, we carry out many of the counting steps in terms of the fundamental group $\pi_1(M)$.

\section{Definitions and geometric preliminaries}\label{sec:uniq}

\subsection{Closed curves}\label{sec:closed-curves}

Throughout the paper, $(M,g)$ will be a closed Riemannian manifold.
A \emph{(smooth) closed curve} on $M$ is a smooth map $c \colon \R/\Z = S^1 \to M$. Two such curves $c_0,c_1 \colon \R/\Z = S^1\to M$ are \emph{freely homotopic} if there exists a continuous map $H\colon [0,1] \times \R/\Z \to M$ such that $H(0,t) = c_0(t)$ and $H(1,t) = c_1(t)$ for all $t$. This is an equivalence relation on the set of closed curves, whose equivalence classes we call \emph{free homotopy classes}.

By Cartan's Theorem \cite[Theorem 12.2.2]{dC92}, every free homotopy class of closed curves contains at least one closed geodesic, which minimizes length within the class. Thus \eqref{eqn:margulis} can be interpreted either as a result on the growth rate of free homotopy classes of closed geodesics, or of free homotopy classes of closed curves, since the two are equivalent. From now on we will work only with closed geodesics. 

Since it is customary to parametrize geodesics with unit speed, we will write a closed geodesic as $c\colon \R/\ell \Z \to M$, where $\ell>0$ is the \emph{length} of the closed geodesic. For manifolds without conjugate points (defined below), any two closed geodesics in the same free homotopy class have the same length; see \cite[Lemma 2.29]{CKW} for the (short) proof.

Given a closed geodesic $c$ and $n\in \N$, one can consider the closed geodesic $c^n \colon \R/n\ell\Z \to M$ that consists of $n$ orbits around $c$. A closed geodesic is \emph{primitive} if it cannot be written as $c^n$ for some closed geodesic $c$ and $n\geq 2$.  One can show that \eqref{eqn:margulis} is unaffected by whether we count all closed geodesics, or only primitive ones (see Remark \ref{rmk:apathy}). 

As we will see in \S\ref{sec:|gamma|} below, there is a one-to-one correspondence between free homotopy classes of closed geodesics and conjugacy classes in the fundamental group. A conjugacy class is primitive if its elements cannot be written as proper powers. Note that it is possible to have a primitive closed geodesic whose free homotopy class corresponds to a non-primitive conjugacy class in the fundamental group: the boundary circle of a M\"obius strip provides an example.

\subsection{Geodesic flows and standing assumptions}

Let $SM$ be the unit tangent bundle of $M$, and $\pi\colon SM \to M$ the canonical projection that takes each tangent vector to its basepoint.  Each $v\in SM$ determines a unique geodesic $c_v\colon \R \to M$ with $c_v(0) = \pi(v)$ and $\dot{c}_v(0) = v$.  The \emph{geodesic flow} on $M$ is the flow $\phi^t$ on $SM$ given by
\begin{equation}\label{eqn:geodesic-flow}
\phi^t(v) = \dot{c}_v(t).
\end{equation}
Dynamical properties of the geodesic flow are related to curvature properties of the manifold, and in particular to the behavior of Jacobi fields, which govern how nearby geodesics evolve in time.  Formally, a \emph{Jacobi field} along a geodesic $c\colon \R\to M$ is a vector field $J(t)$ along $c$ satisfying $J'' + R(J,\dot c) \dot c = 0$, where $R$ is the Riemann curvature tensor and $'$ is covariant differentiation along $c$.  Two points $p,q$ on $c$ are \emph{conjugate} if there is a nonzero Jacobi field along $c$ that vanishes at both $p$ and $q$.  

Throughout the paper, we consider manifolds without conjugate points.  This class includes all manifolds with nonpositive curvature, as well as many manifolds with positively curved regions \cite{rG75}.  It is also characterized by the property that the exponential map is nonsingular, or equivalently, that any two points in the universal cover are connected by a unique geodesic.

We impose two more standing assumptions:\footnote{Note that these assumptions rule out some higher-dimensional manifolds with nonpositive curvature.} we assume that $M$ admits another Riemannian metric $g_0$ for which all sectional curvatures are negative, and that $M$ satisfies the divergence property in Definition \ref{def:div} below. In the following sections we gather various definitions and results about such manifolds; some of the lemmas are quoted directly from the literature; the remainder are proved in Appendix \ref{sec:geom-pf}.

\subsection{Visibility manifolds and the boundary at infinity}

Write \(X\) for the universal cover of $M$ and \(SX\) for the unit tangent bundle of \(X\) with respect to the lift of the Riemannian metric \(g\).  Let $\pr\colon X\to M$ be the canonical projection; we write $\pr_* \colon SX\to SM$ for the map this induces on the unit tangent bundles.
From now on we will use an underline to denote objects in $M$ and $SM$, so that for example $\hc$ will denote a geodesic on $M$, and $\tc$ a geodesic on $X$ that lifts $\hc$, that is, $\hc = {\pr}\circ\tc$. We write $\phi^t$ for the geodesic flow on both $SM$ and $SX$.

Following \cite{Eb1}, we can define the \emph{boundary at infinity} $\partial X$ provided the following condition is satisfied.

\begin{definition}[{\cite[Definition 1.3]{Eb1}}]\label{def:vis}
A simply connected Riemannian manifold $X$ without conjugate points 
is a \emph{uniform visibility manifold} if for every $\epsilon>0$ there exists $L>0$ such that whenever a geodesic $\tc\colon [a, b] \to X$ stays at distance at least $L$ from some point $p\in X$, then the angle sustained by $\tc$ at $p$ is less than $\epsilon$, that is
  \begin{equation*}
    \measuredangle_p(\tc)=\sup_{a\leq s,t\leq b} \measuredangle_p(\tc(s),\tc(t))<\epsilon.
  \end{equation*}
If $M$ is a Riemannian manifold without conjugate points whose universal cover $X$ is a uniform visibility manifold, then we say that $M$ is a uniform visibility manifold.
\end{definition}

The uniform visibility property implies the following \emph{divergence property} \cite[Proposition 1.5]{Eb1}, and when $M$ admits a negatively curved ``background'' metric $g_0$, the two conditions are equivalent \cite[Theorem 5.1]{Eb1}.\footnote{The divergence property is not mentioned in the statement of \cite[Theorem 5.1]{Eb1}, but the ``Added in proof'' section of that paper discusses its necessity.}

\begin{definition}\label{def:div}
The manifold $(M,g)$ has the \emph{divergence property} if given any geodesics $\tc_1 \neq \tc_2$ in the universal cover with $\tc_1(0) = \tc_2(0)$, we have $\lim_{t\to\infty} d(\tc_1(t),\tc_2(t)) = \infty$.
\end{definition}

For the remainder of the paper we assume that $(M,g)$ is has no conjugate points, admits a background metric of negative curvature, and satisfies the divergence property (and thus the uniform visbility property as well).

\begin{remark}
In dimension 2, every surface of genus at least 2 without conjugate points admits a negatively curved metric, and the divergence property holds by \cite{Gr}.
In higher dimensions the corresponding result is no longer true: there are rank 1 manifolds of nonpositive curvature, such as the graph manifolds described by Gromov \cite[\S6]{K5}, that admit totally geodesic isometric embeddings of $\R^2$ and thus fail the uniform visibility condition.
\end{remark}

We describe a compactification of $X$ following Eberlein \cite{Eb1}.  

\begin{definition}
Two geodesic rays  $\tc_1, \tc_2\colon [0,\infty) \to X$ are called \emph{asymptotic}
if $ \sup_{t\geq 0} d(\tc_1(t), \tc_2(t)) < \infty$.  This is an equivalence relation; we write  $ \partial X$ for the set of equivalence classes and call its elements \emph{points at infinity}. We denote the equivalence class of a geodesic ray (or geodesic) $\tc$  by $\tc(\infty)$.
\end{definition}

Given $\tv\in SX$, let $\tc_{\tv}$ be the unique geodesic with $\dot{\tc}_{\tv}(0)=\tv$.  The following is proved in \cite[Propositions 1.5 and 1.14]{Eb1}.

\begin{lemma}\label{lem:p-to-xi}
Given any $p\in X$ and $\xi\in \partial X$, there is a unique geodesic ray $\tc=\tc_{p,\xi}\colon [0,\infty)\to X$ with $\tc(0) = p$ and $\tc(\infty) = \xi$.
Equivalently, the map  $f_p\colon S_pX \to \partial X$ defined by $f_p(\tv) = \tc_{\tv}(\infty)$ is a bijection. Moreover, for a given $\xi$, the initial vector $\dot{\tc}_{p,\xi}(0)$ depends continuously on $p$.
\end{lemma}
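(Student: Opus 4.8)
The goal is to prove Lemma~\ref{lem:p-to-xi}, which asserts existence and uniqueness of a geodesic ray from $p$ to $\xi$, bijectivity of $f_p \colon S_pX \to \partial X$, and continuity of $p \mapsto \dot{\tc}_{p,\xi}(0)$. Since the statement is explicitly attributed to \cite[Propositions 1.5 and 1.14]{Eb1}, the cleanest route is to cite Eberlein directly; but let me sketch how the proof actually goes, using only the uniform visibility hypothesis and the standing assumptions (no conjugate points, negatively curved background metric, divergence property).

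\textbf{Plan.} The plan is to exploit the visibility property to control the geometry of geodesic triangles with one vertex at infinity. First I would establish \emph{existence}: fix $p \in X$ and $\xi \in \partial X$, pick a representative ray $\sigma$ with $\sigma(\infty) = \xi$, and for each $n$ let $\tc_n$ be the unique geodesic segment from $p$ to $\sigma(n)$ (unique because $X$ has no conjugate points, so any two points are joined by a unique geodesic). Let $\tv_n = \dot{\tc}_n(0) \in S_pX$; by compactness of $S_pX$ pass to a subsequence $\tv_n \to \tv$, and set $\tc = \tc_{\tv}$. One then checks $\tc$ is a ray with $\tc(\infty) = \xi$: the key point is that $d(\tc(t), \sigma)$ stays bounded, which follows because the visibility property forces the angle at any far-away point subtended by the long thin triangle $p\,\sigma(n)\,\tc(t)$ to be small, and combined with the triangle inequality this bounds the distance from $\tc(t)$ to the ray $\sigma$ uniformly in $n$, hence (taking $n\to\infty$) to $\xi$'s representative.

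Next, \emph{uniqueness} of the ray from $p$ to $\xi$: if $\tc_1, \tc_2$ are two rays from $p$ asymptotic to $\xi$ with $\tc_1 \ne \tc_2$, then $\tc_1(0) = \tc_2(0) = p$, so by the divergence property (Definition~\ref{def:div}) $d(\tc_1(t), \tc_2(t)) \to \infty$; but two rays asymptotic to the same point at infinity must stay within bounded distance of each other (transitivity of the asymptoty relation plus the definition), a contradiction. This simultaneously shows $f_p$ is injective. Surjectivity of $f_p$ is exactly the existence statement just proved, and injectivity is the uniqueness statement, so $f_p$ is a bijection.

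\textbf{Continuity.} For the last claim --- that for fixed $\xi$ the vector $\dot{\tc}_{p,\xi}(0)$ depends continuously on $p$ --- I would argue by a normal families / compactness argument. Suppose $p_k \to p$; the vectors $w_k := \dot{\tc}_{p_k,\xi}(0) \in S_{p_k}X$ lie in a compact neighborhood in $SX$, so any subsequential limit $w$ is a unit vector at $p$. The geodesics $\tc_{w_k}$ converge uniformly on compact sets to $\tc_w$ (geodesics depend continuously on initial conditions), and one must check $\tc_w(\infty) = \xi$; this again uses the visibility estimate to see that the limit ray stays bounded distance from a fixed representative of $\xi$, because each $\tc_{w_k}$ does, with a bound that can be made uniform in $k$ (here one uses that $p_k$ ranges over a bounded set, and the visibility constant $L = L(\epsilon)$ is global). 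Then by the uniqueness already established $\tc_w = \tc_{p,\xi}$, so $w = \dot{\tc}_{p,\xi}(0)$; since every subsequential limit equals this single value and the $w_k$ live in a compact set, $w_k \to \dot{\tc}_{p,\xi}(0)$.

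\textbf{Main obstacle.} The delicate step is the use of the uniform visibility property to pin down that limiting geodesics actually have the prescribed endpoint at infinity --- i.e.\ converting ``small angle at far points'' into ``bounded distance to the reference ray''. In the existence argument this is where the hypothesis genuinely enters (in a general no-conjugate-points manifold $\partial X$ need not even behave well); in the continuity argument one additionally needs the visibility estimate to be uniform over the compact set of basepoints, which is automatic since the constant in Definition~\ref{def:vis} depends only on $\epsilon$. The divergence property does the rest of the work, supplying uniqueness cheaply. Given that all of this is carried out in \cite{Eb1}, in the paper it suffices to quote \cite[Propositions 1.5 and 1.14]{Eb1} as stated.
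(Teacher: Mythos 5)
Your proposal is correct and takes the same route as the paper, which offers no proof of this lemma beyond the citation to \cite[Propositions 1.5 and 1.14]{Eb1} --- exactly the conclusion you reach. Your sketch (existence via subsequential limits of the segments from $p$ to $\sigma(n)$ controlled by the thin-triangle consequence of uniform visibility, uniqueness/injectivity from the divergence property, and continuity by a compactness-plus-uniqueness argument) is a faithful outline of Eberlein's argument.
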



Following Eberlein we equip $\partial X$ with a topology that makes it a compact metric space homeomorphic to $S^{n-1}$.  Fix $p\in X$ and let $f_p\colon S_pX  \to \partial X$
 be the bijection $\tv \mapsto \tc_{\tv}(\infty)$ from Lemma \ref{lem:p-to-xi}. The topology (sphere-topology) on $\partial X$ is defined such that $f_p$
becomes a homeomorphism.
Since the map $f_q^{-1} f_p \colon S_p X \to S_q X$ is a homeomorphism for all $q\in X$ \cite{Eb1}, the topology is independent of the reference point $p$.

The topologies on $\partial X$ and $X$
extend naturally
 to  $\bar X: =  X \cup \partial X$
by requiring that the map
$\varphi\colon B_1(p) = \{\tv \in T_p X:   \|\tv\| \le 1\} \to \bar X$
defined by
\[
\varphi(\tv) = \begin{cases}
 \exp_p\left(\frac{\tv}{1-\|\tv\|}\right) & \|\tv\| < 1\\
f_p(\tv) & \|\tv\| = 1
\end{cases}
\]
is a homeomorphism. This topology, called the cone topology, was introduced by Eberlein
and O'Neill \cite{EO} in the case of Hadamard manifolds and by Eberlein   \cite{Eb1} in the case of visibility manifolds. In
particular, $\bar X$ is homeomorphic to a closed ball in
$\mathbb{R}^n$.
The relative topology on $\partial X$ coincides with the sphere topology, and the relative topology on $X$ coincides with the manifold topology.

\begin{remark}\label{rem:minmal}
The manifold $M$ is the quotient of $X$ by the isometric action of the group $\Gamma$ of deck transformations, which is isomorphic to the fundamental group $\pi_1(M)$.  
This extends to a continuous action on $\partial X$.
The geodesic flow is topologically transitive \cite{Eb1}, so every $\Gamma$-orbit in $\partial X$ is dense, i.e., the action on  $X$ is minimal.
Moreover, the Morse lemma (see for example \cite[Theorem 2.3]{K2})
gives a topological conjugacy between the actions of $\Gamma$ on the boundaries at infinity with respect to the two metrics $g$ and $g_0$; in particular, every element $\gamma\in \Gamma \setminus \id$ has exactly two fixed points on $\ideal$, since this is true for $g_0$.
\end{remark}

\begin{definition}\label{def:gamma-pm}
Given $\gamma\in\Gamma\setminus\id$, let $\xi_\gamma^-\in \ideal$ be the repelling fixed point of $\gamma$, and $\xi_\gamma^+ \in \ideal$ be the attracting fixed point of $\gamma$.
\end{definition}

Every geodesic $\tc\colon \R \to X$ determines two distinct points $\tc(-\infty)$ and $\tc(\infty)$ on $\ideal$.  We will also use the notation
\begin{equation}\label{eqn:vpm}
\tv^- := \tc_{\tv}(-\infty)
\quad\text{and}\quad
\tv^+ := \tc_{\tv}(\infty).
\end{equation}
It is useful to write
\begin{equation}\label{eqn:square}
\sqbd := \{ (\xi,\eta) \in (\ideal)^2 : \xi \neq \eta\}
\end{equation}
and to consider the \emph{endpoint map}
\begin{equation}\label{eqn:endpoints}
\begin{aligned}
E \colon SX &\to \sqbd, \qquad
\tv &\mapsto 
(\tv^-,\tv^+).
\end{aligned}
\end{equation}
By \cite[Proposition 1.7]{Eb1}, every pair of distinct points on $\ideal$ is joined by at least one geodesic, so the map $E$ is onto.  Note that $E$ is continuous by the definition of the topology on $\ideal$.

It is sometimes important to work with pairs of points at infinity that are joined by \emph{exactly} one geodesic.

\begin{definition}\label{def:expansive}
The \emph{expansive set} of $X$ is
\begin{equation}\label{eqn:expansive}
\cal E := \{ \tv \in SX \mid \phi^\R (\tv) = E^{-1}(\tv^-,\tv^+) \}.
\end{equation}
We say that an invariant Borel probability measure $\mu$ on $SM$ is \emph{almost expansive} if $\mu(\pr_*\cal{E})=1$.
\end{definition}

\subsection{Periodic orbits and deck transformations}\label{sec:|gamma|}

There is a one-to-one correspondence between closed geodesics on $M$ and conjugacy classes in the group $\Gamma$ of deck transformations. Here we recall some of the elements of this correspondence that we will need.

Given a closed geodesic $\hc\colon \R/\ell \Z\to M$, the length $\ell>0$ is such that $\hc(t+\ell) = \hc(t)$ for all $t\in\R$, and thus for every lift $\tc$ of $\hc$ there is a unique $\gamma\in\Gamma$ such that 
\begin{equation}\label{eqn:axis}
\tc(t+\ell) = \gamma \tc(t) \text{ for all }t\in\R.
\end{equation}

\begin{definition}
If $\tc$, $\gamma$, and $\ell>0$ are such that \eqref{eqn:axis} holds, then we say that $\gamma$ is the \emph{axial isometry} of $(\tc,\ell)$, and $\tc$ is an \emph{axis} of $\gamma$.
\end{definition}

If $\tc$ is an axis of some $\gamma$, then it follows immediately that $\hc = {\pr}\circ\tc$ is a closed geodesic on $M$. Moreover, every axial isometry $\gamma$ of $\tc$ (with any value of $\ell$) fixes $\tc(\pm\infty)$, and by Remark \ref{rem:minmal} and Definition \ref{def:gamma-pm} this proves the following lemma.

\begin{lemma}\label{lem:endfix}
If $\tc$ is an axis of $\gamma$, then
$\tc(-\infty) = \xi_\gamma^-$ and $\tc(\infty) = \xi_\gamma^+$.
\end{lemma}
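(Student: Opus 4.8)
The plan is first to verify that the axial isometry $\gamma$ fixes both boundary points $\tc(\infty)$ and $\tc(-\infty)$, then to combine this with the fact (recorded in Remark \ref{rem:minmal}) that an element of $\Gamma \setminus \{\id\}$ has exactly two fixed points on $\ideal$ to conclude $\{\tc(-\infty),\tc(\infty)\} = \{\xi_\gamma^-,\xi_\gamma^+\}$, and finally to pin down which endpoint is attracting and which is repelling.

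For the first step: since $\tc$ is an axis of $\gamma$, \eqref{eqn:axis} gives $\gamma \tc(t) = \tc(t+\ell)$, so the curve $t \mapsto \gamma\tc(t)$ is just $\tc$ precomposed with the shift by $\ell$. Hence $\gamma\tc(t) = \tc(t+\ell) \to \tc(\infty)$ in the cone topology as $t\to\infty$. On the other hand the isometric $\Gamma$-action on $X$ extends to a homeomorphism of $\bar X = X \cup \ideal$ with the cone topology (Remark \ref{rem:minmal}), so $\gamma\tc(t) \to \gamma(\tc(\infty))$; comparing the two limits yields $\gamma(\tc(\infty)) = \tc(\infty)$. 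Applying the same reasoning to $t \mapsto \tc(-t)$ (using $\gamma\tc(-t) = \tc(-t+\ell) \to \tc(-\infty)$) gives $\gamma(\tc(-\infty)) = \tc(-\infty)$. Since $\tc(-\infty) \ne \tc(\infty)$ (every geodesic determines two distinct points at infinity) and these are the only two fixed points of $\gamma$ on $\ideal$, Definition \ref{def:gamma-pm} gives $\{\tc(-\infty),\tc(\infty)\} = \{\xi_\gamma^-,\xi_\gamma^+\}$.

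It remains to decide the matching, and this is the only step that requires more than bookkeeping. Iterating \eqref{eqn:axis} gives $\gamma^n \tc(0) = \tc(n\ell)$, which runs off to $\tc(\infty)$. To see that $\tc(\infty)$ is the attracting fixed point $\xi_\gamma^+$ rather than the repelling one, I would compare $\tc$ with the axis $\tc_0$ of $\gamma$ for the negatively curved background metric $g_0$: by the Morse lemma $\tc$ lies within bounded $g_0$-Hausdorff distance of $\tc_0$, and since $\gamma$ acts isometrically for $g_0$ as well, $d_{g_0}(\gamma^n\tc(0), \gamma^n\tc_0(0)) = d_{g_0}(\tc(0),\tc_0(0))$ stays bounded while $\gamma^n\tc_0(0)$ escapes along $\tc_0$ toward its $g_0$-attracting fixed point; hence $\gamma^n\tc(0) = \tc(n\ell)$ converges to that same point, which under the $\Gamma$-equivariant boundary conjugacy of Remark \ref{rem:minmal} is exactly $\xi_\gamma^+$. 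Therefore $\tc(\infty) = \xi_\gamma^+$, and then $\tc(-\infty) = \xi_\gamma^-$ by the previous step. (Alternatively, one can bypass $g_0$ and argue directly from the uniform visibility property that $\gamma$ has north--south dynamics on $\bar X$ with $\tc(\infty)$ as its sink.) The main obstacle is exactly this last step — everything before it is formal once one has the two ingredients packaged in Remark \ref{rem:minmal}, namely the extension of the $\Gamma$-action to $\bar X$ and the count of fixed points — but even the last step is routine once one exploits that $\gamma$ is a $g_0$-isometry and can therefore be analyzed through the negatively curved picture.
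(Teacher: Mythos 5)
Your proof is correct and follows essentially the same route as the paper, which disposes of this lemma in a single sentence before its statement: the axial isometry fixes $\tc(\pm\infty)$, and Remark \ref{rem:minmal} together with Definition \ref{def:gamma-pm} does the rest. Your additional care in pinning down which endpoint is the attracting fixed point (via $\gamma^n\tc(0)=\tc(n\ell)$ and the Morse-lemma conjugacy with the $g_0$-action) fills in a step the paper leaves implicit, and is a sound way to do it.
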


\begin{remark}
There may be multiple geodesics connecting $\xi_\gamma^\pm$ besides $\tc$ itself; these geodesics may or may not be axes of $\gamma$.
\end{remark}

It follows from Preissman's Theorem that in the background metric $g_0$, given every pair of points on $\ideal$, the set of $\gamma$ that fixes these two points is either trivial or an infinite cyclic subgroup \cite[Lemma 12.3.5]{dC92}. Using the topological conjugacy from Remark \ref{rem:minmal} we get the same result for $g$.

\begin{lemma}\label{lem:Preissman}
Given any geodesic $\tc$ on $X$ such that $\hc = {\pr}\circ \tc$ is closed, the set of $\gamma\in \Gamma$ fixing $\tc(-\infty)$ and $\tc(\infty)$ is an infinite cyclic subgroup.
\end{lemma}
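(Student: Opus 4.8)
The plan is to reduce the statement to the analogous fact for the negatively curved background metric $g_0$, where it is a consequence of Preissman's theorem, and then to transport it back to $g$ using the $\Gamma$-equivariant boundary conjugacy recorded in Remark \ref{rem:minmal}.

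First I would note that $H := \{\gamma \in \Gamma : \gamma\,\tc(-\infty) = \tc(-\infty)\ \text{and}\ \gamma\,\tc(\infty) = \tc(\infty)\}$ is a subgroup of $\Gamma$, being the intersection of the two pointwise stabilizers for the $\Gamma$-action on $\ideal$. Next I would verify that $H$ is nontrivial. Since $\hc = \pr\circ\tc$ is closed, \eqref{eqn:axis} (see \S\ref{sec:|gamma|}) supplies $\ell>0$ and an axial isometry $\gamma_0 \in \Gamma$ with $\tc(t+\ell) = \gamma_0\tc(t)$ for all $t$. As $X$ has no conjugate points, $\exp_{\tc(0)}$ is a diffeomorphism, hence injective, so $t\mapsto \tc(t)=\exp_{\tc(0)}(t\,\dot\tc(0))$ is injective; in particular $\gamma_0\tc(0)=\tc(\ell)\neq\tc(0)$, so $\gamma_0\neq\id$. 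By Lemma \ref{lem:endfix}, $\gamma_0$ fixes $\tc(-\infty)$ and $\tc(\infty)$, hence $\gamma_0 \in H\setminus\{\id\}$.

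Then I would invoke the topological conjugacy $\Psi\colon \ideal \to \partial_{g_0}X$ between the $\Gamma$-actions on the boundaries at infinity of $(X,g)$ and $(X,g_0)$ furnished by the Morse lemma (Remark \ref{rem:minmal}); being a $\Gamma$-equivariant homeomorphism, it satisfies $\Psi(\gamma\xi)=\gamma\Psi(\xi)$ and sends the distinct points $\tc(-\infty),\tc(\infty)$ to distinct points of $\partial_{g_0}X$. Hence $\gamma\in\Gamma$ lies in $H$ if and only if $\gamma$ fixes both $\Psi(\tc(-\infty))$ and $\Psi(\tc(\infty))$; that is, $H$ is exactly the stabilizer in $\Gamma$ of a pair of distinct points at infinity for the negatively curved metric $g_0$. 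By Preissman's theorem in the form \cite[Lemma 12.3.5]{dC92}, such a stabilizer is either trivial or infinite cyclic, and since we have shown it is nontrivial it must be infinite cyclic, which is the assertion of the lemma.

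I do not expect a real obstacle here: the entire content sits in the cited negative-curvature statement, and the remaining steps are bookkeeping. The only points needing a little care are the equivariance of $\Psi$ and the fact that it preserves distinctness of endpoints, both immediate from Remark \ref{rem:minmal}; and, if one preferred a self-contained argument in place of \cite[Lemma 12.3.5]{dC92}, the observation that in negative curvature the two points $\Psi(\tc(\pm\infty))$ are joined by a \emph{unique} geodesic $\sigma$, that $H$ acts on $\sigma\cong\R$ by translations, and that a nontrivial discrete subgroup of the translation group of $\R$ is infinite cyclic.
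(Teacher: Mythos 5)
Your proposal is correct and follows essentially the same route as the paper, which likewise deduces the lemma from Preissman's theorem for the negatively curved background metric $g_0$ (in the form of \cite[Lemma 12.3.5]{dC92}) transported via the $\Gamma$-equivariant boundary conjugacy of Remark \ref{rem:minmal}. The only addition is your explicit verification of nontriviality via the axial isometry, a point the paper leaves implicit.
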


\begin{remark}
It is possible for the subgroup in Lemma \ref{lem:Preissman} to contain non-identity elements for which neither $\tc$ nor its reverse is an axis; for example, this occurs when $\tc$ lifts a boundary circle of a M\"obius strip.
\end{remark}

The preceding discussion lets us go from closed geodesics to deck transformations. We will also need to go in the other direction.

\begin{definition}
Given $\gamma\in\Gamma\setminus\id$, the \emph{length} of $\gamma$ is
\[
|\gamma|:=\inf\{d(q,\gamma q): q\in X\}.
\]
\end{definition}

\begin{lemma}\label{lem:ax}
For every $\gamma\in\Gamma\setminus\id$, there exists $q_0\in X$ such that $d(q_0,\gamma q_0) = |\gamma| \geq 2\inj(M)$. Moreover, if $\tc\colon \R\to X$ is a geodesic joining $q_0$ and $\gamma q_0$, then $\tc(t+|\gamma|) = \gamma\tc(t)$ for all $t\in\R$, so
$\hc := {\pr}\circ \tc$ is a closed geodesic.
\end{lemma}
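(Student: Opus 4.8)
\emph{Overview.} The plan is to establish the three assertions in order: existence of a displacement-minimizing point $q_0$, the lower bound $|\gamma|\ge 2\inj(M)$, and the fact that a geodesic through $q_0$ and $\gamma q_0$ is invariant under $\gamma$. The first is the statement that $\gamma$ acts semisimply; the last is where the no-conjugate-points hypothesis really gets used.

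\emph{Existence of $q_0$.} The function $q\mapsto d(q,\gamma q)$ is not $\Gamma$-invariant, so I would exploit cocompactness of the $\Gamma$-action (equivalently, compactness of $M$). Choose $q_n\in X$ with $d(q_n,\gamma q_n)\to|\gamma|$ and, using a compact fundamental domain $K$, pick $\delta_n\in\Gamma$ with $q_n':=\delta_n q_n\in K$; then $\gamma_n:=\delta_n\gamma\delta_n^{-1}$ satisfies $d(q_n',\gamma_n q_n')=d(q_n,\gamma q_n)\to|\gamma|$, so $\gamma_n K$ meets a fixed compact set and proper discontinuity forces $\{\gamma_n\}$ to be finite. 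Passing to a subsequence along which $\gamma_n\equiv\gamma^\ast=\delta\gamma\delta^{-1}$ and $q_n'\to q^\ast\in K$, continuity gives $d(q^\ast,\gamma^\ast q^\ast)=|\gamma|$; since conjugate deck transformations have the same displacement infimum, $q_0:=\delta^{-1}q^\ast$ satisfies $d(q_0,\gamma q_0)=|\gamma|$.

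\emph{The bound.} Because $M$ has no conjugate points, the injectivity radius at a point $\pr(q)$ equals half the length of the shortest geodesic loop based there, and every such loop lifts to a geodesic segment from $q$ to $\delta q$ with $\delta\in\Gamma\setminus\id$. Hence $2\inj(M)=\inf_{q\in X}\inf_{\delta\in\Gamma\setminus\id}d(q,\delta q)\le\inf_{q\in X}d(q,\gamma q)=|\gamma|$, and $\inj(M)>0$ since $M$ is compact.

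\emph{The ``moreover''.} Reparametrize $\tc$ by arclength with $\tc(0)=q_0$ and $\tc(T)=\gamma q_0$, where $T=|\gamma|$; since any two points of $X$ are joined by a unique geodesic and (by Hopf--Rinow) a minimizing one, $\tc|_{[0,T]}$ realizes $d(q_0,\gamma q_0)=T$. For any $s\in(0,T)$, on one hand $d(\tc(s),\gamma\tc(s))\ge|\gamma|=T$, while going through $\gamma q_0$ gives
\[
d(\tc(s),\gamma\tc(s))\le d(\tc(s),\tc(T))+d(\gamma\tc(0),\gamma\tc(s))=(T-s)+s=T .
\]
Thus equality holds, so $\gamma q_0$ lies on a minimizing geodesic from $\tc(s)$ to $\gamma\tc(s)$; by uniqueness of geodesics in $X$ that geodesic is the concatenation of $\tc|_{[s,T]}$ with $\gamma\tc|_{[0,s]}$, and being minimizing it has no corner at $\gamma q_0$, i.e.\ $\dot\tc(T)=d\gamma_{q_0}(\dot\tc(0))$. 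Together with $\tc(T)=\gamma\tc(0)$ and uniqueness of geodesics, this gives $\tc(t+T)=\gamma\tc(t)$ for all $t$; applying $\pr$ (a local isometry with $\pr\circ\gamma=\pr$) shows $\hc=\pr\circ\tc$ is a $T$-periodic geodesic, hence closed. The delicate point is deducing the absence of a corner at $\gamma q_0$ from the distance equality: this is exactly where the no-conjugate-points hypothesis enters, through uniqueness of connecting geodesics in $X$, and without it the natural minimizer need not be an axis.
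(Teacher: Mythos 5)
Your proof is correct. Note that the paper does not actually prove this lemma: it records it as standard and cites \cite[\S11.7, Theorem 10]{BC64} and \cite[pages 196--197]{tS96}. Your argument is precisely the classical one found there: a minimizing sequence made precompact via a fundamental domain and proper discontinuity, the Klingenberg-type bound $2\inj(M)\le d(q,\delta q)$ for $\delta\neq\id$, and the triangle-inequality/concatenation trick showing every point of $\tc$ realizes the minimal displacement, whence the concatenated minimizer is smooth at $\gamma q_0$ and $\tc$ is an axis. One small point of attribution: the ``no corner'' step uses only that a curve whose length equals the distance between its endpoints is a smooth geodesic, which holds in any complete manifold; the no-conjugate-points hypothesis is really consumed earlier, where you use uniqueness of connecting geodesics in $X$ to conclude that the \emph{arbitrary} geodesic $\tc$ joining $q_0$ and $\gamma q_0$ in the statement is the minimizing one (and, in the injectivity-radius step, that only the loop term and not the conjugate radius is relevant). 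Since you invoke exactly that at the start of the ``moreover'' paragraph, the substance is all in place.
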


Lemma \ref{lem:ax} is standard: the idea of the proof is in \cite[\S11.7, Theorem 10]{BC64}, and a complete proof is in \cite[pages 196--197]{tS96}.
We need two more standard results
relating free homotopy classes, axial isometries, and endpoints on $\ideal$; for completeness we provide proofs in Appendix \ref{sec:geom-pf}. 

\begin{lemma}\label{lem:hom-lift}
Let $\hc_0 \colon \R/\ell_0\Z \to M$ and $\hc_1 \colon \R/\ell_1\Z \to M$ be closed geodesics on $M$ that lie in the same free homotopy class.\footnote{Here we specify the periods $\ell_0,\ell_1$ explicitly, as in \S\ref{sec:closed-curves}; notice that replacing $\ell_j$ with $n\ell_j$ would change the free homotopy class.} Then given any lift $\tc_0$ of $\hc_0$, there is a lift $\tc_1$ of $\hc_1$ such that $(\tc_0,\ell_0)$ and $(\tc_1,\ell_1)$ have the same axial isometry, and hence the same endpoints.
\end{lemma}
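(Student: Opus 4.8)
The plan is to use the standard lifting theory of covering spaces, translating free homotopy on $M$ into conjugacy in $\Gamma \cong \pi_1(M)$, and then to identify axial isometries with the deck transformations obtained this way. First I would fix a lift $\tc_0$ of $\hc_0$, with axial isometry $\gamma_0 \in \Gamma$ determined by \eqref{eqn:axis}, i.e.\ $\tc_0(t+\ell_0) = \gamma_0\tc_0(t)$. I would similarly pick any lift $\tc_1'$ of $\hc_1$, with axial isometry $\gamma_1' \in \Gamma$ for the period $\ell_1$. The key classical fact (see e.g.\ \cite[\S11.7]{BC64} or any standard covering-space reference) is that two closed curves on $M$ are freely homotopic if and only if the corresponding conjugacy classes in $\pi_1(M)$ agree; concretely, a free homotopy $H$ between $\hc_0$ and $\hc_1$ lifts to a homotopy $\tilde H$ in $X$ whose endpoint behavior forces $\gamma_1' = \delta \gamma_0 \delta^{-1}$ for some $\delta \in \Gamma$ (the deck transformation recording how the basepoint moves along the lifted homotopy).

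Given that conjugation relation, I would set $\tc_1 := \delta^{-1}\tc_1'$. This is again a lift of $\hc_1$ (because $\pr$ is $\Gamma$-invariant, $\pr\circ(\delta^{-1}\tc_1') = \pr\circ\tc_1' = \hc_1$), and one checks directly from \eqref{eqn:axis} that its axial isometry for the period $\ell_1$ is $\delta^{-1}\gamma_1'\delta = \gamma_0$:
\begin{equation*}
\tc_1(t+\ell_1) = \delta^{-1}\tc_1'(t+\ell_1) = \delta^{-1}\gamma_1'\tc_1'(t) = \delta^{-1}\gamma_1'\delta\,\tc_1(t) = \gamma_0 \tc_1(t).
\end{equation*}
Thus $(\tc_0,\ell_0)$ and $(\tc_1,\ell_1)$ have the same axial isometry $\gamma_0$, and by Lemma \ref{lem:endfix} they therefore have the same endpoints $\xi_{\gamma_0}^\pm$ on $\ideal$.

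The main obstacle — more a bookkeeping point than a genuine difficulty — is making the lifting-of-the-homotopy argument precise: one must invoke the homotopy lifting property for the covering $\pr\colon X \to M$, track which basepoints are being used, and confirm that the deck transformation $\delta$ produced by the lifted homotopy is exactly the conjugating element relating $\gamma_0$ and $\gamma_1'$. Since $X$ is simply connected this is entirely standard, and the footnote's caveat (that one must keep the periods $\ell_0,\ell_1$ fixed, not replaced by multiples) is automatically respected because $\gamma_0$ is the axial isometry attached to the specific period $\ell_0$. I would present this compactly, citing \cite{BC64} for the covering-space input and invoking Lemma \ref{lem:endfix} for the conclusion about endpoints.
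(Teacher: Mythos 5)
Your proposal is correct and rests on the same essential input as the paper's proof, namely lifting the free homotopy through the covering $\pr\colon X\to M$. The paper packages this slightly more directly: it lifts the homotopy starting from $\tc_0$, notes that each intermediate curve $\tc_s$ has an axial isometry $\gamma_s$ and that $s\mapsto\gamma_s$ is continuous into the discrete group $\Gamma$, hence constant, so the terminal lift $\tc_1$ already has axial isometry $\gamma_0$ — avoiding your intermediate step of choosing an arbitrary lift $\tc_1'$ and then conjugating by $\delta^{-1}$.
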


\begin{lemma}\label{lem:2-axes}
If $\gamma$ is the axial isometry for both $(\tc_0,\ell_0)$ and $(\tc_1,\ell_1)$, then $\ell_0 = \ell_1 = |\gamma|$, and the corresponding closed geodesics lie in the same free homotopy class.
\end{lemma}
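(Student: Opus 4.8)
We are given that $\gamma$ is the axial isometry for both $(\tc_0,\ell_0)$ and $(\tc_1,\ell_1)$, meaning $\tc_j(t+\ell_j)=\gamma\tc_j(t)$ for all $t$ and $j=0,1$. The plan is in two parts: first show $\ell_0=\ell_1=|\gamma|$, then produce a free homotopy between the two closed geodesics.

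For the length claim, I would argue as follows. Since $\tc_j$ is an axis of $\gamma$ with displacement $\ell_j$, the quantity $d(\tc_j(0),\gamma\tc_j(0))=d(\tc_j(0),\tc_j(\ell_j))=\ell_j$ (the last equality because $\tc_j$ is a unit-speed geodesic, hence a minimizing geodesic segment on any interval in the simply connected manifold without conjugate points). Thus $\ell_j\ge|\gamma|$ by definition of $|\gamma|$. For the reverse inequality, I would use Lemma \ref{lem:ax}: there is a point $q_0$ with $d(q_0,\gamma q_0)=|\gamma|$, and the geodesic $\td$ joining $q_0$ to $\gamma q_0$ satisfies $\td(t+|\gamma|)=\gamma\td(t)$. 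So $\td$ is also an axis of $\gamma$ with displacement $|\gamma|$. The key point is then that \emph{all} axes of $\gamma$ have the same displacement. One way to see this: suppose $\tc$ is an axis of $\gamma$ with displacement $\ell>|\gamma|$. Consider the function $t\mapsto d(\tc(t),\td(t))$ where $\td$ is the $|\gamma|$-axis; one wants to derive a contradiction with the divergence property, or with the fact that $\gamma$ has exactly two fixed points at infinity (Remark \ref{rem:minmal}) which both axes must join (Lemma \ref{lem:endfix}). Actually the cleanest route: by Lemma \ref{lem:endfix}, $\tc$ and $\td$ both join $\xi_\gamma^-$ to $\xi_\gamma^+$, so the displacement function $d(q,\gamma q)$ restricted to either geodesic is minimized somewhere; but $\gamma$ translates each geodesic by its own $\ell$, so the displacement is \emph{constant} equal to $\ell$ along $\tc$ and constant equal to $|\gamma|$ along $\td$. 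If $\ell>|\gamma|$, pick the point $q_0$ on $\td$ realizing $|\gamma|$ and note it lies a bounded distance from $\tc$ (both geodesics being asymptotic to the same pair of endpoints, using uniform visibility to control the geometry); then a convexity/comparison argument with the background negatively curved metric $g_0$ — or directly the standard fact that in a visibility manifold two geodesics with the same endpoint pair stay uniformly bounded — forces $d(q_0,\gamma q_0)=\ell$, contradiction. I expect this uniqueness-of-displacement step to be the main obstacle, since without nonpositive curvature one cannot simply invoke convexity of the displacement function; the right tool is likely the flat-strip-type or Morse-lemma estimate available under uniform visibility, possibly already isolated as a lemma elsewhere in the paper or in \cite{CKW}.

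For the free homotopy claim, once we know $\ell_0=\ell_1=\ell=|\gamma|$, I would construct the homotopy directly in the universal cover and then push it down. Both $\tc_0$ and $\tc_1$ are geodesics joining $\xi_\gamma^-$ to $\xi_\gamma^+$, and both satisfy $\tc_j(t+\ell)=\gamma\tc_j(t)$. Define $H\colon[0,1]\times\R\to X$ by letting $H(s,\cdot)$ be a path from $\tc_0$ to $\tc_1$ that is $\gamma$-equivariant in the sense $H(s,t+\ell)=\gamma H(s,t)$: for instance, for each fixed $t$ let $H(s,t)$ run along the unique geodesic in $X$ from $\tc_0(t)$ to $\tc_1(t)$ (unique since $X$ has no conjugate points), parametrized proportionally to arc length. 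Equivariance $H(s,t+\ell)=\gamma H(s,t)$ holds because $\gamma$ is an isometry sending the geodesic from $\tc_0(t)$ to $\tc_1(t)$ to the geodesic from $\tc_0(t+\ell)$ to $\tc_1(t+\ell)$. Therefore $H$ descends to a continuous map $\hH\colon[0,1]\times\R/\ell\Z\to M$ with $\hH(0,\cdot)=\hc_0$ and $\hH(1,\cdot)=\hc_1$, which is exactly a free homotopy between the two closed geodesics (reparametrizing the circle $\R/\ell\Z$ to $\R/\Z$ as in \S\ref{sec:closed-curves}). This second part is routine; the only subtlety is making sure the joining geodesics depend continuously on $t$, which follows from the continuous dependence of geodesics on their endpoints in a manifold without conjugate points (as in Lemma \ref{lem:p-to-xi} and standard ODE dependence).
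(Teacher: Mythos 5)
The length half of your argument has a genuine gap at exactly the step you flag as ``the main obstacle.'' Your proposed mechanism --- that the point $q_0$ lying at bounded distance from the other axis ``forces $d(q_0,\gamma q_0)=\ell$'' --- does not follow: the displacement function $q\mapsto d(q,\gamma q)$ is merely continuous, so proximity to a geodesic along which it equals $\ell$ yields no exact conclusion (and indeed $d(q_0,\gamma q_0)=|\gamma|$ by construction, so no contradiction appears). The appeal to convexity in the background metric $g_0$ also does not transfer, since that would concern $g_0$-geodesics rather than the $g$-axes at hand. The missing idea is an elementary iteration, needing nothing beyond the definition of $\ideal$: if $(\tc_0,\ell_0)$ and $(\tc_1,\ell_1)$ both have axial isometry $\gamma$, then Lemma \ref{lem:endfix} gives $\tc_0(\infty)=\tc_1(\infty)$, which by the very definition of points at infinity means $D:=\sup_{t\ge 0}d(\tc_0(t),\tc_1(t))<\infty$. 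Since $\gamma^n$ is an isometry, $d(\tc_0(n\ell_0),\tc_1(n\ell_1))=d(\tc_0(0),\tc_1(0))$ for all $n$, so the triangle inequality gives
\[
n|\ell_1-\ell_0| = d\big(\tc_1(n\ell_1),\tc_1(n\ell_0)\big) \le d\big(\tc_0(0),\tc_1(0)\big) + D \quad\text{for all } n,
\]
forcing $\ell_0=\ell_1$. Applying this with the axis of displacement $|\gamma|$ supplied by Lemma \ref{lem:ax} (which you set up correctly) then gives the common value $|\gamma|$. This is the paper's argument; your reduction to ``all axes of $\gamma$ have the same displacement'' and your identification of $t\mapsto d(\tc_0(t),\tc_1(t))$ as the relevant quantity were the right instincts, but the proposal as written does not close the step.

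The free-homotopy half is correct and takes a genuinely different route from the paper: you interpolate explicitly by joining $\tc_0(t)$ to $\tc_1(t)$ with the unique $X$-geodesic and verify $\gamma$-equivariance, whereas the paper defines the homotopy only on the boundary of the square $[0,1]\times[0,|\gamma|]$ and extends it to the interior using simple connectedness of $X$. Your version is more concrete but requires the continuity-in-$t$ remark you make; the paper's version is softer and sidesteps that discussion entirely.
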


We remark that these lemmas (apart from the claim about endpoints on $\ideal$) are true for general closed manifolds, although for convenience our proof of Lemma \ref{lem:2-axes} uses $\ideal$ via Lemma \ref{lem:endfix}.

\subsection{Other geometric hypotheses}\label{sec:geom-hyp}

In \cite{CKW} we proved existence and uniqueness of the measure of maximal entropy for a geodesic flow on a closed Riemannian manifold $M$ without conjugate points under the following conditions.
\begin{enumerate}
\item There exists a Riemannian metric $g_0$ on $M$ for which all sectional curvatures are negative.
\item The visibility axiom is satisfied.\footnote{The result in \cite{CKW} is formulated using the divergence property, but they are equivalent when there is a negatively curved background metric.}
\item The fundamental group $\pi_1(M)$ is \emph{residually finite}:  the intersection of its finite index subgroups is trivial.
\item There exists $h_0 < h$ such that 
any ergodic invariant Borel probability measure $\mu$ on $SM$ with entropy $>h_0$ 
is almost expansive.
\end{enumerate}
As discussed in \cite[Remarks 2.1 and 3.2]{CKW}, the first of these conditions is a genuine topological restriction, and excludes certain manifolds of nonpositive curvature that still obey the Margulis asymptotics by \cite{Ri}. It is not known whether or not the remaining three conditions can fail if the first holds.  All four conditions hold for every surface of genus $\geq 2$ \cite[\S3]{CKW}.

In addition to proving existence and uniqueness of the MME, we proved in \cite{CKW} that it is mixing and has a product structure and scaling properties that we describe in the next section.

\section{Scaling properties and product structure}\label{sec:box}

\subsection{Busemann functions and conformal densities}\label{sec:PS}

\begin{lemma}[{\cite[Proposition 1]{jE77}}]
Given $\tv\in SX$ and $q\in X$, the limit
\[
b_{\tv}(q) := \lim_{t\to\infty} \big( d(q,\tc_{\tv}(t)) - t\big)
\]
exists and defines a $C^1$ function on $X$.  Moreover, we have 
\[
\grad b_{\tv}(q) = \lim_{t\to\infty} \grad \big( d(q,\tc_{\tv}(t)) - t \big).
\]
\end{lemma}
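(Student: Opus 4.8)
\emph{Proposal.} Write $p_t := \tc_{\tv}(t)$ and $b_t := d(\cdot,p_t) - t$. The plan is to first get existence of the limit from monotonicity, then identify the candidate gradient via the boundary at infinity, and finally upgrade pointwise convergence of gradients to the $C^1$ statement using the uniform bound $\|\grad b_t\|\equiv 1$. Since $X$ is simply connected without conjugate points, $\exp_p$ is a diffeomorphism for every $p$, so geodesics are globally minimizing and the cut locus of every point is empty; in particular $d(p_0,p_t) = t$ where $p_0 = \pi(\tv)$. The triangle inequality gives, for $s>0$, $d(q,p_{t+s}) \le d(q,p_t) + s$, so $t\mapsto b_t(q)$ is non-increasing, while $d(q,p_t) \ge t - d(q,p_0)$ gives $b_t(q) \ge -d(q,p_0)$; hence $b_{\tv}(q) := \lim_{t\to\infty} b_t(q)$ exists for every $q$. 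Each $b_t$ is $1$-Lipschitz, so $b_{\tv}$ is $1$-Lipschitz, and in particular continuous.

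Next I would fix $q\in X$ and identify the limiting gradient. Since $d(q,p_t)\ge t - d(q,p_0)\to\infty$, for all large $t$ we have $q\ne p_t$ and $b_t$ is smooth near $q$ with $\grad b_t(q) = -\dot\sigma_t(0)$, where $\sigma_t$ is the unit-speed geodesic from $q$ to $p_t$; in particular $\|\grad b_t(q)\| = 1$. Let $\xi := \tc_{\tv}(\infty)\in\ideal$ and $V(q) := -\dot{\tc}_{q,\xi}(0)$, with $\tc_{q,\xi}$ the ray from Lemma \ref{lem:p-to-xi}. I claim $\grad b_t(q)\to V(q)$. First, $p_t\to\xi$ in the cone topology on $\bar X$: with reference point $p_0$ one has $\varphi^{-1}(p_t) = \tfrac{t}{1+t}\tv\to\tv$ and $\varphi(\tv) = f_{p_0}(\tv) = \xi$, and the topology on $\bar X$ is independent of the reference point. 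Applying the homeomorphism $\varphi^{-1}$ based at $q$ and unwinding its definition, $\varphi^{-1}(p_t) = \tfrac{d(q,p_t)}{1+d(q,p_t)}\dot\sigma_t(0)$ and $\varphi^{-1}(\xi) = \dot{\tc}_{q,\xi}(0)$; since the scalars tend to $1$ and all vectors have norm at most $1$, this forces $\dot\sigma_t(0)\to\dot{\tc}_{q,\xi}(0)$, i.e.\ $\grad b_t(q)\to V(q)$. By Lemma \ref{lem:p-to-xi}, $V$ is a continuous unit vector field on $X$.

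Finally I would pass to the limit along curves. For any $C^1$ curve $\gamma\colon[0,1]\to X$ and large $t$, $b_t(\gamma(1)) - b_t(\gamma(0)) = \int_0^1\langle\grad b_t(\gamma(s)),\dot\gamma(s)\rangle\,ds$. The left side tends to $b_{\tv}(\gamma(1)) - b_{\tv}(\gamma(0))$; the integrand tends pointwise to $\langle V(\gamma(s)),\dot\gamma(s)\rangle$ and is dominated by $\|\dot\gamma(s)\|$ because $\|\grad b_t\|\equiv 1$, so dominated convergence gives $b_{\tv}(\gamma(1)) - b_{\tv}(\gamma(0)) = \int_0^1\langle V(\gamma(s)),\dot\gamma(s)\rangle\,ds$. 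Specializing $\gamma$ to short geodesics through an arbitrary point in arbitrary directions shows $b_{\tv}$ is differentiable there with $\grad b_{\tv} = V$; continuity of $V$ then gives $b_{\tv}\in C^1(X)$, and $\grad b_{\tv}(q) = V(q) = \lim_{t\to\infty}\grad b_t(q) = \lim_{t\to\infty}\grad\big(d(q,\tc_{\tv}(t))-t\big)$.

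The main obstacle is the pointwise convergence $\grad b_t(q)\to V(q)$: all the rest is soft, but this step needs the geodesic segments $[q,p_t]$ to converge to the ray $[q,\xi)$, which is precisely what Eberlein's cone-topology compactification delivers and where the uniform visibility hypothesis enters (through Lemma \ref{lem:p-to-xi}). One could instead avoid $\ideal$ and argue directly with co-rays and Arzel\`a--Ascoli as in \cite{jE77}, but the boundary-at-infinity machinery already available in the paper makes the route above shorter.
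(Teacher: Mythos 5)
Your proof is correct, but it is worth noting that the paper does not prove this lemma at all: it is quoted verbatim from Eschenburg \cite[Proposition 1]{jE77}, so any comparison is really with Eschenburg's argument rather than with the paper's. Your three steps are all sound: monotonicity plus the lower bound $d(q,p_t)\ge t-d(q,p_0)$ (valid because absence of conjugate points on the simply connected $X$ makes every geodesic minimizing) gives existence of the limit and the $1$-Lipschitz bound; the identification $\grad b_t(q)=-\dot\sigma_t(0)\to -\dot{\tc}_{q,\xi}(0)$ follows from $p_t\to\xi$ in the cone topology together with base-point independence of that topology; and the dominated-convergence step along curves, combined with continuity of $V$ from Lemma \ref{lem:p-to-xi}, upgrades the integral identity $b_{\tv}(\gamma(1))-b_{\tv}(\gamma(0))=\int_0^1\langle V(\gamma(s)),\dot\gamma(s)\rangle\,ds$ to genuine differentiability with $\grad b_{\tv}=V$ (the error in the first-order Taylor expansion is controlled by the modulus of continuity of $V$, so one gets more than just directional derivatives). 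The trade-off against Eschenburg's original proof is exactly the one you flag at the end: your route leans on the uniform visibility hypothesis through Eberlein's cone-topology machinery (the homeomorphism $\varphi_q$ and the continuity statement in Lemma \ref{lem:p-to-xi}), which is available here because visibility is a standing assumption of the paper, whereas Eschenburg proves the same statement for an arbitrary simply connected manifold without conjugate points by working directly with co-rays, Arzel\`a--Ascoli, and upper/lower support functions, with no boundary at infinity. Within the scope of Theorems \ref{thm:main} and \ref{thm:extend-CKW} your argument loses no generality and is arguably cleaner given what is already set up; one should just be careful that none of the Eberlein results you invoke depend on Busemann functions (they do not -- his construction of $\ideal$ uses only the visibility axiom and divergence of geodesics), so there is no circularity.
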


The function $b_{\tv}$ is called the \emph{Busemann function} associated to $\tv$. It is in fact $C^{1,1}$ \cite[Satz 3.5]{gK86}.

\begin{definition}\label{def:busemann}
Given $p\in X$ and $\xi \in \partial X$, let $\tv\in S_p X$ be the unique unit tangent vector at $p$ such that $\tc_{\tv}(\infty) = \xi$.  
We call $b_\xi(q,p) := b_{\tv}(q)$ the Busemann function based at $\xi$ 
and normalized by $p$ ($b_\xi(p,p) =0$).
\end{definition}

The zero set of the Busemann function is the horosphere through $p$ centered at $\xi$, say \(H_\xi(p)\), and $b_\xi(q,p)$ can be interpreted as the distance you need to travel along the geodesic from $q$ towards $\xi$ in order to reach this horosphere; see Figure \ref{fig:b-beta}. 
With this interpretation the following \emph{cocycle property} of the Busemann function becomes transparent:
\begin{equation}\label{eqn:pqr}
b_\xi(p,q) = b_\xi(p,r) + b_\xi(r,q)
\text{ for all } \xi\in \ideal\text{ and } p,q,r\in X.
\end{equation}
Note also that
\begin{equation}\label{eqn:pq}
b_\xi(p,q) = -b_\xi(q,p) \text{ for all } \xi\in\ideal\text{ and } p,q\in X.
\end{equation}
Figure \ref{fig:b-beta} also illustrates
the function $\beta_p \colon \sqbd \to (0,\infty)$ defined by
\begin{equation}\label{eqn:beta-p}
\beta_p(\xi, \eta) = - (b_\xi(q, p) + b_\eta(q, p)),
\end{equation}
where $q$ is a point on a geodesic $\tc$ connecting $\xi$ and $\eta$.
Geometrically, $\beta_p(\xi, \eta)$ is the length of the
segment $\tc$ which is cut out by the horoballs through
$(p, \xi)$ and $(p, \eta)$.
Since $\grad_q b_\xi(q,p) =- \grad_q b_\eta(q,p)$ 
for all points $q$ on geodesics
connecting $\xi$ and $\eta$, this number is independent of the choice of $q$.

\begin{figure}[htbp]
\begin{tikzpicture}[scale=1.8,baseline=0]
\draw[red] (-.6,0) circle(.4);
\coordinate (q) at ({-1+sqrt(3)/2},-.5);
\draw[blue,dotted] (q) arc (30:90:1);
\draw[blue,thick] (q) arc (30:47:1);
\fill (q) node[below right]{$q$} circle(1pt);
\draw (0,0) circle(1);
\fill (-1,0) node[left]{$\xi$} circle(1pt);
\fill (-.2,0) node[above right]{$p$} circle(1pt);
\draw[dashed,->] (.83,-.63) node[right]{$b_\xi(q,p)=\pm d(q,H_\xi(p))$} -- (-.18,-.33);
\draw (-.6,.1) node{$-$} (.6,.1) node{$+$};
\end{tikzpicture}
\hfill
\begin{tikzpicture}[scale=1.8,baseline=0]
\draw[red] (.4,0) circle(.6);
\coordinate (q) at ({1-sqrt(2)/2},{-1+sqrt(2)/2});
\draw[dotted] (0,-1) arc (180:90:1);
\draw[thick] (q) arc (135:125:1);
\draw[thick] (q) arc (135:152:1);
\draw[blue] (0,-1) arc (-90:270:0.52);
\draw (0,0) circle(1);
\fill (1,0) node[right]{$\xi$} circle(1pt);
\fill (0,-1) node[below]{$\eta$} circle(1pt);
\fill (-.2,0) node[left]{$p$} circle(1pt);
\draw[dashed,->] (.4,.15) node[above]{$\beta_p(\xi,\eta)$} -- (.25,-.25);
\end{tikzpicture}
\caption{Geometric interpretation of $b_p(q,\xi)$ and $\beta_p(\xi,\eta)$.}
\label{fig:b-beta}
\end{figure}
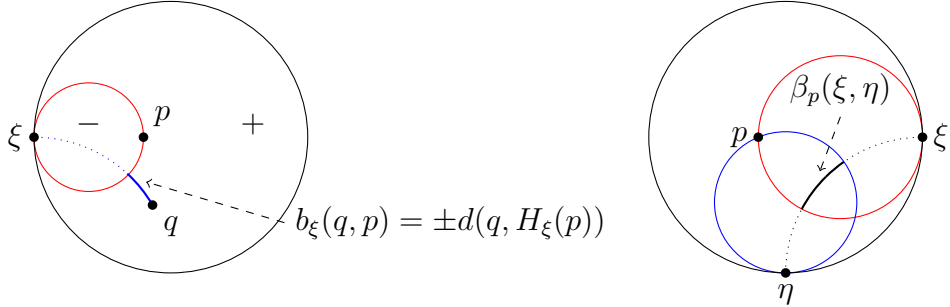

It is easy to see that $b_\xi(p,q)$ is $1$-Lipschitz in both $p$ and $q$ by either using the definition directly or by using \eqref{eqn:pqr} and the fact that $|b_\xi(p,q)| \leq d(p,q)$. We will also need to know how $b_\xi(p,q)$ varies with $\xi$. This is equivalent to understanding how $H_\xi(p)$ varies with $\xi$, which is accomplished by the following result; see \cite[Proposition 6.2]{CKW} for the statement we use and for precise references to the proof, which can be found in \cite{jP77} and uses results from \cite{Eb1} (see also \cite[Lemma 4.11]{rR07}).

\begin{proposition}\label{prop:conti}
Let \((M,g)\) be a closed Riemannian manifold without conjugate points that satisfies the uniform visibility condition and admits a background metric of negative curvature, and let $X$ be its universal cover. Then for every $p\in X$, the map $\xi\mapsto H_\xi(p)$ is continuous in the following sense: if $\xi_n \to \xi\in \ideal$ and $K\subset X$ is compact, then $H_{\xi_n}(p) \cap K \to H_\xi(p) \cap K$ uniformly in the Hausdorff topology.
\end{proposition}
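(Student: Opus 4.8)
The plan is to reduce the assertion to uniform-on-compacts convergence of Busemann functions, $b_{\xi_n}(\cdot,p)\to b_\xi(\cdot,p)$ whenever $\xi_n\to\xi$ in $\ideal$, and then to prove that convergence by an interchange-of-limits argument whose only nontrivial ingredient is a uniform convergence estimate supplied by the uniform visibility axiom. \emph{Reduction to Busemann functions.} Each $b_\eta(\cdot,p)$ is $C^1$ (in fact $C^{1,1}$ by \cite{gK86}) with $\|\grad_q b_\eta(q,p)\|\equiv 1$: along the geodesic ray from $q$ to $\eta$ the cocycle relation \eqref{eqn:pqr} gives $b_\eta(\tc_{q,\eta}(s),p)=b_\eta(q,p)-s$, so that $\grad_q b_\eta(q,p)=-\dot{\tc}_{q,\eta}(0)$ has norm one. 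Hence if $\|b_{\xi_n}(\cdot,p)-b_\xi(\cdot,p)\|_{C^0(\Omega)}\to 0$ for every compact $\Omega\subset X$, then the zero level sets converge in the required sense: given a compact $K$, a slightly larger compact $\Omega\supset K$, and $\epsilon>0$, once $\|b_{\xi_n}(\cdot,p)-b_\xi(\cdot,p)\|_{C^0(\Omega)}<\delta$ with $\delta<\epsilon$ small, any $q\in H_{\xi_n}(p)\cap K$ has $|b_\xi(q,p)|<\delta$, and the unit-speed integral curve of the gradient flow of $b_\xi(\cdot,p)$ through $q$ -- which stays in $\Omega$ over parameter time $<\delta$ and along which $b_\xi(\cdot,p)$ changes at unit rate -- meets $H_\xi(p)$ within distance $\delta$ of $q$; thus $H_{\xi_n}(p)\cap K$ lies in the $\epsilon$-neighborhood of $H_\xi(p)$, and symmetrically $H_\xi(p)\cap K$ lies in the $\epsilon$-neighborhood of $H_{\xi_n}(p)$. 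So it suffices to prove the Busemann function convergence.

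\emph{Reduction to a uniform estimate.} Fix a compact $K\subset X$. For $q\in K$ and $\eta\in\ideal$ set $f_\eta(t):=d(q,\tc_{p,\eta}(t))-t$; the triangle inequality makes $t\mapsto f_\eta(t)$ nonincreasing and bounded below, so $b_\eta(q,p)=\inf_{t\ge 0}f_\eta(t)$. Two things are automatic: for each fixed $t$, $f_{\xi_n}(t)\to f_\xi(t)$ with $|f_{\xi_n}(t)-f_\xi(t)|\le d(\tc_{p,\xi_n}(t),\tc_{p,\xi}(t))$ uniformly in $q\in K$ -- because $\xi_n\to\xi$ forces $\dot{\tc}_{p,\xi_n}(0)\to\dot{\tc}_{p,\xi}(0)$ (this is the definition of the cone topology on $\bar X$, via the homeomorphism $f_p$ of Lemma \ref{lem:p-to-xi}) and geodesics depend continuously on initial conditions -- and $f_\eta(t)\downarrow b_\eta(q,p)$ as $t\to\infty$. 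What is \emph{not} automatic, and is the crux, is that the latter convergence is uniform in $\eta$ and $q$: that there is $T=T(\epsilon,K)$ with
\[
0\le f_\eta(t)-b_\eta(q,p)<\epsilon\qquad\text{for all }t\ge T,\ \eta\in\ideal,\ q\in K.
\]
Granting this, $f_{\xi_n}(T)-\epsilon<b_{\xi_n}(q,p)\le f_{\xi_n}(T)$ and $|f_\xi(T)-b_\xi(q,p)|<\epsilon$, while $f_{\xi_n}(T)\to f_\xi(T)$ uniformly in $q\in K$; hence $\limsup_n\sup_{q\in K}|b_{\xi_n}(q,p)-b_\xi(q,p)|\le 2\epsilon$, and letting $\epsilon\to 0$ finishes the proof.

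\emph{The uniform estimate -- the main obstacle.} This is the step where uniform visibility is indispensable; the mere existence of horofunctions does not give uniformity in $\eta$. The quantity to control is $f_\eta(t)-f_\eta(s)=d(q,\tc_{p,\eta}(t))+d(\tc_{p,\eta}(t),\tc_{p,\eta}(s))-d(q,\tc_{p,\eta}(s))$, the defect in the triangle inequality at the vertex $\tc_{p,\eta}(t)$; letting $s\to\infty$ turns a uniform bound on this defect into the displayed estimate. The point is that $d(q,\tc_{p,\eta}(t))\ge t-\sup_{q\in K}d(p,q)\to\infty$ uniformly in $\eta$ and $q\in K$, and that the visibility constant $L(\epsilon)$ in Definition \ref{def:vis} is independent of the geodesic, so that for $t$ large the comparison angle at $q$ is uniformly small and the comparison angle at $\tc_{p,\eta}(t)$ uniformly close to $\pi$; feeding these bounds into the comparison geometry available under our hypotheses (no conjugate points, uniform visibility, negatively curved background metric, with no two-sided curvature bound to exploit) bounds the defect uniformly. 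Estimates of exactly this character are carried out in Eberlein \cite{Eb1} and in \cite{jP77} (see also \cite[Lemma 4.11]{rR07}); I expect reconstructing this angle-to-defect comparison in the absence of curvature bounds to be the main work, and it is the one step where the geometric hypotheses of the proposition are genuinely used.
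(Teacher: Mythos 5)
There is a genuine gap. First, note that the paper does not prove this proposition at all: it is quoted from \cite[Proposition 6.2]{CKW}, with the actual proof deferred to \cite{jP77}, \cite{Eb1}, and \cite[Lemma 4.11]{rR07}, so there is no in-paper argument to compare against. Judged as a standalone proof, your two reductions are fine: the passage from uniform $C^0$ convergence of Busemann functions to Hausdorff convergence of their zero sets via the unit-norm gradient flow is correct (modulo the usual care with intersecting against $K$ versus a slightly larger $\Omega$), and the interchange-of-limits scheme using $f_\eta(t)=d(q,\tc_{p,\eta}(t))-t$ is standard. But the entire content of the proposition is packed into the ``uniform estimate'' $0\le f_\eta(t)-b_\eta(q,p)<\epsilon$ for $t\ge T(\epsilon,K)$ uniformly in $\eta\in\ideal$ and $q\in K$, and you do not prove it -- you state that you ``expect'' it to follow from an angle-to-defect comparison and point back at the same references the paper cites. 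Since $\ideal$ is compact and $f_\eta(t)\downarrow b_\eta(q,p)$ monotonically, Dini's theorem shows this uniform estimate is \emph{equivalent} to the continuity of $\eta\mapsto b_\eta(q,p)$ that you are trying to establish; so the reduction has not actually reduced the difficulty, it has relocated all of it into the one step left open.

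Moreover, the mechanism you sketch for that step is doubtful as stated. Uniform visibility gives implications in the direction ``the geodesic stays far from $q$ $\Rightarrow$ the angle at $q$ is small''; it does not provide a converse comparison converting ``small angle at $q$ and angle near $\pi$ at $\tc_{p,\eta}(t)$'' into a bound on the metric defect $d(q,\tc(t))+d(\tc(t),\tc(s))-d(q,\tc(s))$. That kind of angle-to-length comparison is exactly what two-sided curvature bounds (or convexity of the distance function, available in nonpositive curvature but not here) would supply, and you correctly observe that none of these is among the hypotheses. The actual proofs in \cite{jP77} and \cite{rR07} run through different machinery -- continuity of $(p,\xi)\mapsto\dot{\tc}_{p,\xi}(0)$, the divergence property, and quantitative asymptoticity of geodesics with the same endpoint at infinity -- rather than a triangle-defect estimate. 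So the one step ``where the geometric hypotheses are genuinely used'' is both unproven and attached to a strategy that does not obviously close.
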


\begin{corollary}\label{cor:bus-cts}
The functions $(v,q) \mapsto b_v(q)$ and $(\xi,p,q) \mapsto b_\xi(p,q)$ are continuous on $SX \times X$ and $\ideal \times X \times X$, respectively.
\end{corollary}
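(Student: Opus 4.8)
\textbf{Proof plan for Corollary \ref{cor:bus-cts}.}

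The plan is to derive the joint continuity of the Busemann cocycle from Proposition \ref{prop:conti}, which tells us that horospheres vary continuously with their center at infinity. First I would reduce the two assertions to a single one: since $b_v(q) = b_{v^+}(q,\pi(v))$ where $\pi(v)$ is the footpoint of $v$, and $v\mapsto (v^+,\pi(v))$ is continuous (continuity of the endpoint map $E$ was noted after \eqref{eqn:endpoints}, and $\pi$ is continuous), the first statement follows from the second once we know $(\xi,p,q)\mapsto b_\xi(p,q)$ is continuous on $\ideal\times X\times X$. So it suffices to prove the latter.

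For the second statement, fix a convergent sequence $\xi_n\to\xi$ in $\ideal$ and $p_n\to p$, $q_n\to q$ in $X$. Using the cocycle property \eqref{eqn:pqr}, write $b_{\xi_n}(p_n,q_n) = b_{\xi_n}(p_n,q) + b_{\xi_n}(q,q_n)$; the second term is bounded by $d(q,q_n)\to 0$ by the $1$-Lipschitz property (stated in the excerpt just before Proposition \ref{prop:conti}), so it is harmless, and similarly one may first move $p_n$ to $p$ at the cost of an error $\le d(p,p_n)\to 0$. Thus it remains to show $b_{\xi_n}(p,q)\to b_{\xi}(p,q)$ for \emph{fixed} $p,q$ as $\xi_n\to\xi$. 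Here is where Proposition \ref{prop:conti} enters: recall the geometric interpretation that $b_{\xi}(q,p)$ is the signed distance from $q$ to the horosphere $H_\xi(p)$, i.e.\ $b_\xi(q,p) = \pm d(q, H_\xi(p))$ with the sign determined by which side of the horosphere $q$ lies on. Take a compact set $K\subset X$ large enough to contain $q$ and to meet $H_{\xi_n}(p)$ and $H_\xi(p)$ in their relevant portions for all large $n$ (possible since all these horospheres pass through the fixed point $p$). Proposition \ref{prop:conti} gives $H_{\xi_n}(p)\cap K \to H_\xi(p)\cap K$ in the Hausdorff metric, so $d(q,H_{\xi_n}(p)\cap K)\to d(q,H_\xi(p)\cap K)$; a short argument handles the sign (the sign is locally constant away from the horosphere, and when $q\in H_\xi(p)$ both the limit and the candidate limit value are $0$), yielding $b_{\xi_n}(q,p)\to b_\xi(q,p)$ and hence, via \eqref{eqn:pq}, the desired convergence of $b_{\xi_n}(p,q)$.

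I expect the main obstacle to be the bookkeeping around the sign and around the choice of the compact set $K$: one must be careful that Hausdorff convergence of $H_{\xi_n}(p)\cap K$ genuinely controls the distance $d(q,H_{\xi_n}(p))$ and not just $d(q, H_{\xi_n}(p)\cap K)$, which requires knowing that the nearest point of the full horosphere $H_{\xi_n}(p)$ to $q$ stays in a fixed compact region for $n$ large --- this follows from the uniform visibility hypothesis (geodesics from $q$ toward nearby $\xi_n$ stay uniformly close on compact time intervals, by Lemma \ref{lem:p-to-xi} and the definition of the cone topology) together with the fact that the horospheres all pass through $p$, so they cannot escape to infinity near $q$. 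Everything else is a routine combination of the $1$-Lipschitz estimates, the cocycle identity \eqref{eqn:pqr}, and \eqref{eqn:pq}.
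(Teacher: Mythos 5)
The paper gives no written proof of this corollary---it is presented as an immediate consequence of Proposition \ref{prop:conti}---and your derivation (reduce the first map to the second via $b_v(q)=b_{v^+}(q,\pi(v))$; reduce joint continuity to continuity in $\xi$ alone using the cocycle identity \eqref{eqn:pqr} and the $1$-Lipschitz bounds in $p$ and $q$; then invoke the Hausdorff convergence of horospheres) is exactly the intended route. The reduction steps are all correct, and your observation that the nearest point of $H_{\xi_n}(p)$ to $q$ stays in a fixed compact set because $p\in H_{\xi_n}(p)$ is the right way to pass from $H_{\xi_n}(p)\cap K$ to the full horosphere.

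The one step that does not work as written is the sign. Hausdorff convergence of the sets $H_{\xi_n}(p)\cap K$ gives $|b_{\xi_n}(q,p)|=d(q,H_{\xi_n}(p))\to d(q,H_\xi(p))=|b_\xi(q,p)|$, but it carries no information about which side of $H_{\xi_n}(p)$ the fixed point $q$ lies on; your remark that ``the sign is locally constant away from the horosphere'' concerns varying $q$ with $\xi$ fixed, whereas what you need is that the sign of $b_{\xi_n}(q,p)$ stabilizes as $\xi_n\to\xi$ with $q$ fixed, and that does not follow from set convergence alone. Two ways to close this: (i) note that $c_{p,\xi_n}(t)$ lies in the open horoball $\{b_{\xi_n}(\cdot,p)<0\}$ for $t>0$ and that these rays converge to $c_{p,\xi}$ by Lemma \ref{lem:p-to-xi}, which pins down the side; or, cleaner, (ii) observe that $t\mapsto d(q,c_{p,\xi_n}(t))-t$ is nonincreasing, so $b_{\xi_n}(q,p)=\inf_{t\ge0}\big(d(q,c_{p,\xi_n}(t))-t\big)$, whence $\limsup_n b_{\xi_n}(q,p)\le b_\xi(q,p)$ by pointwise convergence of the rays; applying the same upper semicontinuity to the rays emanating from $q$ and using the antisymmetry \eqref{eqn:pq} gives the matching $\liminf$, and hence convergence, with no sign bookkeeping at all. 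Either repair is short, so this is a fixable gap rather than a wrong approach.
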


Now we can define the types of measures that we work with to carry out the counting argument.

\begin{definition}\label{def:conformal} 
Given $h>0$, an \emph{$h$-conformal density} on $\partial X$ is a family of finite measures $\{\mu_p\}_{p \in X}$ on $\partial X$ with the following properties.
\begin{enumerate}
\item\label{PS-a} $\supp \mu_p = \partial X$ for all $p \in X$.
\item\label{PS-b} $\{\mu_p\}_{p \in X}$ is $\Gamma$-equivariant: for all Borel sets $A \subset \partial X$, we have
\[
\mu_{\gamma p} (\gamma A) = \mu_p(A).
\]
\item\label{PS-c} $\frac{d \mu_q}{d\mu_p} (\xi) = e^{-h b_\xi(q, p)}$
for almost all $\xi  \in \partial X$.
\end{enumerate}
\end{definition}

In the setting of Theorem \ref{thm:main}, an $h$-conformal density was obtained in \cite[Proposition 5.1]{CKW} via a Patterson--Sullivan construction, where $h$ is the topological entropy of the geodesic flow.  Given a conformal density,
the proof of \cite[Lemma 2.4]{K5} gives the following.

\begin{lemma}\label{5.4.A}
For $p \in X$, the measure $\bar\mu$ on $\sqbd \subset (\ideal)^2$ defined by
\begin{equation}\label{eqn:barmu}
d \bar\mu (\xi, \eta) = e^{h\beta_p(\xi,\eta)}\,
 d\mu_p(\xi)\, d\mu_p(\eta)
\end{equation}
is $\Gamma$-invariant.
\end{lemma}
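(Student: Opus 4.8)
The plan is to verify $\Gamma$-invariance of $\bar\mu$ directly from the defining formula \eqref{eqn:barmu}, using the $\Gamma$-equivariance of the conformal density (property \eqref{PS-b} of Definition \ref{def:conformal}) together with the transformation law for $\beta_p$ under deck transformations. The key observation is that $\bar\mu$ is built from the reference point $p$, but the whole point is that it shouldn't actually depend on $p$: first I would show that $d\bar\mu$ is independent of the choice of $p\in X$, and then $\Gamma$-invariance follows by combining this $p$-independence with equivariance.

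Concretely, the main computation is the following. Fix $p\in X$ and $\gamma\in\Gamma$. Using property \eqref{PS-c}, for any $q\in X$ we have $d\mu_q(\xi) = e^{-hb_\xi(q,p)}\,d\mu_p(\xi)$, so from \eqref{eqn:beta-p} and the definition of $\beta_q$ we get $\beta_q(\xi,\eta) = \beta_p(\xi,\eta) - b_\xi(q,p) - b_\eta(q,p) + (\text{correction})$; more precisely, a short manipulation with the cocycle property \eqref{eqn:pqr} shows that
\[
e^{h\beta_q(\xi,\eta)}\,d\mu_q(\xi)\,d\mu_q(\eta) = e^{h\beta_p(\xi,\eta)}\,d\mu_p(\xi)\,d\mu_p(\eta),
\]
i.e.\ the measure $\bar\mu$ defined by \eqref{eqn:barmu} is the same whether we use $p$ or $q$ as reference point. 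The exponents match because $\beta_q(\xi,\eta) = -(b_\xi(q,p)+b_\eta(q,p)) + \beta_p(\xi,\eta)$ is exactly compensated by the Radon--Nikodym factors $e^{-hb_\xi(q,p)}$ and $e^{-hb_\eta(q,p)}$ coming from $d\mu_q(\xi)\,d\mu_q(\eta)$.

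Granting $p$-independence, $\Gamma$-invariance is immediate: for a Borel set $A\subset\sqbd$ and $\gamma\in\Gamma$, I compute $\bar\mu(\gamma A)$ using the reference point $\gamma p$ in the definition \eqref{eqn:barmu}. By naturality of Busemann functions under isometries one has $b_{\gamma\xi}(\gamma q,\gamma p) = b_\xi(q,p)$ and hence $\beta_{\gamma p}(\gamma\xi,\gamma\eta) = \beta_p(\xi,\eta)$; combined with the equivariance $\mu_{\gamma p}(\gamma\cdot) = \mu_p(\cdot)$ from property \eqref{PS-b}, the change of variables $(\xi,\eta)\mapsto(\gamma\xi,\gamma\eta)$ carries $d\bar\mu$ computed at $\gamma p$ over $\gamma A$ to $d\bar\mu$ computed at $p$ over $A$. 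Since $\bar\mu$ computed at $\gamma p$ equals $\bar\mu$ computed at $p$ by the $p$-independence just established, we conclude $\bar\mu(\gamma A) = \bar\mu(A)$.

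The only mildly delicate point — and the step I would expect to be the main obstacle, though it is more bookkeeping than genuine difficulty — is getting the signs and the direction of the cocycle identity \eqref{eqn:pqr} exactly right when manipulating $\beta_q$ versus $\beta_p$, since $\beta$ is defined with a minus sign and the Busemann cocycle involves the antisymmetry relation \eqref{eqn:pq}. One must also note that property \eqref{PS-c} holds only $\mu_p$-almost everywhere, but this causes no trouble: the identity relating the two product measures then holds almost everywhere with respect to $d\mu_p(\xi)\,d\mu_p(\eta)$, which is all that is needed since both sides are absolutely continuous with respect to it. Since this is precisely the content of \cite[Lemma 2.4]{K5}, I would simply cite that computation, noting that the cited proof goes through verbatim once Corollary \ref{cor:bus-cts} guarantees the relevant functions are measurable (indeed continuous).
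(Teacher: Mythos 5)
Your approach is correct and is exactly the standard computation that the paper itself defers to by citing \cite[Lemma 2.4]{K5}: change of reference point via property \eqref{PS-c} plus the cocycle identity, then $\Gamma$-equivariance of $\{\mu_p\}$ and naturality of Busemann functions under isometries. The one concrete slip is the sign in your intermediate formula: writing $r$ for the point on the geodesic joining $\xi$ and $\eta$, the cocycle property \eqref{eqn:pqr} gives $b_\xi(r,q)=b_\xi(r,p)+b_\xi(p,q)$, hence by \eqref{eqn:pq}
\[
\beta_q(\xi,\eta)=\beta_p(\xi,\eta)-b_\xi(p,q)-b_\eta(p,q)=\beta_p(\xi,\eta)+b_\xi(q,p)+b_\eta(q,p),
\]
with a plus sign rather than the minus sign you wrote; it is this version that is exactly cancelled by the Radon--Nikodym factors $e^{-hb_\xi(q,p)}e^{-hb_\eta(q,p)}$ in $d\mu_q(\xi)\,d\mu_q(\eta)$. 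With that correction the $p$-independence, and hence the $\Gamma$-invariance, goes through as you describe.
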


Under the hypotheses of Theorem \ref{thm:main}, we proved in \cite[Theorem 5.6 and Remark 5.7]{CKW}
that $\bar\mu$-a.e.\ pair $(\xi,\eta) \in \sqbd$ is connected by a unique geodesic $V(\xi,\eta)$, and that writing $\lambda_{\xi,\eta}$ for Lebesgue measure along the trajectory corresponding to $V(\xi,\eta)$, the measure defined on $SX$ by
\begin{equation}\label{eqn:Kaim}
\tm(A) = \int_{\sqbd} \lambda_{\xi,\eta}(A) \,d\bar\mu(\xi,\eta)
\end{equation}
is related to the unique MME $\hm$ on $SM$ by
\begin{equation}\label{eqn:hmtm}
\tm(A) = \int_{SM} \#(\pr_*^{-1}(\hv) \cap A) \,d\hm(\hv).
\end{equation}
Moreover, in \cite[Theorem 6.1 and Remark 6.2]{CKW} we used an argument of Babillot \cite{Bab} to prove that the flow is mixing with respect to $\hm$ under these same assumptions.

\subsection{Local product structure near expansive vectors}\label{sec:lps}

From now on we fix an expansive vector $\tv_0 \in \mathcal{E}$ (which is nonempty since it has full measure for $\tm$), and let $p = \pi(\tv_0) \in X$.  This will be a `reference point' for all the definitions that follow, and we will suppress $\tv_0$ and $p$ from the notation.  We fix a scale $\epsilon \in (0,\min(\frac 18, \frac{\inj (M)}{4})]$, where $\inj(M)$ is the injectivity radius
of $M$.
At the very end of the proof, in \S\ref{sec:completion}, we will need to take a limit as $\epsilon\to 0$, but until then $\epsilon$ will be fixed.

\begin{definition}\label{def:hopf}
The \emph{Hopf map} $H \colon SX \to \sqbd \times \R$ for $p\in X$ is 
\begin{equation}\label{eqn:hopf}
H(\tv) := (\tv^-, \tv^+, s(\tv)),
\text{ where }
s(\tv) :=b_{\tv^-}(\pi \tv, p).
\end{equation}
\end{definition}

The following fundamental fact will be useful:
\begin{equation}\label{eqn:+t}
s(\phi^t \tv) = s(\tv) + t
\text{ for all } \tv\in SX \text{ and } t\in \R.
\end{equation}

Note that the Hopf map is continuous by Corollary \ref{cor:bus-cts} and the definition of the topology on $\ideal$.
Following Ricks \cite{Ri}, we use the Hopf map to define a local product structure on a neighborhood in $SX$.  Given disjoint sets $\Pa,\Fu\subset \ideal$, the set $H^{-1}(\Pa\times\Fu\times\{0\})$ represents the set of all $\tv\in SX$ whose past history under $\phi^t$ is given by $\Pa$, whose future evolution is given by $\Fu$, and 
such that $b_{v^-}(\pi v, p) = 0$.

\begin{lemma}\label{lem:cpt}
For any disjoint closed sets $\Pa,\Fu\subset \ideal$, the set $H^{-1}(\Pa\times\Fu\times\{0\})\subset SX$ is compact.
\end{lemma}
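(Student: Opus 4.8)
The plan is to show that $K := H^{-1}(\Pa\times\Fu\times\{0\})$ is a closed subset of a compact set. Since $SX$ is not compact, the compactness must come from the fact that the geodesics realizing endpoint pairs in $\Pa\times\Fu$ cannot escape to infinity in $X$, together with the normalization $s(\tv)=0$ that pins down the footpoint along each such geodesic. First I would observe that $K$ is closed: $H$ is continuous (by Corollary \ref{cor:bus-cts} and the definition of the topology on $\ideal$), and $\Pa\times\Fu\times\{0\}$ is closed in $\sqbd\times\R$ because $\Pa,\Fu$ are closed and disjoint (so $\Pa\times\Fu\subset\sqbd$, and $\Pa\times\Fu$ is itself compact in $(\ideal)^2$). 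Hence $K=H^{-1}(\Pa\times\Fu\times\{0\})$ is closed in $SX$.

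The crux is therefore to produce a compact set $\hat K\subset SX$ containing $K$; then $K$, being closed in $SX$ and contained in $\hat K$, is compact. For this I would use the uniform visibility property. Fix a point $q_0\in X$; I claim there is $R>0$ such that every geodesic $\tc$ with $\tc(-\infty)\in\Pa$ and $\tc(\infty)\in\Fu$ passes within distance $R$ of $q_0$. Indeed, if not, there is a sequence of such geodesics $\tc_n$ staying at distance $\to\infty$ from $q_0$; by uniform visibility (Definition \ref{def:vis}) the angle $\measuredangle_{q_0}(\tc_n)\to 0$, so the endpoints $\tc_n(-\infty),\tc_n(+\infty)$ — which lie in the compact sets $\Pa,\Fu$ — have the property that, after passing to subsequences, $\tc_n(-\infty)\to\xi\in\Pa$ and $\tc_n(+\infty)\to\eta\in\Fu$ with $\xi=\eta$ (the limiting angle at $q_0$ between the two rays is $0$, forcing the same boundary point, using Lemma \ref{lem:p-to-xi} that the ray from $q_0$ to a boundary point is unique and its initial vector depends continuously on the boundary point). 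This contradicts $\Pa\cap\Fu=\emptyset$. So such an $R$ exists.

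Now for $\tv\in K$, let $\tc=\tc_{\tv}$. By the previous paragraph there is $t_0\in\R$ with $d(\tc(t_0),q_0)\le R$. I need to bound $|t_0|$, i.e.\ control where the footpoint $\pi\tv=\tc(0)$ sits relative to the point of $\tc$ closest to $q_0$; this is exactly what the condition $s(\tv)=b_{\tv^-}(\pi\tv,p)=0$ does. Using the cocycle property \eqref{eqn:pqr} and \eqref{eqn:+t}, $s(\tc(t_0'))=t_0'$ for the parametrization with $\tc(0)=\pi\tv$, so $b_{\tv^-}(\tc(t_0),p)=t_0$; since $b_{\tv^-}(\cdot,p)$ is $1$-Lipschitz and $|b_{\tv^-}(q_0,p)|\le d(q_0,p)$, we get $|t_0|=|b_{\tv^-}(\tc(t_0),p)|\le d(\tc(t_0),q_0)+d(q_0,p)\le R+d(q_0,p)=:R'$. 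Hence $d(\pi\tv,q_0)=d(\tc(0),q_0)\le d(\tc(0),\tc(t_0))+d(\tc(t_0),q_0)\le |t_0|+R\le R'+R$. Therefore $\pi(K)$ is contained in the closed metric ball $\bar B(q_0,R'+R)$ in $X$, which is compact (geodesic completeness, Hopf–Rinow), so $\hat K:=\pi^{-1}(\bar B(q_0,R'+R))\cap SX$ is compact, and $K\subset\hat K$. Since $K$ is closed in $SX$ and contained in the compact set $\hat K$, $K$ is compact.

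The main obstacle is the claim that the realizing geodesics for $\Pa\times\Fu$ stay in a bounded region: this is where compactness of $\Pa,\Fu$ and their disjointness are both essential, and where one must invoke uniform visibility correctly — specifically, ruling out a sequence of such geodesics escaping to infinity by showing their endpoint pairs would have to collide on $\ideal$. I would want to double-check the continuity input (that small angle at $q_0$ for a long geodesic forces its two endpoints to be close on $\ideal$, uniformly for geodesics far from $q_0$), which follows from Lemma \ref{lem:p-to-xi} and the definition of the sphere topology via $f_{q_0}$; everything after that is a routine Lipschitz/triangle-inequality estimate.
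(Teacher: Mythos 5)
Your proof is correct, but it establishes boundedness by a genuinely different route than the paper. The paper's proof is a two-line transfer argument: it takes the Hopf map $H_0$ of the negatively curved background metric $g_0$, which \emph{is} a homeomorphism, so $H_0^{-1}(\Pa\times\Fu\times\{0\})$ is compact; the Morse lemma then says $H_0^{-1}\circ H$ moves points a uniformly bounded distance, so $H^{-1}(\Pa\times\Fu\times\{0\})$ sits in a bounded neighborhood of a bounded set. You instead work intrinsically in the metric $g$: uniform visibility forces every geodesic with one endpoint in $\Pa$ and the other in the disjoint compact set $\Fu$ to pass within a uniform distance $R$ of a fixed basepoint (else the endpoints would collide on $\ideal$), and the normalization $s(\tv)=0$ together with the $1$-Lipschitz property of Busemann functions and \eqref{eqn:+t} pins the footpoint to a bounded ball. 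Both arguments are sound; yours avoids the background metric and the Morse lemma entirely, using only uniform visibility (plus the no-conjugate-points structure of $\ideal$), which makes it somewhat more self-contained and in principle applicable under weaker hypotheses, at the cost of being longer. One small point worth making explicit if you write this up: passing from ``the angle subtended at $q_0$ by the segment $\tc_n|_{[-T,T]}$ is small'' to ``the angle at $q_0$ between the two ideal endpoints is small'' uses that, in the cone topology with basepoint $q_0$, the initial direction of the geodesic from $q_0$ to $\tc_n(t)$ converges to $f_{q_0}^{-1}(\tc_n(\pm\infty))$ as $t\to\pm\infty$; you flag this correctly and it does follow from the definition of the cone topology and Lemma \ref{lem:p-to-xi}.
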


We will use the following choice of $\Pa,\Fu$: given our fixed choice of $\tv_0 \in S_p X \cap \mathcal{E}$, we consider for each $\theta>0$ the sets
\begin{equation}\label{eqn:Cpm}
\begin{aligned}
\Pa = \Pa_\theta &:= \{ \tw^- : \tw\in S_p X \text{ and } \measuredangle_p(\tw,\tv_0) \leq \theta \}, \\
\Fu = \Fu_\theta &:= \{ \tw^+ : \tw\in S_p X \text{ and } \measuredangle_p(\tw,\tv_0) \leq \theta \}.
\end{aligned}
\end{equation}
The function $\theta \mapsto \bar\mu(\Pa_\theta\times\Fu_\theta)$ is nondecreasing and thus has at most countably many discontinuities; from now on we assume that $\theta$ is chosen to be a point of continuity of this function, so that
\begin{equation}\label{eqn:cty}
\lim_{\rho\to\theta} \bar\mu(\Pa_\rho\times\Fu_\rho) = \bar\mu(\Pa_\theta\times\Fu_\theta).
\end{equation}
The assumption that $\tv_0 \in \mathcal{E}$ lets us strengthen the boundedness result in Lemma \ref{lem:cpt}.

\begin{lemma}\label{lem:inj0}
Let $\tv_0,p,\epsilon$ be as above.  Then there exists $\theta_1 >0$ such that for all $0 < \theta \le \theta_1$ we have $\diam \pi H^{-1}(\Pa\times\Fu\times\{0\}) < \frac\epsilon2$.
\end{lemma}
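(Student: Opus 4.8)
The plan is to argue by contradiction using the compactness supplied by Lemma \ref{lem:cpt}, together with the expansivity hypothesis $\tv_0\in\mathcal E$. Suppose the conclusion fails: then there is a sequence $\theta_n\downarrow 0$ and, for each $n$, a pair of vectors $\tw_n,\tw_n'\in H^{-1}(\Pa_{\theta_n}\times\Fu_{\theta_n}\times\{0\})$ with $d(\pi\tw_n,\pi\tw_n')\ge \frac\epsilon2$. Since $\theta_1\ge\theta_n$ for $n$ large, all these vectors lie in the single compact set $K:=H^{-1}(\Pa_{\theta_1}\times\Fu_{\theta_1}\times\{0\})$, so after passing to a subsequence we may assume $\tw_n\to\tw$ and $\tw_n'\to\tw'$ in $SX$, with $d(\pi\tw,\pi\tw')\ge\frac\epsilon2$; in particular $\tw\neq\tw'$.

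Next I would identify the limits. Because $\measuredangle_p(\tw_n,\tv_0)$ and $\measuredangle_p(\tw_n',\tv_0)$ are controlled: by definition of $\Pa_{\theta_n},\Fu_{\theta_n}$ there are vectors $\tu_n,\tu_n'\in S_pX$ with $\measuredangle_p(\tu_n,\tv_0)\le\theta_n$ and $\tw_n^-=\tu_n^-$, and similarly a vector realizing $\tw_n^+$. Since $\theta_n\to 0$ we get $\tu_n\to\tv_0$ in $S_pX$, hence, using continuity of the endpoint map $E$ (stated just after \eqref{eqn:endpoints}) and of the Hopf map, $\tw_n^-\to\tv_0^-$ and $\tw_n^+\to\tv_0^+$. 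Passing to the limit, $\tw^-=\tw'^-=\tv_0^-$ and $\tw^+=\tw'^+=\tv_0^+$. Thus $\tw$ and $\tw'$ are both in $E^{-1}(\tv_0^-,\tv_0^+)$.

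Now the expansivity hypothesis finishes the argument: $\tv_0\in\mathcal E$ means, by \eqref{eqn:expansive}, that $E^{-1}(\tv_0^-,\tv_0^+)=\phi^{\R}(\tv_0)$, a single geodesic orbit. So $\tw=\phi^{s}\tv_0$ and $\tw'=\phi^{s'}\tv_0$ for some $s,s'\in\R$. But $\tw,\tw'$ are limits of vectors in $K$, which is contained in $H^{-1}(\sqbd\times\{0\})$, i.e.\ $s(\tw)=s(\tw')=0$; by the translation identity \eqref{eqn:+t}, $s(\phi^t\tv_0)=s(\tv_0)+t=t$ (noting $s(\tv_0)=b_{\tv_0^-}(\pi\tv_0,p)=b_{\tv_0^-}(p,p)=0$), so $s=s'=0$ and $\tw=\tw'=\tv_0$. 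This contradicts $d(\pi\tw,\pi\tw')\ge\frac\epsilon2>0$, proving the lemma with the appropriate $\theta_1$.

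The only delicate point is the continuity argument identifying the limiting endpoints: one must be careful that the vectors realizing the boundary points $\tw_n^\pm$ (which need not be $\tw_n$ itself, since $\pi\tw_n\neq p$ in general) genuinely converge to $\tv_0$ in $S_pX$, and then invoke continuity of $\tv\mapsto(\tv^-,\tv^+)$ on $SX$ together with continuity of the topology on $\ideal$. Everything else — the compactness, the subsequence extraction, and the final contradiction via \eqref{eqn:expansive} and \eqref{eqn:+t} — is routine once this is in place; the real content of the lemma is that expansivity of $\tv_0$ collapses the fibre $E^{-1}(\tv_0^-,\tv_0^+)$ to a single orbit, which combined with the normalization $s\equiv 0$ pins the limit down to $\tv_0$.
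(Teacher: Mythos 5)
Your proof is correct and follows essentially the same route as the paper's: argue by contradiction, use the compactness from Lemma \ref{lem:cpt} to extract convergent subsequences, identify the limiting endpoints as $(\tv_0^-,\tv_0^+)$, and invoke $\tv_0\in\mathcal{E}$. In fact you are slightly more careful than the paper at the final step, using $s(\tw)=s(\tw')=0$ together with \eqref{eqn:+t} to pin both limits down to $\tv_0$ itself, a point the paper's proof leaves implicit.
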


Given $\alpha \in (0,\frac 32\epsilon]$, we consider the \emph{flow box}
\begin{equation}\label{eqn:B}
\begin{aligned}
\tB = \tB_\theta^\alpha &:= H^{-1}(\Pa\times\Fu \times [0, \alpha]) \\
&= \{\tw\in SX : (\tw^-,\tw^+) \in \Pa\times\Fu \text{ and } s(\tw) \in [0,\alpha]\} \\
&= \bigcup\{\phi^{[0, \alpha]}\tw  :  (\tw^-, \tw^+) \in \Pa\times \Fu \text{ and } s(\tw) = 0\},
\end{aligned}
\end{equation}
which is a union of tangent vectors to geodesic rays of length $\alpha$; note that each trajectory of $\phi^t$ that enters $\tB$ must do so through the ``front face'' $H^{-1}(\Pa\times\Fu\times\{0\})$.

We will occasionally write $\tB^\alpha$ or $\tB_\theta$ when only one of $\alpha,\theta$ needs to be explicitly specified (the other being constant through a long portion of the proof).
In fact we will only need to consider the cases $\alpha = \epsilon$, $\alpha = \epsilon \pm 4\epsilon^2$, and $\alpha = \epsilon^2$. The last of these is important enough to deserve its own notation, and we write
\begin{equation}\label{eqn:S}
\tS = \tS_\theta := \tB_\theta^{\epsilon^2} = H^{-1}(\Pa\times\Fu\times[0,\epsilon^2]).
\end{equation}
Thus from now on we will always use $\tB$ to denote a \emph{box} with depth $\alpha \approx \epsilon$, and $\tS$ to denote the \emph{slice} with depth exactly $\epsilon^2$. We point out that since $0<\epsilon\leq \frac 18$, we have $\epsilon + 4\epsilon^2 \leq \frac 32\epsilon$ and $\epsilon - 4\epsilon^2 \geq \frac \epsilon2$.
We also observe that by \eqref{eqn:cty} and the product structure of $\tS$ and $\tB$, we have
\begin{equation}\label{eqn:SB-cty}
\lim_{\rho\to\theta} \tm(\tS_\rho) = \tm(S_\theta)
\quad\text{and}\quad
\lim_{\rho\to\theta} \tm(\tB_\rho^\alpha) = \tm(B_\theta^\alpha)
\end{equation}
for every $\alpha$, and also
\begin{equation}\label{eqn:bdry-null}
\tm(\partial \tB_\theta^\alpha) = 0.
\end{equation}
The following is an immediate consequence of Lemma \ref{lem:inj0}.

\begin{lemma}\label{lem:inj}
Let $\tv_0,p,\epsilon$ be as above, and let $\theta_1$ be as in Lemma \ref{lem:inj0}. Then for all $0<\theta\leq \theta_1$ and $\alpha \leq \frac32\epsilon$, we have $\diam \pi \tB_\theta^\alpha < 2\epsilon$.
\end{lemma}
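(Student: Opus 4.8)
The plan is to reduce to the single case $\alpha=\frac32\epsilon$ and then run exactly the compactness-plus-expansivity argument that proves Lemma \ref{lem:inj0}. First I would record the trivial monotonicity $\tB_\theta^\alpha\subseteq\tB_\theta^{3\epsilon/2}$ for every $\alpha\le\frac32\epsilon$, which is immediate from the description \eqref{eqn:B} of $\tB_\theta^\alpha$ via the condition $s(\tw)\in[0,\alpha]$. Hence $\diam\pi\tB_\theta^\alpha\le\diam\pi\tB_\theta^{3\epsilon/2}$, and it suffices to bound the latter.

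Next I would analyze the compact sets $K_\theta:=\pi\tB_\theta^{3\epsilon/2}$ as $\theta\downarrow0$. These are compact (by Lemma \ref{lem:cpt} together with continuity of $\phi^t$ and $\pi$, since $\tB_\theta^{3\epsilon/2}=\phi^{[0,3\epsilon/2]}H^{-1}(\Pa_\theta\times\Fu_\theta\times\{0\})$) and nonincreasing in $\theta$. The key step is to identify $\bigcap_{\theta>0}K_\theta$. Because the endpoint maps $\tw\mapsto\tw^\pm$ are continuous and $\Pa_\theta,\Fu_\theta$ are defined in \eqref{eqn:Cpm} through an angular neighbourhood of $\tv_0$ in $S_pX$, one has $\bigcap_\theta\Pa_\theta=\{\tv_0^-\}$ and $\bigcap_\theta\Fu_\theta=\{\tv_0^+\}$; since preimages commute with intersections, $\bigcap_\theta\tB_\theta^{3\epsilon/2}=H^{-1}\big(\{(\tv_0^-,\tv_0^+)\}\times[0,\tfrac32\epsilon]\big)$. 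Here expansivity enters precisely as in Lemma \ref{lem:inj0}: since $\tv_0\in\mathcal E$ we have $E^{-1}(\tv_0^-,\tv_0^+)=\phi^\R\tv_0$, and because $s(\tv_0)=b_{\tv_0^-}(p,p)=0$ and $s(\phi^r\tv_0)=r$ by \eqref{eqn:+t}, the constraint $s\in[0,\tfrac32\epsilon]$ cuts this orbit down to $\phi^{[0,3\epsilon/2]}\tv_0$. Projecting gives $\bigcap_\theta K_\theta=\tc_{\tv_0}([0,\tfrac32\epsilon])$, a minimizing geodesic segment of length $\tfrac32\epsilon$ (geodesics in $X$ minimize, as $M$ has no conjugate points), so $\diam\bigcap_\theta K_\theta=\tfrac32\epsilon<2\epsilon$.

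Finally I would invoke the elementary fact that a nonincreasing family of nonempty compact sets has diameters converging to the diameter of its intersection: if not, pick $\theta_n\downarrow0$ and $q_n,q_n'\in K_{\theta_n}$ with $d(q_n,q_n')$ bounded away from $\diam\bigcap_\theta K_\theta$, extract convergent subsequences inside the compact set $K_{\theta_1}$, and note both limits lie in every $K_\theta$, a contradiction. Combined with $\diam\bigcap_\theta K_\theta<2\epsilon$ this yields $\theta_1>0$ with $\diam K_\theta<2\epsilon$ for all $0<\theta\le\theta_1$, and by the first step the same $\theta_1$ works for every $\alpha\le\frac32\epsilon$. (If one insists on literally the $\theta_1$ of Lemma \ref{lem:inj0}, it is enough to have proved that lemma in the marginally stronger form that $\diam_{SX}H^{-1}(\Pa\times\Fu\times\{0\})$ is small, and then transport smallness along the flow using uniform continuity of $(t,\tw)\mapsto\pi\phi^t\tw$ on $[0,\tfrac32\epsilon]\times SX$; either route gives the result.) The only mildly delicate point is the identification $\bigcap_\theta K_\theta=\tc_{\tv_0}([0,\tfrac32\epsilon])$, and that is exactly where $\tv_0\in\mathcal E$ is used, just as in Lemma \ref{lem:inj0}; everything else is soft.
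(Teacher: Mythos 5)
Your argument is correct in substance but takes a genuinely different route from the paper, which treats the lemma as an immediate consequence of Lemma \ref{lem:inj0}: writing $\tB_\theta^\alpha$ as the union of the segments $\phi^{[0,\alpha]}\tw$ over the front face and estimating along the (globally minimizing) geodesics $\tc_{\tw}$. You instead rerun the compactness-plus-expansivity argument on the whole box, identifying $\bigcap_\theta \tB_\theta^{3\epsilon/2}$ with the orbit segment $\phi^{[0,3\epsilon/2]}\tv_0$. Your version is heavier but buys something real: it shows $\diam\pi\tB_\theta^{3\epsilon/2}\to\frac32\epsilon$, whereas the naive triangle inequality from the literal statement of Lemma \ref{lem:inj0} only gives $\frac\epsilon2+2\alpha\le\frac72\epsilon$; to reach $2\epsilon$ by the paper's route one needs $\diam\pi\phi^t H^{-1}(\Pa\times\Fu\times\{0\})<\frac\epsilon2$ uniformly for $t\in[0,\frac32\epsilon]$ (which the proof of Lemma \ref{lem:inj0} yields after a trivial modification), combined with $d(\pi\phi^{t_1}\tw,\pi\phi^{t_2}\tw)\le|t_1-t_2|\le\frac32\epsilon$. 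Two points to tighten. First, the identity $\bigcap_\theta\pi(\tB_\theta^{3\epsilon/2})=\pi\bigl(\bigcap_\theta\tB_\theta^{3\epsilon/2}\bigr)$ is the nontrivial direction of the interchange and you do need it (the free inclusion goes the wrong way for an upper bound on $\diam\bigcap_\theta K_\theta$); it follows because, for $q$ in the left-hand side, the sets $\tB_\theta^{3\epsilon/2}\cap\pi^{-1}(q)$ are nonempty, nested, and compact. Second, your threshold is not literally the $\theta_1$ of Lemma \ref{lem:inj0}, and your proposed repair via ``uniform continuity of $(t,\tw)\mapsto\pi\phi^t\tw$'' fails as stated, since the two flow times of a pair of points in the box may differ by as much as $\frac32\epsilon$; the correct repair is the uniform time-slice estimate just described. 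This mismatch is harmless for the paper, which only ever uses $\theta\le\theta_0=\min(\theta_1,\theta_2)$ and could equally shrink $\theta_0$ further, but it should be acknowledged rather than papered over.
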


The following lemma 
will be used  in Lemmas \ref{lem:b-close} and \ref{lem:scaling}; see Appendix \ref{sec:geom-pf} for the proof.

\begin{lemma}\label{lem:eps'}
Given $\tv_0,p,\epsilon>0$ as above, there exists $\theta_2>0$ such that for all $0 < \theta\leq \theta_2$, given any $\xi,\eta\in \Pa_\theta$ and any $q$ lying within $2\epsilon$ of $\pi H^{-1}(\Pa_\theta\times \Fu_\theta\times [0,\infty))$,
we have $|b_\xi(q,p) - b_\eta(q,p)| < \epsilon^2$, with a similar estimate when the roles of $\Pa_\theta$ and $\Fu_\theta$ are reversed.
\end{lemma}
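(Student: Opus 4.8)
The plan is to prove the estimate directly, combining a soft compactness argument that handles everything happening near the reference point $p$ with a quantitative estimate — based on uniform visibility and the background metric $g_0$ — controlling how much the Busemann difference can grow as $q$ runs out toward $\Fu_\theta$ along the half-tube $\pi H^{-1}(\Pa\times\Fu\times[0,\infty))=\phi^{[0,\infty)}H^{-1}(\Pa\times\Fu\times\{0\})$ (the last equality by \eqref{eqn:+t}). Two soft preliminaries are needed. First, since $\tw^-=f_p(-\tw)$, one has $\Pa_\theta=f_p(\{u\in S_pX:\measuredangle_p(u,-\tv_0)\le\theta\})$, so the $\Pa_\theta$ are compact, nested, and shrink to $\{\tv_0^-\}$ as $\theta\downarrow0$ (similarly $\Fu_\theta\downarrow\{\tv_0^+\}$); as $(\xi,\eta,y)\mapsto b_\xi(y,p)-b_\eta(y,p)$ is continuous by Corollary~\ref{cor:bus-cts} and vanishes when $\xi=\eta=\tv_0^-$, for each $R>0$ and $\delta>0$ there is $\theta>0$ with $|b_\xi(y,p)-b_\eta(y,p)|<\delta$ whenever $\xi,\eta\in\Pa_\theta$ and $d(y,p)\le R$. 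Second, for $\theta$ small every geodesic with both endpoints in $\Pa_\theta$ stays far from $p$: otherwise there are $\theta_n\downarrow0$ and $\xi_n,\eta_n\in\Pa_{\theta_n}$ with the geodesic $[\xi_n,\eta_n]$ meeting a fixed ball about $p$, and passing to the vector $u_n$ on $[\xi_n,\eta_n]$ closest to $p$ and using compactness of $SX$ over that ball yields a limit vector $u_*$ with $u_*^-=u_*^+=\tv_0^-$, which is impossible. Write $R_0(\theta)$ for the resulting lower bound on $d(p,[\xi,\eta])$, $\xi,\eta\in\Pa_\theta$; then $R_0(\theta)\to\infty$.

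Now fix $\xi,\eta\in\Pa_\theta$ and $q$ with $d(q,\pi\phi^s\tw)<2\epsilon$ for some $\tw\in H^{-1}(\Pa\times\Fu\times\{0\})$ and some $s\ge0$. Let $y_0=\pi\tw$, which lies within $\epsilon/2$ of $p$ by Lemma~\ref{lem:inj0}, and let $\gamma$ be the piecewise geodesic path that runs along $\tc_\tw$ from $y_0$ to $\pi\phi^s\tw$ and then along the geodesic segment from $\pi\phi^s\tw$ to $q$. Each $b_\zeta(\cdot,p)$ is $C^1$ with gradient at $y$ the unit vector pointing away from $\zeta$, so $\grad\big(b_\xi(\cdot,p)-b_\eta(\cdot,p)\big)$ has norm $2\sin\tfrac12\measuredangle_y(\xi,\eta)\le\measuredangle_y(\xi,\eta)$ at $y$, where $\measuredangle_y(\xi,\eta)$ is the angle at $y$ between the rays to $\xi$ and to $\eta$; integrating along $\gamma$,
\[
|b_\xi(q,p)-b_\eta(q,p)|\le|b_\xi(y_0,p)-b_\eta(y_0,p)|+\int_\gamma\measuredangle_{\gamma(\ell)}(\xi,\eta)\,d\ell .
\]
The first term is $<\tfrac13\epsilon^2$ for $\theta$ small, by the first preliminary with $R=\epsilon/2$. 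For the integrand, at each point $\tc_\tw(r)$ ($0\le r\le s$) write $\measuredangle_{\tc_\tw(r)}(\xi,\eta)\le\measuredangle_{\tc_\tw(r)}(\xi,\tw^-)+\measuredangle_{\tc_\tw(r)}(\tw^-,\eta)$ and bound each summand by the angle subtended at $\tc_\tw(r)$ by a geodesic $[\xi,\tw^-]$ or $[\eta,\tw^-]$; by uniform visibility this is small once $\tc_\tw(r)$ is far from that geodesic, and on the final leg of $\gamma$ every point is within $2\epsilon$ of $\pi\phi^s\tw$, which is far from both geodesics.

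The crux — and the only place the negatively curved background metric is genuinely used — is the claim that $d(\tc_\tw(r),[\xi,\tw^-])$ (and likewise with $\eta$) grows at least linearly in $r$, starting from a value that is large when $\theta$ is small, so that the visibility bound on $\measuredangle_{\tc_\tw(r)}(\xi,\eta)$ is integrable in $r$ with total mass $\to0$ as $\theta\downarrow0$. I would obtain this by comparison with $g_0$ via the Morse Lemma (Remark~\ref{rem:minmal}): every $g$-geodesic of $X$ is a $g_0$-quasigeodesic and hence stays within a uniform $g_0$-bounded neighborhood of the $g_0$-geodesic with the same endpoints on $\ideal$; since $\tc_\tw$ limits on $\tw^+$ while $[\xi,\tw^-]$ limits on the distinct points $\xi,\tw^-$, the exponential divergence of $g_0$-geodesics forces $d(\tc_\tw(r),[\xi,\tw^-])\gtrsim r+R_0(\theta)$, and the exponentially small $g_0$-visibility modulus, together with the bi-Lipschitz comparability of $g$- and $g_0$-angles on the compact manifold $M$, gives $\measuredangle_{\tc_\tw(r)}(\xi,\eta)\lesssim e^{-c(r+R_0(\theta))}$ for some $c>0$; hence $\int_\gamma\measuredangle_{\gamma(\ell)}(\xi,\eta)\,d\ell\lesssim(1+\epsilon)e^{-cR_0(\theta)}<\tfrac23\epsilon^2$ for $\theta$ small, which with the previous paragraph gives the bound. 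Taking $\theta_2$ small enough to enforce all of these requirements, and running the time-reversed version of the argument (with $\Pa$ and $\Fu$ interchanged) for the second assertion, finishes the proof. I expect essentially all the difficulty to lie in this last step: everything else is compactness plus the first-order variation of the Busemann function, but making the linear-divergence estimate and the $g$-versus-$g_0$ angle comparison precise and uniform over $\theta$, $\tw$, $\xi$, $\eta$ and $s$ is the only non-formal ingredient, and it is exactly where the hypotheses of the theorem are consumed.
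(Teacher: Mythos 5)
You have correctly identified where the difficulty lies, but the step you flag as ``the crux'' contains a genuine gap, and I do not believe it can be closed as written. Your integration argument needs $\int_0^\infty \measuredangle_{\tc_{\tw}(r)}(\xi,\eta)\,dr$ to be finite and small, which requires the visibility modulus of the metric $g$ to be \emph{integrable} in the distance (you in fact ask for exponential decay). But the uniform visibility axiom (Definition \ref{def:vis}) is purely qualitative --- for each angle bound there is some $L$, with no rate --- and for a general metric without conjugate points no rate is available: geodesics are only required to diverge, possibly arbitrarily slowly. Your proposed fix, importing the exponential modulus from $g_0$, does not go through: the quantity you must control is the $g$-angle at $y$ between the initial vectors of the \emph{$g$-geodesics} from $y$ to $\xi$ and $\eta$, whereas the $g_0$-visibility estimate controls the $g_0$-angle between the initial vectors of the \emph{$g_0$-geodesics} to $\xi$ and $\eta$. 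These are different curves; the Morse lemma bounds their Hausdorff distance but says nothing about their initial directions at $y$, and bi-Lipschitz equivalence of $g$ and $g_0$ only compares angles between the \emph{same} pair of tangent vectors. So the claimed bound $\measuredangle_{\tc_\tw(r)}(\xi,\eta)\lesssim e^{-c(r+R_0(\theta))}$ is unsupported, and without at least an integrable modulus the error term in your gradient integral cannot be made small.

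The paper takes a much softer route that avoids any rate entirely: Lemma \ref{lem:eps'} is deduced from the equicontinuity statement in Lemma \ref{lem:unif-cts}, whose proof extends the Gromov-product-type function $\beta_p(x,y)=d(x,p)+d(y,p)-d(x,y)$ continuously to $\bar X\times\bar X$ off the diagonal of $\partial X$, writes $b_\xi(q,p)=d(q,p)-\beta_p(q,\xi)$, and then invokes uniform continuity of $\beta_p$ on the compact set $\Pa_\theta\times\bar B$, where $\bar B$ is the closure in $\bar X$ of the $2\epsilon$-neighborhood of the half-tube; the only hypothesis that matters is that the limit points of this neighborhood on $\partial X$ lie in $\Fu_\theta$, hence away from $\Pa_\theta$. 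Your first ``soft preliminary'' is essentially this argument restricted to a bounded region; the observation to make is that, once $\beta_p$ is known to be continuous up to the boundary (via Proposition \ref{prop:conti} and Corollary \ref{cor:bus-cts}), the same compactness argument works all the way out to infinity, and no quantitative divergence or visibility estimate is needed.
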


Let $\theta_0 = \min(\theta_1,\theta_2)$; from now on we will always consider $0<\theta \leq \theta_0$ so that Lemmas \ref{lem:inj0}, \ref{lem:inj}, and \ref{lem:eps'} all hold.

Using Lemma \ref{lem:inj} and the fact that $\epsilon < \frac 14 \inj(M)$, we see that the quotient map $\pr\colon X\to M$ is injective on $\pi\tB$, and similarly $\pr_*\colon SX\to SM$ is injective on $\tB$. We will write
\begin{equation}\label{eqn:hB}
\hB = \hB_\theta^\alpha := \pr_*(\tB_\theta^\alpha)
\text{ and }
\hS = \hS_\theta := \pr_*(\tS_\theta)
\end{equation}
for the flow box and the slice in $SM$.

\section{Counting with the fundamental group}\label{sec:per-G}

\subsection{An upper counting bound}\label{sec:upper-bound}

In \S\ref{sec:|gamma|}, we described the relationship between closed geodesics and deck transformations $\gamma\in\Gamma$.
Now we return to this question, restricting our attention to closed geodesics that ``pass through'' $\hB \subset SM$ in the sense that $\dot{\hc}(s) \in \hB$ for some $s$.
In particular, we estimate $\#C(t)$ from Theorem \ref{thm:equidist} in terms of $\nu_t(\hB)$ (recall \eqref{eqn:nut}) and a certain subset of $\Gamma$.  In \S\ref{sec:mixing} we will use the scaling and mixing properties of $\hm$ to estimate the size of this subset; then in \S\ref{sec:fin} we will show that $\nu_t\to \hm$ and combine all of these estimates to complete the proof.

To describe the subset of $\Gamma$ that we work with, start by recalling from Theorem \ref{thm:equidist} that $C(t)$ is any maximal set of pairwise non-free-homotopic closed geodesics in $M$ with lengths in $(t-\epsilon,t]$, and  that  \eqref{eqn:nut} gives
\begin{equation}\label{eqn:Ct-nut}
\#C(t) = \frac{\sum_{\hc\in C(t)} \Leb_{\hc}(\hB^\alpha_\theta)}{t\nu_t(\hB^\alpha_\theta)}
\text{ for every }\alpha,\theta.
\end{equation}
By the definition of $\hB_\theta^\alpha$, every $\hv\in \hB_\theta^\alpha$ has the property that the connected component of $0$ in $\{s\in\R : \phi^s \hv \in \hB_\theta^\alpha\}$ is an interval of length $\alpha$, which begins when $\phi^s\hv \in \pr_*H^{-1}(\Pa\times\Fu\times\{0\})$. Let $\Pi(t)$ be the set of tangent vectors initiating one of these segments; that is,
\begin{equation}\label{eqn:Pit}
\Pi(t) := \{ \dot{\hc}(s) \in \pr_*H^{-1}(\Pa\times\Fu\times\{0\}) : \hc\in C(t), s\in \R \}.
\end{equation}
The set $\Pi(t)$ is finite and does not depend on $\alpha$. From \eqref{eqn:Ct-nut} we get
\begin{equation}\label{eqn:Ct-nut-1}
\#C(t) = \frac{\alpha}t \frac{ \#\Pi(t)}{ \nu_t(\hB_\theta^\alpha)}.
\end{equation}
We estimate $\#\Pi(t)$ by associating to each $\hv\in \Pi(t)$ the axial isometry of an appropriate lift.


\begin{definition}\label{def:Theta}
Given $\hv\in \Pi(t)$, let $\ell=\ell(\hv) \in (t-\epsilon,t]$ be such that $\phi^\ell \hv = \hv$, and let $\tv$ be the unique lift of $\hv$ such that $\tv\in \tB_\theta^\alpha$.  Define $\Theta(\hv) \in \Gamma$ to be the axial isometry of $(c_{\tv},\ell)$; that is, the
unique isometry of $X$ such that $\phi^\ell\tv = \Theta(\hv)_* \tv$. Observe that $|\Theta(\hv)| = \ell$.
\end{definition}

Observe that for each $\hv\in \Pi(t)$ and $\gamma = \Theta(\hv)$, we have $\phi^t\tv = \phi^{t-\ell} \gamma_* \tv \in \gamma_* \tB_\theta^\epsilon$, and so $\tv \in \tS_\theta \cap \phi^{-t} \gamma_* \tB_\theta^\epsilon$.
With this in mind, we define for each $t>0$ and $\alpha \in (0,\frac 32 \epsilon]$ the set
\begin{equation}\label{eqn:Gt}
\Gamma(t,\alpha) 
= \Gamma_\theta(t,\alpha) :=
\{\gamma\in \Gamma : \tS_\theta \cap \phi^{-t} \gamma_* \tB_\theta^\alpha \neq \emptyset \}
\end{equation}
and observe that $\Theta(\Pi(t)) \subset \Gamma(t,\epsilon)$.
To relate $\#\Pi(t)$ and $\#\Gamma(t,\epsilon)$, we need to control the multiplicity of $\Theta$. 

In some instances, such as when $M$ is an oriented surface, it can be shown that $\Theta$ is injective.
In general, injectivity may fail: for example, if $\beta$ acts as translation once around a M\"obius strip whose central circle has length $t/2$, then the boundary circle of length $t$ will correspond to two elements of $\Pi(t)$, and $\Theta$ maps each of these to $\beta^2$.

\begin{definition}\label{def:dg}
Given $\gamma\in \Gamma$, let $d = d(\gamma)\in \N$ be maximal such that $\gamma = \beta^d$ for some $\beta\in\Gamma$.
\end{definition}

\begin{lemma}\label{lem:kind-of-injective}
For every $t$ and $\gamma$, we have $\#\Theta^{-1}(\gamma) \leq d(\gamma)$.
\end{lemma}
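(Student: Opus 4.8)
The plan is to bound the number of elements $\hv\in\Pi(t)$ that $\Theta$ maps to a fixed $\gamma\in\Gamma$ by analyzing the geometry of the axes involved. Suppose $\Theta(\hv_1)=\Theta(\hv_2)=\gamma$ with $\hv_1,\hv_2\in\Pi(t)$. By Definition \ref{def:Theta}, the unique lifts $\tv_1,\tv_2\in\tB_\theta^\alpha$ satisfy $\phi^{\ell_i}\tv_i=\gamma_*\tv_i$ where $\ell_i=|\gamma|$ (forced by Lemma \ref{lem:ax} since $|\Theta(\hv_i)|=\ell_i$ and $\ell_i=|\gamma|$). Thus $\tc_{\tv_1}$ and $\tc_{\tv_2}$ are both axes of $\gamma$, and by Lemma \ref{lem:endfix} both have endpoints $\xi_\gamma^-$ and $\xi_\gamma^+$. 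Write $\tc=\tc_{\tv_1}$; I would first check that $\tv_2$ lies on this same geodesic $\tc$. The point is that $\tv_2$ is tangent to \emph{some} axis of $\gamma$ with the same pair of endpoints, but in our no-conjugate-points setting distinct geodesics may share endpoints. However, $\gamma$ is an axial isometry for $\tc_{\tv_2}$, and the standard fact (used implicitly via Lemma \ref{lem:ax} and the uniqueness of the minimizing configuration) is that all axes of $\gamma$ realizing length $|\gamma|$ are translates under the horospherical structure; more concretely, I would argue that $\hv_1$ and $\hv_2$ lie on the \emph{same} closed geodesic $\hc$ on $M$: since $\Theta(\hv_i)=\gamma$ means $\hv_i$ lies on a closed geodesic whose lift is an axis of $\gamma$, and by Lemma \ref{lem:2-axes} any two axes of $\gamma$ project to closed geodesics in the same free homotopy class with the same length $|\gamma|$; but $C(t)$ contains pairwise non-free-homotopic geodesics, so $\hc$ is the \emph{unique} element of $C(t)$ whose lift is an axis of $\gamma$. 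Hence $\hv_1,\hv_2\in\dot{\hc}(\R)\cap\pr_*H^{-1}(\Pa\times\Fu\times\{0\})$.

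Next I would count how many points $\dot{\hc}(s)$ can lie in $\pr_*H^{-1}(\Pa\times\Fu\times\{0\})$. These are the tangent vectors $\hv$ along $\hc$ with $\hv^-\in\Pa$, $\hv^+\in\Fu$, and $s(\tv)=0$ for the appropriate lift $\tv\in\tB_\theta^\alpha$. Now $\hc$ has length $\ell=|\gamma|\in(t-\epsilon,t]$, and the function $s\mapsto s(\phi^s\tv)=s(\tv)+s$ is strictly increasing along any lift by \eqref{eqn:+t}. Consider one fixed lift $\tc$ of $\hc$ which is an axis of $\gamma$. The condition $s(\phi^s\tv)=0$ picks out, within any fundamental-domain-length interval of $s$, exactly one value (by strict monotonicity of $s\mapsto s+\text{const}$ and periodicity of the other conditions under $\gamma$). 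But how many distinct tangent vectors $\hv=\dot{\hc}(s)$ downstairs satisfy this, and whose lift happens to lie in $\tB_\theta^\alpha$ rather than some translate? The geodesic $\hc$ closes up after length $\ell$, but $\gamma=\beta^{d}$ for $\beta\in\Gamma$ with $d=d(\gamma)$ maximal, and $\beta$ has length $\ell/d$ (or $\beta$ corresponds to a shorter closed geodesic that $\hc$ wraps around $d$ times, up to the Möbius subtlety noted in the paper). Along each $\beta$-period of length $\ell/d$, the vector $s(\tv)$ increases by $\ell/d$, and since $\beta$ fixes $\xi_\gamma^\pm$ hence preserves $\Pa,\Fu$, the set of $s\in[0,\ell]$ with $\dot{\hc}(s)\in\pr_*H^{-1}(\Pa\times\Fu\times\{0\})$ is invariant under $s\mapsto s+\ell/d$; within each such subinterval there is at most one solution (by strict monotonicity $s(\phi^s\tv)=0$ has a unique root once the $\Pa\times\Fu$ membership is granted — actually I must be careful: membership in $\Pa\times\Fu$ is automatic since $\hv^\pm=\xi_\gamma^\mp$... wait, all tangent vectors along $\tc$ have the same endpoints $\xi_\gamma^\pm\in\Pa\times\Fu$ by construction, so only the slice condition $s=0$ matters). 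Thus there are exactly $d=d(\gamma)$ such segments, giving $\#\Theta^{-1}(\gamma)\le d(\gamma)$.

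So the clean argument is: each $\hv\in\Theta^{-1}(\gamma)$ lies on the unique $\hc\in C(t)$ whose lift is an axis of $\gamma$, it corresponds to a choice of $s\in\R/\ell\Z$ with $\dot{\hc}(s)\in\pr_*H^{-1}(\Pa\times\Fu\times\{0\})$, and this last set, pulled back to the axis $\tc$, is a $\gamma$-invariant subset of $\R$ on which $s(\cdot)=0$; since $\gamma=\beta^d$ acts with the primitive generator $\beta$ translating by $\ell/d$ along $\tc$ while preserving all the defining conditions, the quotient set has at most $d=d(\gamma)$ points — one per $\beta$-orbit — because the condition $b_{\tv^-}(\pi\tv,p)=0$ together with strict monotonicity of $t\mapsto b_{\tv^-}(\tc(t),p)$ along $\tc$ (which equals $s(\tv)+t$) has a unique root in each interval $[a,a+\ell/d)$. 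Therefore $\#\Theta^{-1}(\gamma)\le d(\gamma)$.

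The main obstacle is the first step: verifying that all vectors in $\Theta^{-1}(\gamma)$ lie on a \emph{single} closed geodesic $\hc$, i.e., ruling out that $\gamma$ has two axes projecting to distinct non-free-homotopic closed geodesics in $C(t)$. This is handled by Lemma \ref{lem:2-axes} (any two axes of $\gamma$ project to the same free homotopy class) combined with the fact that $C(t)$ is chosen to be pairwise non-free-homotopic, so at most one $\hc\in C(t)$ has a lift that is an axis of $\gamma$. A secondary subtlety is the precise relationship between $\gamma=\beta^d$ and the geometry of $\tc$ — in particular that the primitive $\beta$ translates along the axis by $\ell/d$ and preserves $\Pa\times\Fu\times\{0\}$; the Möbius-strip example in the paper shows $\hc$ itself may not be primitive even when $\beta$ is not an axial isometry of $\tc$, but $\beta$ still fixes $\xi_\gamma^\pm$ by Lemma \ref{lem:Preissman} and hence preserves the relevant sets, which is all that is needed.
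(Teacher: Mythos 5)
Your first step --- reducing to a single closed geodesic $\hc$ via Lemma~\ref{lem:2-axes} and the pairwise non-free-homotopic choice of $C(t)$ --- is correct and is exactly how the paper begins. The gap is in the counting step. You pull the set $\{s : \dot{\hc}(s)\in\Theta^{-1}(\gamma)\}$ back to a \emph{single} axis $\tc$ and argue that the slice condition $s(\cdot)=0$, being strictly monotone along $\tc$ (slope $1$ by \eqref{eqn:+t}), has ``a unique root in each interval $[a,a+\ell/d)$.'' But along one fixed lift that function has a unique root in all of $\R$, so this reasoning would yield the bound $1$, not $d(\gamma)$. The extra points of $\Theta^{-1}(\gamma)$ do not come from additional roots on $\tc$; they come from the \emph{other} lifts of $\hc$ that are also axes of $\gamma$, namely $\beta_*^k\tc$ for $k=0,\dots,d(\gamma)-1$, each of which carries exactly one point of $H^{-1}(\Pa\times\Fu\times\{0\})$. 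Relatedly, your premise that ``the primitive generator $\beta$ translates by $\ell/d$ along $\tc$'' is false precisely in the only case where the multiplicity can exceed $1$: in the M\"obius-type situation $\beta$ fixes $\xi_\gamma^\pm$ but maps $\tc$ to a \emph{different} geodesic $\beta_*\tc$ with the same endpoints (if $\beta$ did translate along $\tc$, the $d$ parameter values $s_1-k\ell/d$ would all project to the same tangent vector and the count would again be $1$). So the argument as written never actually accounts for the $d-1$ additional points.

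The fix is to organize the count around deck transformations rather than roots on one axis, which is what the paper does: fix $\hv\in\Theta^{-1}(\gamma)$, write any other $\hw\in\Theta^{-1}(\gamma)$ as $\phi^s\hv$ with $s\in(0,|\gamma|]$, and lift to obtain $\tau\in\Gamma$ with $\phi^s\tv=\tau_*\tw$. One checks that $\tau$ fixes $\tw^\pm=\xi_\gamma^\pm$, so $\tau=\beta^k$ with $k\geq 1$ by Lemma~\ref{lem:Preissman}; the bound $k\leq d(\gamma)$ then requires a genuine geometric input you omit entirely, namely $k|\beta|=|\tau|\leq d(\gamma)|\beta|+\diam\pi\tB$ together with $\diam\pi\tB<\tfrac\epsilon2<|\beta|$ from Lemma~\ref{lem:inj0} and Lemma~\ref{lem:ax}. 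Since $\hw$ is determined by $\tau$, there are at most $d(\gamma)$ choices. Your sketch gestures at the right objects but, as it stands, both the monotonicity count and the ``$\beta$ translates along $\tc$'' claim fail in the case the lemma is designed to handle.
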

\begin{proof}
Suppose $\hv,\hw\in \Pi(t)$ have $\Theta(\hv) = \Theta(\hw) =: \gamma$. Then $\tc_{\tv}$ and $\tc_{\tw}$ are both axes of $\gamma$. By Lemma \ref{lem:Preissman}, the set of isometries fixing the endpoints of $\tc_{\tv}$ and $\tc_{\tw}$ is an infinite cyclic group; let $\beta$ be a generator of this group such that $\gamma = \beta^{d(\gamma)}$.

Lemma \ref{lem:2-axes} implies that $\hc_{\hv}$ and $\hc_{\hw}$ are free-homotopic as curves of length $|\gamma|$.
Since the elements of $C(t)$ are pairwise non-free-homotopic, we conclude that $\hc_{\hv}$ and $\hc_{\hw}$ are reparametrizations of the same closed geodesic; in other words, there is $s\in (0,|\gamma|]$ such that $\phi^s(\hv) = \hw$.
Lifting gives $\tau\in\Gamma$ such that $\phi^s\tv = \tau_* \tw$, and thus
\[
\tau \tw^+ = \lim_{r\to\infty} \tau \pi \phi^r \tw = 
\lim_{r\to\infty} \pi \phi^{r+s} \tv = \tv^+ = \tw^+.
\]
Similarly, $\tau$ fixes $\tw^-$, so $\tau = \beta^k$ for some $k\geq 1$ (it must be positive since $s>0$), and moreover $k\leq d(\gamma)$ because
\begin{align*}
k|\beta| &= |\tau| = d(\pi(\phi^s(\tv)),\pi \tw)  \\
&\leq d(\pi(\phi^s(\tv)), \pi \tv) + d(\pi \tv, \pi \tw) \leq d(\gamma) |\beta| + \tfrac \epsilon2
\leq (d(\gamma) + \tfrac 12) |\beta|,
\end{align*}
where the last two inequalities use Lemma \ref{lem:inj0} and the fact that $\epsilon \leq \frac 14 \inj(M) < |\beta|$ (by Lemma \ref{lem:ax}).
Fixing $\hv$ we see that $\hw$ is uniquely determined by $\tau$, and since there are at most $d(\gamma)$ choices for $\tau$, this proves the lemma.
\end{proof}

From Lemma \ref{lem:kind-of-injective}, we deduce that
\begin{equation}\label{eqn:upper-Pi}
\#\Pi(t) \leq
\#\Gamma(t,\epsilon) + \sum_{\substack{\gamma\in \Theta(\Pi(t)) \\ d(\gamma)\geq 2}} d(\gamma).
\end{equation}
To use \eqref{eqn:Ct-nut-1} and \eqref{eqn:upper-Pi} to estimate $\#C(t)$ in terms of $\#\Gamma(t,\epsilon)$, we need to estimate how many $\gamma\in \Theta(\Pi(t))$ have larger values of $d(\gamma)$. 

\begin{definition}
Given disjoint closed sets $\mathbf{Q},\mathbf{R} \subset\ideal$, consider for each $d\geq 2$ and $t>0$ the set
\begin{multline}\label{eqn:G0}
\Gamma_d(\mathbf{Q},\mathbf{R},t)
:= \{\gamma\in \Gamma : \xi_\gamma^- \in \mathbf{Q},\ \xi_\gamma^+ \in \mathbf{R}, \\ d(\gamma)\geq d, \text{ and } |\gamma| \in (t-\epsilon,t] \},
\end{multline}
where $\xi_\gamma^\pm$ are as in Definition \ref{def:gamma-pm}.
\end{definition}

Observe that $\{\gamma \in \Theta(\Pi(t)) : d(\gamma) \geq 2\} \subset \Gamma_2(\Pa,\Fu,t)$.

\begin{lemma}\label{lem:some-mult-small}
Given any disjoint closed sets $\mathbf{Q},\mathbf{R}\subset \ideal$, there is $K>0$ such that for all $\epsilon \in (0,\inj M]$ and $t>0$ we have
\[
\sum_{\gamma \in \Gamma_2(\mathbf{Q},\mathbf{R},t)} d(\gamma) \leq K e^{\frac 23 ht}.
\]
\end{lemma}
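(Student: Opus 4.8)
The plan is to bound $\#\Gamma_2(\mathbf{Q},\mathbf{R},t)$ — and more precisely the weighted sum $\sum d(\gamma)$ — by partitioning according to the value $d = d(\gamma)$ and by the ``primitive root.'' If $\gamma \in \Gamma_2(\mathbf{Q},\mathbf{R},t)$ with $d(\gamma) = d$, write $\gamma = \beta^d$ with $\beta$ primitive; then $|\beta| = |\gamma|/d \in ((t-\epsilon)/d, t/d]$, and $\beta$ fixes the same pair of endpoints $\xi_\gamma^\pm \in \mathbf{Q}\times\mathbf{R}$ as $\gamma$ (its axis is an axis of $\gamma$), so $\xi_\beta^- = \xi_\gamma^-\in\mathbf Q$ and $\xi_\beta^+ = \xi_\gamma^+\in\mathbf R$. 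By Lemma \ref{lem:Preissman} the stabilizer of this pair of endpoints is infinite cyclic, so $\beta$ is determined up to finitely many choices by $\gamma$ (in fact $\beta$ is the generator on the appropriate side); conversely each primitive $\beta$ gives rise to at most one $\gamma = \beta^d$ for each fixed $d$. Hence $\sum_{\gamma\in\Gamma_2} d(\gamma) \leq \sum_{d\geq 2} d\cdot \#\{\beta \text{ primitive}: \xi_\beta^-\in\mathbf Q,\ \xi_\beta^+\in\mathbf R,\ |\beta| \leq t/d\}$, where I have relaxed the length constraint to $|\beta|\le t/d$ for simplicity (and there is a lower bound $|\beta| \ge 2\inj M$ from Lemma \ref{lem:ax}, which forces $d \le t/(2\inj M)$, so the sum over $d$ is finite).

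The next step is to control $N(s) := \#\{\beta\in\Gamma : \xi_\beta^-\in\mathbf Q,\ \xi_\beta^+\in\mathbf R,\ |\beta|\le s\}$, the number of deck transformations with length at most $s$ whose axis joins $\mathbf{Q}$ to $\mathbf{R}$. This is exactly the kind of quantity bounded in the prior work on counting in the rank-one / visibility setting: by Lemma \ref{lem:ax} each such $\beta$ has an axis $\tc_\beta$ with $\tc_\beta(\pm|\beta|) = \beta^{\pm1}\tc_\beta(0)$, and distinct $\beta$'s (even with the same axis endpoints, by Lemma \ref{lem:Preissman}) have distinct displacement lengths differing by at least $|\beta_0| \ge 2\inj M$ where $\beta_0$ generates the stabilizer; so for a fixed pair of endpoints there are at most $O(s)$ elements, and summing over the (boundedly many, by a packing/visibility argument using that $\mathbf Q,\mathbf R$ are disjoint and the uniform visibility condition) axis-endpoint configurations joining $\mathbf Q$ to $\mathbf R$ with a point within bounded distance of $p$ gives $N(s) \le C e^{hs}$ — this is the uniform upper counting bound \eqref{eqn:unif} type estimate, which holds in this setting by \cite{K1} combined with Manning's theorem \cite{Man} identifying $h$ with the volume growth rate in $X$, or directly from the closing-lemma/orbit-counting machinery of \cite{CKW}. (More carefully: one uses that the orbit $\Gamma q$ has exponential growth rate exactly $h$, and that each $\beta$ with $|\beta|\le s$ and $q\in X$ realizing $|\beta|$ within a fixed fundamental domain sends $q$ into the ball $B(q, s + \mathrm{diam})$, whose $\Gamma$-orbit count is $\le Ce^{hs}$.)

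Combining the two steps gives
\[
\sum_{\gamma\in\Gamma_2(\mathbf Q,\mathbf R,t)} d(\gamma) \;\le\; \sum_{d\ge 2} d\cdot C e^{h t/d} \;\le\; C\sum_{d\ge 2} d\, e^{ht/2}\cdot e^{ht(1/d - 1/2)}.
\]
For $d = 2$ the exponent is exactly $ht/2 < \frac23 ht$; for $d\ge 3$ we have $e^{ht/d} \le e^{ht/3}$, and the number of terms is $O(t)$ (since $d \le t/(2\inj M)$), each of size $O(t e^{ht/3})$, so the total over $d\ge 3$ is $O(t^2 e^{ht/3})$, which is $o(e^{\frac 23 ht})$. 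Hence the whole sum is $\le K e^{\frac23 ht}$ for a suitable constant $K$ depending only on $\mathbf Q,\mathbf R$ (and the fixed geometric data), uniformly in $\epsilon \in (0,\inj M]$ since the constant $C$ in the counting bound $N(s)\le Ce^{hs}$ does not depend on $\epsilon$. The main obstacle is establishing the uniform bound $N(s)\le Ce^{hs}$ with a constant independent of the pair $(\mathbf Q,\mathbf R)$ beyond their fixed choice and of $\epsilon$; this requires either invoking the $t/$-denominator-free upper bound from \cite{K1,K4} (which counts $\{\gamma : |\gamma|\le s\}$) and restricting to those with endpoints in $\mathbf Q\times\mathbf R$, or reproving a version of it in the present visibility-without-conjugate-points setting using the volume growth estimate of Manning and the Morse lemma comparison with the background metric $g_0$ — but since \eqref{eqn:unif} (or at least its upper half) is available in this generality, this should go through directly.
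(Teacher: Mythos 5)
Your decomposition is the same one the paper uses: partition $\Gamma_2(\mathbf{Q},\mathbf{R},t)$ by the value of $d(\gamma)$, pass to the primitive root $\beta$ with $|\beta|\le t/d$ (injectively, via Lemma \ref{lem:Preissman}), count the possible $\beta$ by an orbit-growth argument in $X$, and observe that the $d=2$ term dominates with exponent $\approx ht/2 < \tfrac23 ht$ while the $d\ge3$ terms contribute only $O(t^2 e^{ht/3})$. That skeleton is correct. The one place where you create unnecessary trouble for yourself is the counting input: you insist on the sharp bound $N(s)\le Ce^{hs}$, flag it as ``the main obstacle,'' and propose to import it from the conjugacy-class bounds \eqref{eqn:unif} of \cite{K1,K4} (which count free homotopy classes, not group elements, so they do not directly give what you want) or to reprove it. But no such sharp bound is needed: since $\tfrac12(h+\delta)<\tfrac23 h$ for any $\delta<h/3$, it suffices to know $\#\{\beta : d(q,\beta q)\le s\}\le C_\delta e^{(h+\delta)s}$ for one such $\delta$, and this follows immediately from the Freire--Ma\~n\'e/Manning identification of $h$ with $\lim_r \frac1r\log\vol B(q,r)$ together with the packing argument you yourself sketch in your ``more carefully'' parenthetical: the balls $B(\beta q,\inj M)$ are pairwise disjoint and sit inside $B(q,\,2\diam A + t/d)$, where $A=\pi H^{-1}(\mathbf{Q}\times\mathbf{R}\times\{0\})$ is bounded by Lemma \ref{lem:cpt}. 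This is exactly the paper's proof, and it makes the ``obstacle'' disappear.

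Two smaller points. First, the sentence claiming that for fixed endpoints there are $O(s)$ elements and that there are ``boundedly many'' axis-endpoint configurations is internally inconsistent (it would give $N(s)=O(s)$, not $Ce^{hs}$) and should be deleted in favor of the packing argument. Second, note that your ``more careful'' argument as stated only yields exponent $h+\delta$, not $h$, since exponential volume growth rate $h$ gives $\vol B(q,r)\le C_\delta e^{(h+\delta)r}$ but not $\le Ce^{hr}$; again this is harmless once you accept that $\delta<h/3$ suffices, which is precisely why the lemma is stated with the slack exponent $\tfrac23 ht$.
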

\begin{proof}
Let $A := \pi H^{-1}(\mathbf{Q}\times\mathbf{R}\times\{0\})$; note that $\diam A <\infty$ by Lemma \ref{lem:cpt}. Fix $q\in A$.
Freire and Ma\~n\'e proved in \cite{FM} that $h = \lim_{r\to\infty} \frac 1r \log \vol(B(q,r))$, where $B(q,r)$ is the ball of radius $r$ centered at $q$ in $X$. Thus fixing $\delta \in (0,h/3)$, there is $K_0>0$ such that
\begin{equation}\label{eqn:Freire-Mane}
\vol(B(q,r)) \leq K_0 e^{(h+\delta)r} \text{ for all } r>0.
\end{equation}
Given $d\geq 2$ and $\gamma\in \Gamma_d(\mathbf{Q},\mathbf{R},t)$, write $\gamma = \beta(\gamma)^{d(\gamma)}$.
Every axis of $\beta(\gamma)$ is an axis of $\gamma$, and thus has endpoints $\xi_\gamma^\pm$ by Lemma \ref{lem:endfix}.  In particular, we can choose $\tv\in H^{-1}(\mathbf{Q}\times\mathbf{R}\times\{0\})$ tangent to such an axis, and observe that $\phi^{|\beta(\gamma)|} \tv = \beta(\gamma)_*\tv$, so that
\begin{equation}\label{eqn:dpbp}
\begin{aligned}
d(q,\beta(\gamma) q) &\leq d(q,\pi\tv) + d(\pi\tv,\pi\beta(\gamma)_*\tv) + d(\pi\beta(\gamma)_*\tv,\beta(\gamma) q)\\
&\leq 2d(q,\pi\tv) + d(\pi v, \pi\phi^{|\beta(\gamma)|} v) \\
&\leq 2\diam A + |\beta(\gamma)|.
\end{aligned}
\end{equation}
Observe that as $\gamma$ ranges over $\Gamma_d(\mathbf{Q}, \mathbf{R},t)$,
the sets $B(\beta(\gamma)q,\inj M)$ all have the same volume $V$ (since deck transformations act isometrically) and are all disjoint because $\gamma\mapsto\beta(\gamma)$ is injective (this uses our choice of $\epsilon$ and the fact that $|\beta| \geq 2\inj(M) > \epsilon$ for all $\beta\neq\id$). 
Thus their union has volume $V \cdot \# \Gamma_d(\mathbf{Q},\mathbf{R},t)$. Since this union is contained in $B(q,2\diam A + \tfrac td)$, we can use \eqref{eqn:Freire-Mane} to deduce that
\[
V \cdot \# \Gamma_d(\mathbf{Q},\mathbf{R},t)\leq \vol(B(q,2\diam A + \tfrac td))
\leq K_0 e^{(h+\delta)(\diam A + \frac td)}.
\]
Writing $K_1 = V^{-1} K_0 e^{(h+\delta)\diam A}$, we have
\begin{equation}\label{eqn:Gd}
\# \Gamma_d(\mathbf{Q},\mathbf{R},t)
\leq K_1 e^{(h+\delta) t/d}.
\end{equation}
Observe that $d(\gamma) = |\gamma|/|\beta(\gamma)| \leq |\gamma|/\inj(M)$ by Lemma \ref{lem:ax}, so
\[
\sum_{\gamma\in \Gamma_2(\mathbf{Q},\mathbf{R},t)} d(\gamma)
= 2\#\Gamma_2(\mathbf{Q},\mathbf{R},t) +
\sum_{3\leq d \leq t/\inj(M)} \Gamma_d(\mathbf{Q},\mathbf{R},t).
\]
From \eqref{eqn:Gd} we see that $\#\Gamma_d(\mathbf{Q},\mathbf{R},t) \leq K_1 e^{(h+\delta)t/2}$ for each $d\geq 2$, and thus
\[
\sum_{\gamma\in \Gamma_2(\mathbf{Q},\mathbf{R},t)} d(\gamma) \leq 
\frac{tK_1}{\inj(M)} e^{(h+\delta) t/2}.
\]
Since $(h+\delta)/2 < 2h/3$, this proves the lemma.
\end{proof}

Combining Lemma \ref{lem:some-mult-small} with \eqref{eqn:Ct-nut-1} and \eqref{eqn:upper-Pi} gives
\begin{equation}\label{eqn:upper-P}
\#C(t)
\leq \frac{\epsilon}{t\nu_t(\hB^\epsilon)}
\big( \#\Gamma(t,\epsilon) + K e^{\frac 23 ht} \big).
\end{equation}

\begin{remark}\label{rmk:apathy}
Once we show that $\lim_{t\to\infty} \frac 1t \log \#\Gamma(t,\epsilon) = h$, it will follow that the last term in \eqref{eqn:upper-P} does not affect the asymptotics of $\#C(t)$. This same argument shows that the validity of \eqref{eqn:margulis} is not affected by whether we count all homotopy classes or only primitive ones; the number of nonprimitive ones is of a lower exponential order, and thus its contribution to the ratio of interest vanishes in the limit.
\end{remark}

\subsection{A type of closing lemma}

We will eventually complement the upper bound in \eqref{eqn:upper-P} by getting a lower bound for $\#C(t)$ in \S\ref{sec:lower-bound}, but first we need a way to guarantee that the intersection $\tS\cap \phi^{-t} \gamma_* \tB_\theta^\alpha$ actually contains a periodic orbit.

Recall from Lemma \ref{lem:ax} that every $\gamma\in \Gamma\setminus\id$ has an axis $\tc$, which projects to a closed geodesic on $M$. To produce a set of $\gamma$ for which this axis passes through $\tB$, we need a condition guaranteeing that $\tc(-\infty) \in \Pa$ and $\tc(+\infty) \in \Fu$, since then the tangents to $\tc$ will pass through $\tB$. To this end we consider
\begin{equation}\label{eqn:G*}
\Gamma^* = \Gamma^*_\theta := \{\gamma\in\Gamma :
\gamma \Fu_\theta \subset \Fu_\theta \text{ and } \gamma^{-1} \Pa_\theta \subset \Pa_\theta \}.
\end{equation}

\begin{lemma}\label{lem:closing-0}
Given any $\gamma\in \Gamma^*$, there exists an axis $\tc$ for $\gamma$ such that $\tc(-\infty) \in \Pa$ and $\tc(+\infty) \in \Fu$.
\end{lemma}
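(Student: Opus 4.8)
The plan is to use the fact that $\gamma$ attracts $\mathbf{F}_\theta$ into itself and $\gamma^{-1}$ attracts $\mathbf{P}_\theta$ into itself, in order to locate the fixed points $\xi_\gamma^\pm$ inside these sets, and then invoke Lemma \ref{lem:endfix}. First I would recall from Lemma \ref{lem:ax} that $\gamma$ (being nontrivial, since $\id \notin \Gamma^*$ — or else the statement is vacuous) has an axis, and from Lemma \ref{lem:endfix} that any axis $\tc$ of $\gamma$ satisfies $\tc(-\infty) = \xi_\gamma^-$ and $\tc(\infty) = \xi_\gamma^+$. So it suffices to prove that $\xi_\gamma^+ \in \mathbf{F}_\theta$ and $\xi_\gamma^- \in \mathbf{P}_\theta$. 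For the first: since $\mathbf{F}_\theta$ is closed (it is $f_p$ of a closed spherical cap in $S_pX$, hence compact) and $\gamma \mathbf{F}_\theta \subset \mathbf{F}_\theta$, the nested intersection $\bigcap_{n\geq 0} \gamma^n \mathbf{F}_\theta$ is a nonempty closed $\gamma$-invariant subset of $\mathbf{F}_\theta$. Now I would use that $\xi_\gamma^+$ is the \emph{attracting} fixed point: pick any $\eta \in \mathbf{F}_\theta$ with $\eta \neq \xi_\gamma^-$ (possible since $\mathbf{F}_\theta$ is infinite, or at worst one removes a single point), and then $\gamma^n \eta \to \xi_\gamma^+$ in the cone topology; since each $\gamma^n\eta \in \mathbf{F}_\theta$ and $\mathbf{F}_\theta$ is closed, $\xi_\gamma^+ \in \mathbf{F}_\theta$. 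Symmetrically, applying the same reasoning to $\gamma^{-1}$ — whose attracting fixed point is $\xi_\gamma^-$ and which satisfies $\gamma^{-1}\mathbf{P}_\theta \subset \mathbf{P}_\theta$ — gives $\xi_\gamma^- \in \mathbf{P}_\theta$. One must make sure $\mathbf{P}_\theta$ does not consist of the single point $\xi_\gamma^+$; but $\mathbf{P}_\theta = \{\tw^- : \measuredangle_p(\tw,\tv_0)\le\theta\}$ has nonempty interior in $\ideal$ (it is the image under the homeomorphism $f_p \circ (\text{reversal})$ of a cap of positive angular radius), so it certainly contains points other than any prescribed one.

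With $\xi_\gamma^\pm$ located in $\mathbf{P}_\theta, \mathbf{F}_\theta$ respectively, I would finish by taking $\tc$ to be \emph{any} axis of $\gamma$ — Lemma \ref{lem:ax} guarantees one exists — and Lemma \ref{lem:endfix} immediately gives $\tc(-\infty) = \xi_\gamma^- \in \mathbf{P}$ and $\tc(\infty) = \xi_\gamma^+ \in \mathbf{F}$, which is exactly the claim.

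The main obstacle I anticipate is the soft-dynamics step: justifying that $\gamma^n \eta \to \xi_\gamma^+$ for $\eta$ ranging over a nontrivial subset of $\ideal$. In the negatively curved background metric $g_0$ this is the standard north–south dynamics of a hyperbolic isometry on the Gromov boundary (Preissman-type behavior), and Remark \ref{rem:minmal} supplies a $\Gamma$-equivariant topological conjugacy between $\partial X$ for $g$ and for $g_0$; so the convergence transfers to our setting. The only care needed is to track which point is attracting under this conjugacy — but that is precisely how $\xi_\gamma^\pm$ were defined in Definition \ref{def:gamma-pm} (via the conjugacy), so the labels are consistent by construction. An alternative, avoiding even this, is to argue directly: $\bigcap_n \gamma^n \mathbf{F}_\theta$ is a nonempty closed $\gamma$-invariant set, hence contains a $\gamma$-fixed point (a nonempty compact invariant subset of the circle $\ideal$ under a homeomorphism with exactly two fixed points must meet $\{\xi_\gamma^-,\xi_\gamma^+\}$), and it cannot be $\{\xi_\gamma^-\}$ since $\mathbf{F}_\theta$ has nonempty interior and $\gamma^n$ pushes interior points toward $\xi_\gamma^+$; so $\xi_\gamma^+ \in \mathbf{F}_\theta$. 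Either route is short; I would present the direct one to keep the dependence on the Morse-lemma conjugacy minimal.
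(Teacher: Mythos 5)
Your proposal is correct, and it reaches the conclusion by a slightly different mechanism than the paper. The paper's proof is three sentences: since $\gamma\Fu_\theta\subset\Fu_\theta$ and $\gamma^{-1}\Pa_\theta\subset\Pa_\theta$, and each of these sets is homeomorphic to a closed ball, the \emph{Brouwer fixed point theorem} produces one fixed point of $\gamma$ in each; Lemma \ref{lem:ax} gives an axis and Lemma \ref{lem:endfix} identifies its endpoints with the fixed points. You instead produce the fixed points by iteration: $\gamma^n\eta\to\xi_\gamma^+$ for $\eta\in\Fu_\theta\setminus\{\xi_\gamma^-\}$, with each iterate trapped in the closed set $\Fu_\theta$, using the north--south dynamics inherited from the $g_0$-boundary via the Morse-lemma conjugacy of Remark \ref{rem:minmal}. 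Both routes are short and valid, but yours buys something the paper's does not make explicit: Brouwer alone yields \emph{a} fixed point in $\Fu_\theta$ and \emph{a} fixed point in $\Pa_\theta$ without saying which is attracting, whereas the conclusion $\tc(-\infty)\in\Pa$, $\tc(+\infty)\in\Fu$ requires knowing that the \emph{attracting} point $\xi_\gamma^+$ is the one in $\Fu_\theta$ (an axis of $\gamma$, as opposed to one of $\gamma^{-1}$, necessarily runs from $\xi_\gamma^-$ to $\xi_\gamma^+$). Your argument pins this down directly; to extract it from the Brouwer version one would still invoke the same north--south dynamics afterwards. Two small remarks: your parenthetical that $\id\notin\Gamma^*$ is not quite right ($\id$ trivially satisfies \eqref{eqn:G*}), though as you say the statement is vacuous in that case; and in your ``direct'' alternative the phrase ``subset of the circle'' is dimension-specific, while the iteration argument itself works for $\ideal\cong S^{n-1}$ in any dimension, so you should prefer the first formulation.
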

\begin{proof}
By the Brouwer fixed point theorem, $\gamma$ has one fixed point in $\Pa$ and one in $\Fu$. By Lemma \ref{lem:ax}, it has an axis $\tc$. By Lemma \ref{lem:endfix}, the fixed points of $\gamma$ are the endpoints of $\tc$.
\end{proof}

This lemma guarantees that given any $\gamma\in \Gamma^*$, there is $\tv\in \tS$ such that $\phi^{|\gamma|} \tv = \gamma_* \tv$.
We will apply this in the situation when $\tS \cap \gamma_* \phi^{-t} \tB\neq\emptyset$; in
\S\ref{sec:depths} we explore the relationship between $t$ and $|\gamma|$. The result here, together with the control on the period established there, can be thought of as a type of closing lemma.

For the moment we establish a relationship between $\Gamma_\rho(t,\alpha)$ and $\Gamma^*_\theta$
that will be important in \S\ref{sec:asymp-gam}, when we combine the lower and upper bounds into a single asymptotic estimate. 

\begin{lemma}\label{lem:c}
For every $0<\rho<\theta$, there exists $t_0 >0$ such that for all $t \geq t_0$  and $\alpha \in (0,\frac 32 \epsilon]$,
we have $\Gamma_\rho(t,\alpha) \subset \Gamma_\theta^*$.
\end{lemma}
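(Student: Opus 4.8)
The plan is to unwind the definitions so that membership in $\Gamma_\rho(t,\alpha)$ forces the fixed points $\xi_\gamma^\pm$ (equivalently, the endpoints of an axis of $\gamma$) to lie deep inside the slightly larger cones $\Pa_\theta$ and $\Fu_\theta$, and then to show that the actions of $\gamma$ and $\gamma^{-1}$ near their attracting fixed points cannot push a point out of these cones once $t$ is large. Concretely, suppose $\gamma\in\Gamma_\rho(t,\alpha)$, so there is $\tv\in \tS_\rho \cap \phi^{-t}\gamma_* \tB_\rho^\alpha$. Then $\tv^-,\tv^+\in \Pa_\rho,\Fu_\rho$ (from $\tv\in\tS_\rho$), and $\phi^t\tv \in \gamma_*\tB_\rho^\alpha$, so $(\phi^t\tv)^- = \gamma(\phi^t\tv)^-\in \gamma\Pa_\rho$ and $(\phi^t\tv)^+ \in \gamma\Fu_\rho$; but $(\phi^t\tv)^\pm = \tv^\pm$, so in particular $\tv^- \in \gamma\Pa_\rho$, i.e. $\gamma^{-1}\tv^- \in \Pa_\rho$, and $\tv^+ \in \gamma\Fu_\rho$. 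Iterating the flow (or rather using that $\phi^{nt}\tv$ still satisfies the same kind of relation with $\gamma^n$), one sees $\gamma^{-n}\tv^-\in \Pa_\rho$ and $\gamma^n\tv^+\in\Fu_\rho$ for all $n\geq 1$; since $\Pa_\rho,\Fu_\rho$ are compact, the accumulation points $\xi_\gamma^-$ and $\xi_\gamma^+$ of these sequences lie in $\Pa_\rho$ and $\Fu_\rho$ respectively.

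With $\xi_\gamma^\pm$ pinned inside $\Pa_\rho \subset \Pa_\theta$ (and likewise for $\Fu$), I would then argue that for $t$ large, $\gamma$ contracts $\Fu_\theta$ into itself. The point is a north–south dynamics statement for the action of $\Gamma$ on $\ideal$: since $\gamma$ has $\xi_\gamma^+$ as attracting and $\xi_\gamma^-$ as repelling fixed point (Remark \ref{rem:minmal}), for any neighborhood $U$ of $\xi_\gamma^+$ and any compact set $K\subset \ideal\setminus\{\xi_\gamma^-\}$ we have $\gamma^n K \subset U$ for large $n$. Here the subtlety is that I need the "large $n$" to be controlled \emph{uniformly} by $|\gamma|$ being large — i.e. a single power of a long $\gamma$ already contracts — rather than by taking many powers of a fixed $\gamma$. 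This is where the relation between $t$, $|\gamma|$, and the displacement estimates comes in: membership in $\Gamma_\rho(t,\alpha)$ together with Lemma \ref{lem:inj0} forces $|\gamma|$ to be comparable to $t$ (large), and the contraction rate of $\gamma$ on its attracting side is governed by $|\gamma|$. Quantitatively, I expect to use the visibility/divergence property to convert "$\gamma$ moves a point at distance $\approx |\gamma|$ along its axis" into an angular contraction estimate: a geodesic ray from $p$ towards a point of $\Fu_\theta$ must fellow-travel the axis of $\gamma$ for a long time before $\gamma$ acts, hence its image lands in the (slightly smaller) cone $\Fu_\rho \subset \Fu_\theta$, as long as $\rho<\theta$ gives us the angular room. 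The strict inequality $\rho<\theta$ is exactly what provides this room.

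I would then combine these to verify both defining conditions of $\Gamma^*_\theta$ in \eqref{eqn:G*}: $\gamma\Fu_\theta\subset\Fu_\theta$ and $\gamma^{-1}\Pa_\theta\subset\Pa_\theta$. For the first, take any $\eta\in\Fu_\theta$; since $\xi_\gamma^-\in\Pa_\rho$ and $\Pa_\rho\cap\Fu_\theta$ can be assumed disjoint (shrinking $\theta_0$ if necessary, or noting $\Pa,\Fu$ are defined by the same angular condition around $\tv_0$ and $-\tv_0$ so are disjoint for $\theta$ small), $\eta$ is at definite angular distance from the repelling fixed point, so $\gamma\eta$ is pushed towards $\xi_\gamma^+\in\Fu_\rho$, landing inside $\Fu_\theta$ once $|\gamma|\geq$ some threshold depending only on $\rho,\theta$ (via the uniform visibility constant $L$ from Definition \ref{def:vis}). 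The $\gamma^{-1},\Pa_\theta$ statement is symmetric, with the roles of attracting/repelling reversed. Choosing $t_0$ so that every $\gamma\in\Gamma_\rho(t,\alpha)$ with $t\geq t_0$ has $|\gamma|$ above this threshold — which is possible since $\Gamma_\rho(t,\alpha)$ being nonempty forces $|\gamma|\geq t - O(\epsilon)$, again by Lemma \ref{lem:inj0} controlling how far $\phi^t$ can move things relative to the fixed geometry of $\tS,\tB$ — completes the proof. The main obstacle I anticipate is making the north–south contraction estimate \emph{uniform and quantitative in $|\gamma|$}; the clean way to do this is to phase it entirely through the uniform visibility axiom applied to the axis of $\gamma$, so that the threshold on $|\gamma|$ depends only on the angular gap determined by $\rho$ and $\theta$ and on the fixed reference data $\tv_0,p,\epsilon$, and not on $\gamma$ itself.
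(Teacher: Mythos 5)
Your overall strategy --- reduce the lemma to a quantitative, uniform-in-$\gamma$ ``north--south'' contraction statement on $\ideal$ derived from the uniform visibility axiom --- is the same one the paper follows. But there are two genuine gaps. First, the iteration step is unjustified: from $\tS_\rho\cap\phi^{-t}\gamma_*\tB_\rho^\alpha\neq\emptyset$ you obtain a single vector $\tv$ with $\tv^-\in\Pa_\rho\cap\gamma\Pa_\rho$ and $\tv^+\in\Fu_\rho\cap\gamma\Fu_\rho$, but nothing forces $\phi^{nt}\tv$ to lie in $\gamma_*^n\tB_\rho^\alpha$: the vector $\tv$ need not be periodic, and $\gamma\Pa_\rho\subset\Pa_\rho$ is essentially what you are trying to prove. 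So you cannot conclude $\gamma^{-n}\tv^-\in\Pa_\rho$ for all $n$, nor that $\xi_\gamma^\pm$ lie in $\Pa_\rho\times\Fu_\rho$. (The paper's proof never needs to locate the fixed points of $\gamma$ at all.)

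Second, and more seriously, the statement you defer as ``the main obstacle I anticipate'' is the entire mathematical content of the lemma, and it is left unproved. The paper isolates it as Lemma \ref{lem:vis}: for disjoint compact $\mathbf{Q},\mathbf{R}\subset\ideal$, a compact $A\subset X$, and open $\mathbf{U}\supset\mathbf{Q}$, $\mathbf{V}\supset\mathbf{R}$, there is $T$ such that any $\gamma$ carrying some point of $C_\mathbf{Q}^T$ into $A$ and some point of $A$ into $C_\mathbf{R}^T$ satisfies $\gamma(\ideal\setminus\mathbf{U})\subset\mathbf{V}$ and $\gamma^{-1}(\ideal\setminus\mathbf{V})\subset\mathbf{U}$. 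Its hypotheses are verified for $\gamma\in\Gamma_\rho(t,\alpha)$ with $t\geq T$ directly from the orbit points $x=\pi\tv$ and $y=\pi(\gamma_*^{-1}\phi^t\tv)$, with no reference to an axis; and its proof consists of two applications of uniform visibility (angle estimates at $p$ and at $\gamma^{-1}p$ using the constant $L(\epsilon)$), not of a fellow-traveling argument --- indeed a geodesic from $p$ toward an arbitrary $\eta\in\Fu_\theta$ has no reason to fellow-travel the axis of $\gamma$, so the mechanism you sketch would not go through as stated. A further small issue: the bound $|\gamma|\geq t-O(\epsilon)$ you invoke is not immediate from Lemma \ref{lem:inj0}; what follows directly is $d(p,\gamma p)\approx t$, whereas $|\gamma|=\inf_q d(q,\gamma q)$ requires the additional input of Lemmas \ref{lem:b-close} and \ref{lem:b-period}. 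The paper's formulation of Lemma \ref{lem:vis} sidesteps $|\gamma|$ entirely by working with displacements of points of $A$.
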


To prove Lemma \ref{lem:c} we need the following consequence of the uniform visibility axiom. 

\begin{lemma}\label{lem:vis}
Let $\mathbf{Q},\mathbf{R}\subset \ideal$ be disjoint compact sets.
Fix a compact subset $A\subset X$ and consider for each $T>0$ the set
\[
C_\mathbf{Q}^T := \{ \tc_{p,\xi}(t) : p\in A, \xi\in \mathbf{Q}, t\geq T\},
\]
where $c_{p,\xi}$ is as in Lemma \ref{lem:p-to-xi}, and define $C_\mathbf{R}^T$ similarly.

Then given any open sets $\mathbf{U},\mathbf{V}\subset \ideal$ such that $\mathbf{U}\supset \mathbf{Q}$ and $\mathbf{V}\supset \mathbf{R}$, there exists $T>0$ such that if $\gamma\in \Gamma$ satisfies $\gamma(C_\mathbf{Q}^T) \cap A\neq\emptyset$ and $\gamma(A) \cap C_\mathbf{R}^T \neq\emptyset$, then $\gamma(\ideal\setminus \mathbf{U}) \subset \mathbf{V}$ and $\gamma^{-1}(\ideal\setminus \mathbf{V}) \subset \mathbf{U}$.
\end{lemma}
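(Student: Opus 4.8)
The plan is to prove only the inclusion $\gamma(\ideal\setminus\mathbf{U})\subset\mathbf{V}$, since this is \emph{equivalent} to $\gamma^{-1}(\ideal\setminus\mathbf{V})\subset\mathbf{U}$: the element $\gamma$ acts as a homeomorphism of $\ideal$ (Remark~\ref{rem:minmal}), so $\gamma(\ideal\setminus\mathbf{U})\subset\mathbf{V}$ says exactly $\ideal\setminus\mathbf{V}\subset\gamma(\mathbf{U})$, which is the same as $\gamma^{-1}(\ideal\setminus\mathbf{V})\subset\mathbf{U}$. Throughout, fix $o\in A$ (the claim is vacuous if $A=\emptyset$), set $R:=\diam A$, and work with the angle $\measuredangle_o$, which extends to a continuous function on $(\bar X\setminus\{o\})^2$ by the definition of the cone topology and the continuity in Lemma~\ref{lem:p-to-xi} (on $X\setminus\{o\}$ it is the ordinary angle at $o$, and $\measuredangle_o(\cdot,\xi)$ for $\xi\in\ideal$ uses the direction $\dot{\tc}_{o,\xi}(0)$); write $\measuredangle_o(\zeta,\mathbf{S}):=\inf_{\sigma\in\mathbf{S}}\measuredangle_o(\zeta,\sigma)$. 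Since $\mathbf{Q}\subset\mathbf{U}$ and $\mathbf{R}\subset\mathbf{V}$ are compact subsets of open sets, there are $\delta_{\mathbf{U}},\delta_{\mathbf{V}}>0$ such that $\measuredangle_o(\zeta,\mathbf{Q})<\delta_{\mathbf{U}}$ implies $\zeta\in\mathbf{U}$ and $\measuredangle_o(\zeta,\mathbf{R})<\delta_{\mathbf{V}}$ implies $\zeta\in\mathbf{V}$.

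First I would isolate two auxiliary facts, both quantified uniformly over the relevant compact data and hence independent of $\gamma$. \textbf{(A)} \emph{Cone estimate}: for every $\delta>0$ there is $T(\delta)$ such that $\measuredangle_o(\tc_{p,\xi}(s),\xi)<\delta$ whenever $p\in A$, $\xi\in\mathbf{Q}$, $s\geq T(\delta)$ (and similarly with $\mathbf{R}$); this is a compactness argument using that $\tc_{p,\xi}(s)\to\xi$ in $\bar X$ as $s\to\infty$, which rests only on the cone topology and Lemma~\ref{lem:p-to-xi}. \textbf{(B)} \emph{Visibility estimates}: the uniform visibility axiom provides a function $L(\cdot)$ such that (i) a geodesic ray $\tc\colon[0,\infty)\to X$ with $\measuredangle_o(\tc(0),\tc(\infty))\geq\beta$ meets $B(o,L(\beta/2))$ — apply visibility to long initial subsegments of $\tc$, whose subtended angle at $o$ approaches $\measuredangle_o(\tc(0),\tc(\infty))$ — and (ii) if $d(x,x')\leq\rho$ while $d(o,x),d(o,x')\geq L(\delta)+\rho$, then $\measuredangle_o(x,x')<\delta$, since the segment $[x,x']$ stays at distance $\geq L(\delta)$ from $o$.

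Next I would unwind the hypotheses. From $\gamma(C_\mathbf{Q}^T)\cap A\neq\emptyset$ pick $p\in A$, $\xi\in\mathbf{Q}$, $t\geq T$ with $\gamma(\tc_{p,\xi}(t))\in A$; writing $y:=\tc_{p,\xi}(t)$ this gives $d(\gamma^{-1}o,y)\leq R$ and $d(o,y)\geq t-R$, hence $d(o,\gamma^{-1}o)\geq t-2R$. Applying (A) and (B)(ii) (with parameters a small fraction of $\delta$), once $t\geq T$ exceeds a threshold depending only on $\delta$ and the fixed data we obtain $\measuredangle_o(\gamma^{-1}o,\mathbf{Q})<\delta$; symmetrically, $\gamma(A)\cap C_\mathbf{R}^T\neq\emptyset$ furnishes some $\eta\in\mathbf{R}$ with $\measuredangle_o(\gamma o,\eta)<\delta$ once $T$ is large. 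Here $\delta$ is a small number to be fixed at the very end in terms of $\delta_{\mathbf{U}},\delta_{\mathbf{V}}$ alone. Now take any $\zeta\in\ideal\setminus\mathbf{U}$. Then $\measuredangle_o(\zeta,\mathbf{Q})\geq\delta_{\mathbf{U}}$, so $\measuredangle_o(\gamma^{-1}o,\zeta)\geq\delta_{\mathbf{U}}-\delta>0$ by the triangle inequality for $\measuredangle_o$, and by (B)(i) the ray $\tc_{\gamma^{-1}o,\zeta}$ meets $B(o,D)$ with $D:=L((\delta_{\mathbf{U}}-\delta)/2)$. Applying the isometry $\gamma$ and using $\gamma\circ\tc_{\gamma^{-1}o,\zeta}=\tc_{o,\gamma\zeta}$ (uniqueness of rays, Lemma~\ref{lem:p-to-xi}), the ray $\tc_{o,\gamma\zeta}$ meets $B(\gamma o,D)$, say at $w$. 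Since $w$ lies on the ray from $o$ to $\gamma\zeta$ we have $\measuredangle_o(w,\gamma\zeta)=0$; since $d(w,\gamma o)\leq D$ while $d(o,\gamma o)\geq t-2R$ is large, (B)(ii) gives $\measuredangle_o(w,\gamma o)<\delta$ once $T$ is large. Hence
\[
\measuredangle_o(\gamma\zeta,\mathbf{R})\le\measuredangle_o(\gamma\zeta,\eta)\le\measuredangle_o(\gamma\zeta,w)+\measuredangle_o(w,\gamma o)+\measuredangle_o(\gamma o,\eta)<0+\delta+\delta=2\delta,
\]
so choosing $\delta<\tfrac12\min(\delta_{\mathbf{U}},\delta_{\mathbf{V}})$ at the outset yields $\gamma\zeta\in\mathbf{V}$. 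Every threshold invoked for ``$T$ large'' depended only on $A,o,\mathbf{Q},\mathbf{R},\mathbf{U},\mathbf{V}$, so one $T$ works for all $\gamma$.

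I expect the main obstacle to be organizational rather than conceptual: one must fix $\delta$ using only $\mathbf{U},\mathbf{V}$, then choose $T$ from $\delta$, $R$ and the visibility function $L$, all before $\gamma$ enters the picture, and repeatedly trade ``two points far from $o$ that are close to each other'' for ``small angle at $o$'' via uniform visibility; the one genuinely technical point is the cone estimate (A). The underlying geometry — that the hypotheses force $\gamma$ to act on $\ideal$ with a ``source'' near $\mathbf{Q}$ and a ``sink'' near $\mathbf{R}$, so that $\ideal$ minus a neighborhood of $\mathbf{Q}$ is carried into a neighborhood of $\mathbf{R}$ (north--south dynamics) — is precisely what the uniform visibility axiom encodes, and once (A) and (B) are in hand the conclusion is the short chain of $\measuredangle_o$-triangle inequalities displayed above.
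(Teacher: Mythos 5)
Your proof is correct, and while it runs on the same engine as the paper's (uniform visibility plus angle bookkeeping, with all thresholds fixed before $\gamma$ appears), the organization is genuinely different in two ways. First, you observe that the two conclusions $\gamma(\ideal\setminus\mathbf{U})\subset\mathbf{V}$ and $\gamma^{-1}(\ideal\setminus\mathbf{V})\subset\mathbf{U}$ are logically equivalent for a bijection of $\ideal$, so only one needs proving; the paper treats them as two parallel cases. Second, you measure all angles at a single fixed basepoint $o\in A$ and extract the north--south picture explicitly ($\gamma^{-1}o$ angularly close to $\mathbf{Q}$, $\gamma o$ angularly close to $\mathbf{R}$), with the key geometric step being your (B)(i): a ray whose endpoints subtend a definite angle at $o$ must pass near $o$, which you then transport by $\gamma$. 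The paper instead bounds $\measuredangle_p(\xi,\gamma\eta)\leq\measuredangle_p(\gamma y,\gamma q)+\measuredangle_{\gamma^{-1}p}(\eta,q)$ with angles measured at varying points of $A$ and its translates, and shows directly that the ray from $q$ to $\eta$ stays far from $\gamma^{-1}p$. Your version buys a cleaner quantifier structure and makes the source--sink dynamics visible; the paper's avoids needing the continuous extension of $\measuredangle_o$ to $\bar X\setminus\{o\}$ and the cone estimate (A). On that last point, one refinement: (A) should not be justified by bare compactness (joint continuity of $(p,\xi,s)\mapsto\tc_{p,\xi}(s)$ up to $s=\infty$ is exactly what is at issue); the clean route is to note that the tail $\tc_{p,\xi}|_{[s,\infty)}$ stays at distance at least $s-\diam A$ from $o$, so uniform visibility bounds the angle it subtends at $o$ by $\delta$ once $s\geq L(\delta)+\diam A$, and then let the parameter tend to infinity using the pointwise convergence $\tc_{p,\xi}(u)\to\xi$. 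With that adjustment every step is justified and a single $T$ depending only on $A$, $\mathbf{Q}$, $\mathbf{R}$, $\mathbf{U}$, $\mathbf{V}$ works for all $\gamma$, as required.
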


\begin{proof}
We prove the first inclusion; the second is similar. Choose $\epsilon>0$ sufficiently small that for every $p\in A$, $\xi\in \mathbf{R}$, and $\eta\in \ideal$ with $\measuredangle_p(\xi,\eta) < 2\epsilon$, we have $\eta\in \mathbf{V}$, and similarly with $\mathbf{R},\mathbf{V}$ replaced by $\mathbf{Q},\mathbf{U}$. (Existence of such an $\epsilon$ for each individual $p$ is immediate from the definition of the topology on $\ideal$; the fact that $\epsilon$ can be chosen independently of $p$ uses compactness of $A$ and continuity of the map $p\mapsto \dot{\tc}_{p,\xi}(0)$ from Lemma \ref{lem:p-to-xi}.) Let $L=L(\epsilon)$ be given by the uniform visibility property (see Definition \ref{def:vis}),
and let $T = 2L + \diam A$.

Fix $\gamma$ satisfying the hypothesis of the lemma, and let $x,y\in A$ be such that $\gamma y\in C_\mathbf{R}^T$ and $\gamma^{-1} x \in C_\mathbf{Q}^T$. By definition of $C_\mathbf{R}^T$, there is $p\in A$ such that $\xi := \tc_{p,\gamma y}(\infty) \in \mathbf{R}$ and $d(p,\gamma y) \geq T$. Similarly, there is $q\in A$ such that $\tc_{q,\gamma^{-1} x}(\infty) \in \mathbf{Q}$ and $d(q,\gamma^{-1} x) \geq T$.

\begin{figure}[htbp]
\begin{tikzpicture}[scale=3]
\fill[red] (170:1) node[above left]{$\mathbf{U}$}
arc (170:210:1) -- (210:1.04) arc (210:170:1.04) -- cycle;
\fill[blue] (180:1.04) node[below left]{$\mathbf{Q}$}
arc (180:200:1.04) -- (200:1.08) arc (200:180:1.08) -- cycle;
\fill[red] (10:1) arc (10:-30:1) -- (-30:1.04) 
node[right]{$\mathbf{V}$} arc (-30:10:1.04) -- cycle;
\fill[blue] (0:1.04) node[below right]{$\mathbf{R}$}
arc (0:-20:1.04) -- (-20:1.08) arc (-20:0:1.08) -- cycle;
\draw (45:.2) to[bend left=10] (135:.2) to[bend left=10] (225:.2) to[bend left=10] (315:.2) to[bend left=10] cycle;
\draw[dashed,->] (-90:.5) node[below]{$A$} -- (-80:.15);
\draw (200:1) -- (200:.85) arc (200:180:.85) -- (180:1);
\draw[dashed,->] (-150:1.3) node[below]{$C_\mathbf{Q}^T$} -- (200:.95);
\draw (-20:1) -- (-20:.85) node[below left]{$C_\mathbf{R}^T$} arc (-20:0:.85) -- (0:1);
\draw (0,0) circle (1);
\coordinate (q) at (-120:.11);
\fill (q) node[below]{$q$} circle(.5pt);
\coordinate (g-x) at (-165:.9);
\fill (g-x) node[below right]{$\gamma^{-1} x$} circle(.5pt);
\draw (q) to[bend right=2] (g-x);
\coordinate (p) at (40:.1);
\fill (p) node[above]{$p$} circle(.5pt);
\coordinate (xi) at (-5:1);
\fill (xi) node[below left]{$\xi$} circle(.5pt);
\draw (p) to[bend right=5] coordinate[pos=0.87] (gy) (xi);
\fill (gy) node[below left]{$\gamma y$} circle(.5pt);
\fill (0:.1) node[below right]{$x$} circle(.5pt);
\fill (-50:.13) node[below right]{$y$} circle(.5pt);
\coordinate (gq) at (3:.85);
\fill (gq) circle(.5pt);
\draw (gy) to[bend left=40] (gq) node[above left]{$\gamma q$};
\draw (p) to[bend right=4] (gq);
\coordinate (ge) at (8:1);
\fill (ge) node[above right]{$\gamma\eta$} circle(.5pt);
\draw (p) to[bend right=3] (ge);
\coordinate (e) at (140:1);
\fill (e) node[above left]{$\eta$} circle(.5pt);
\draw (e) to[bend left=15] coordinate[pos=.4](z) (q);
\fill (z) node[above right]{$z$} circle(.5pt);
\coordinate (g-p) at (-170:.82);
\fill (g-p) circle(.5pt);
\draw[dashed,->,shorten >=2pt] (175:.82) node[above]{$\gamma^{-1}p$} -- (g-p);
\draw (g-p) to[bend right=70] (e);
\draw (g-p) to[bend left=10] (q);
\draw (g-x) to[bend right=40] coordinate[pos=.6](z') (z);
\fill (z') node[right]{$z'$} circle(.5pt);
\end{tikzpicture}
\caption{Proving Lemma \ref{lem:vis}}
\label{fig:vis}
\end{figure}

Given an arbitrary $\eta\in \ideal \setminus \mathbf{U}$, we will show that $\measuredangle_p(\xi,\gamma\eta) < 2\epsilon$. Observe that
\begin{equation}\label{eqn:angle-bd}
\measuredangle_p(\xi,\gamma\eta)
\leq \measuredangle_p(\xi,\gamma q) + \measuredangle_p(\gamma \eta,\gamma q) 
= \measuredangle_p(\gamma y, \gamma q) + \measuredangle_{\gamma^{-1} p}(\eta, q).
\end{equation}
We bound $\measuredangle_p(\gamma y, \gamma q)$ by observing that $d(\gamma y, \gamma q) = d(y,q) \leq \diam A$, while $d(p,\gamma y) \geq T$. Thus for every point $y'$ on the geodesic segment connecting $\gamma y$ and $\gamma q$, we have $d(p,y') \geq T - \diam A \geq L$, and the uniform visibility property implies that
\begin{equation}\label{eqn:angle-bd-1}
\measuredangle_p(\gamma y, \gamma q) < \epsilon.
\end{equation}
To bound $\measuredangle_{\gamma^{-1} p}(\eta, q)$, first note by our choice of $\epsilon$ and $q$, for every $z$ on the geodesic ray from $q$ to $\eta$
we have  $\measuredangle_q(\gamma^{-1} x, z) = \measuredangle_q(\gamma^{-1} x, \eta) \geq 2\epsilon$.  By the uniform visibility property, for each such $z$ there is $z'$ on the geodesic segment from $z$ to $\gamma^{-1} x$ such that $d(q,z') \leq L$. Note also that
\[
d(z,\gamma^{-1} x) \geq d(z',\gamma^{-1} x) \geq d(q,\gamma^{-1} x) - d(q,z')
\geq T-L,
\]
and thus
\[
d(z,\gamma^{-1} p) \geq d(z,\gamma^{-1} x) - d(\gamma^{-1} x, \gamma^{-1} p)
\geq T - L - \diam A \geq L
\]
since $\gamma^{-1}$ acts isometrically. Applying the uniform visibility property once more gives $\measuredangle_{\gamma^{-1} p}(\eta,q) < \epsilon$. Together with \eqref{eqn:angle-bd} and \eqref{eqn:angle-bd-1} this gives $\measuredangle_p(\xi,\gamma\eta) < 2\epsilon$, and thus $\gamma\eta \in \mathbf{V}$, which proves the lemma.
\end{proof}

\begin{proof}[Proof of Lemma \ref{lem:c}]
It suffices to consider the case $\alpha = \frac 32\epsilon$.
We apply Lemma \ref{lem:vis} by putting $\mathbf{Q}=\Pa_\rho$, $\mathbf{R}=\Fu_\rho$, $A=\pi \tB_\rho^\alpha$, and letting $\mathbf{U},\mathbf{V}$ be the interiors of $\Pa_\theta,\Fu_\theta$, respectively. Let $T$ be given by that lemma; then for every $t\geq T$ and $\gamma\in \Gamma_\rho(t,\alpha)$, there exists $v\in \tS_\rho \subset \tB_\rho^\alpha$ such that $w := \gamma_*^{-1} \phi^{t} v \in \tB_\rho^\alpha$.
Putting $x = \pi v \in A$ we have
\[
\gamma^{-1} x = \pi \gamma_*^{-1} v = \pi \phi^{-t} w;
\]
since $\pi w \in A$, $w^- \in \Pa_\rho = \mathbf{Q}$, and $t\geq T$, this implies that $\gamma^{-1} x \in C_\mathbf{Q}^T$. Similarly, putting $y = \pi w \in A$ gives
\[
\gamma y = \pi \gamma_* w = \pi \phi^t v;
\]
since $\pi v \in A$, $v^+ \in \Fu_\rho = \mathbf{R}$, and $t\geq T$, this gives $\gamma y \in C_\mathbf{R}^T$, and Lemma \ref{lem:vis} yields the desired result.
\end{proof}

\subsection{Depths of intersections}\label{sec:depths}

Recall that we fix $\tv_0\in \mathcal{E}$ and $p = \pi\tv_0$.
Consider the set
\begin{equation}\label{eqn:G*t}
\Gamma_\theta^*(t,\alpha) := \Gamma^* \cap \Gamma_\theta(t,\alpha)
= \{\gamma\in \Gamma^* : \tS_\theta \cap \gamma_* \phi^{-t} \tB_\theta^\alpha \neq\emptyset \}.
\end{equation}
We need to understand how $|\gamma|$ and $t$ are related when $\gamma\in \Gamma^*(t,\alpha)$.

\begin{definition}\label{def:bxg}
Given $\xi\in \ideal$ and $\gamma\in\Gamma$, let $b_\xi^\gamma := b_\xi(\gamma p, p)$. 
\end{definition}

\begin{lemma}\label{lem:b-close}
For all $\xi,\eta\in \Pa$ and every $\gamma \in \Gamma_\theta(t,\alpha)$ with $t>0$, we have $|b_\xi^\gamma - b_\eta^\gamma| < \epsilon^2$.
\end{lemma}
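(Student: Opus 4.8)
The plan is to derive the estimate directly from Lemma \ref{lem:eps'}, applied with $q = \gamma p$. Recall that $\theta \leq \theta_0 \leq \theta_2$, so Lemma \ref{lem:eps'} is available: it says that for any point $q$ lying within $2\epsilon$ of $\pi H^{-1}(\Pa_\theta\times\Fu_\theta\times[0,\infty))$ we have $|b_\xi(q,p) - b_\eta(q,p)| < \epsilon^2$ for all $\xi,\eta\in\Pa_\theta$, and with $q=\gamma p$ this reads $|b_\xi^\gamma - b_\eta^\gamma| < \epsilon^2$, which is exactly the assertion. Thus the entire task is to verify that $\gamma p$ lies within $2\epsilon$ of $\pi H^{-1}(\Pa_\theta\times\Fu_\theta\times[0,\infty))$.

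First I would unwind the hypothesis $\gamma\in\Gamma_\theta(t,\alpha)$: by \eqref{eqn:Gt} there is $\tv\in\tS_\theta$ with $\phi^t\tv\in\gamma_*\tB_\theta^\alpha$, equivalently $\gamma_*^{-1}\phi^t\tv\in\tB_\theta^\alpha$, and projecting to $X$ gives $\gamma^{-1}\pi\phi^t\tv\in\pi\tB_\theta^\alpha$. Separately, $\tv_0\in\tB_\theta^\alpha$: indeed $\tv_0^-\in\Pa$ and $\tv_0^+\in\Fu$ because $\measuredangle_p(\tv_0,\tv_0)=0$, while $s(\tv_0)=b_{\tv_0^-}(p,p)=0\in[0,\alpha]$; hence $p=\pi\tv_0\in\pi\tB_\theta^\alpha$. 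Since $\diam\pi\tB_\theta^\alpha<2\epsilon$ by Lemma \ref{lem:inj} (using $\alpha\leq\frac32\epsilon$ and $\theta\leq\theta_1$), we get $d(\gamma^{-1}\pi\phi^t\tv,p)<2\epsilon$, hence $d(\pi\phi^t\tv,\gamma p)<2\epsilon$ because $\gamma$ acts isometrically.

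It then remains to observe that $\pi\phi^t\tv$ already lies in $\pi H^{-1}(\Pa\times\Fu\times[0,\infty))$: one has $(\phi^t\tv)^\pm=\tv^\pm$, which lie in $\Pa$ and $\Fu$ respectively since $\tv\in\tS_\theta$, and $s(\phi^t\tv)=s(\tv)+t\geq t>0$ by \eqref{eqn:+t} together with $s(\tv)\geq0$. Therefore $\gamma p$ lies within $2\epsilon$ of the point $\pi\phi^t\tv$ of $\pi H^{-1}(\Pa_\theta\times\Fu_\theta\times[0,\infty))$, and Lemma \ref{lem:eps'} concludes the proof.

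I do not expect a real obstacle here; this is a short bookkeeping argument whose only purpose is to locate $\gamma p$ so that Lemma \ref{lem:eps'} can be invoked. The one point that needs a little care is checking $p\in\pi\tB_\theta^\alpha$, so that the diameter bound of Lemma \ref{lem:inj} controls $d(\gamma p,\pi\phi^t\tv)$; this is why I would route the argument through $\pi\phi^t\tv$, whose endpoints are automatically in $\Pa\times\Fu$, rather than trying to move $\tv$ directly by $\gamma$, which would need the stronger hypothesis $\gamma\in\Gamma^*$ not assumed in this lemma.
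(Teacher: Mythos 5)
Your proof is correct and is essentially the paper's own argument: both locate $\gamma p$ within $2\epsilon$ of $\pi H^{-1}(\Pa\times\Fu\times[0,\infty))$ by comparing $\gamma^{-1}\pi\phi^t\tv\in\pi\tB_\theta^\alpha$ with $p\in\pi\tB_\theta^\alpha$ via the diameter bound of Lemma \ref{lem:inj}, and then invoke Lemma \ref{lem:eps'} at $q=\gamma p$. Your explicit check that $p=\pi\tv_0\in\pi\tB_\theta^\alpha$ is a small point the paper leaves implicit, but the route is the same.
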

\begin{proof}
Given any such $\gamma$, there exists $v\in \tS \cap \gamma_* \phi^{-t} \tB^\alpha$,
so $\phi^t v \in \gamma_* \tB^\alpha$. Thus there is $q\in \pi B^\alpha$ such that $\gamma q = \pi \phi^t v \in \pi H^{-1}(\Pa_\theta\times\Fu_\theta\times [0,\infty))$. Since $p \in \pi B^\alpha$, we have $d(\gamma p, \gamma q) = d(p,q) < 2\epsilon$ by Lemma \ref{lem:inj}, so $\gamma p$ satisfies the condition of Lemma \ref{lem:eps'}, and we have $|b_\xi(\gamma p, p) - b_\eta(\gamma p, p)| < \epsilon^2$ for all $\xi,\eta \in \Pa_\theta$.
\end{proof}

\begin{lemma}\label{lem:b-period}
If $\tc$ is an axis of $\gamma$ and $\xi=\tc(-\infty)$, then $b_\xi^\gamma = |\gamma|$.
\end{lemma}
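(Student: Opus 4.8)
The plan is to read the identity off from the flow–equivariance of the Hopf coordinate together with the cocycle property of the Busemann function, with one equivariance observation needed to pass from a point on the axis to the reference point $p$ appearing in Definition \ref{def:bxg}. First I would set $\tv=\dot{\tc}(0)$, so that $\tv^-=\tc(-\infty)=\xi$ and $\pi\tv=\tc(0)$; by Definition \ref{def:hopf} this gives $s(\dot{\tc}(t))=b_\xi(\tc(t),p)$ for every $t$, since every $\dot{\tc}(t)$ is tangent to the same geodesic $\tc$ and hence still has backward endpoint $\xi$. Combining this with \eqref{eqn:+t} yields
\[
b_\xi(\tc(t),p)=b_\xi(\tc(0),p)+t \qquad\text{for all }t\in\R,
\]
i.e.\ the Busemann function based at $\xi$ grows at unit rate as one moves along $\tc$ away from $\xi$.

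Next I would use that $\tc$ is an axis of $\gamma$: by \eqref{eqn:axis} there is $\ell>0$ with $\tc(t+\ell)=\gamma\tc(t)$, and Lemma \ref{lem:2-axes} forces $\ell=|\gamma|$, so $\gamma\tc(0)=\tc(|\gamma|)$; moreover Lemma \ref{lem:endfix} says $\xi=\tc(-\infty)=\xi_\gamma^-$ is a fixed point of $\gamma$. The first of these, fed into the displayed identity, gives $b_\xi(\gamma\tc(0),p)=b_\xi(\tc(0),p)+|\gamma|$.

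Finally I would expand $b_\xi^\gamma=b_\xi(\gamma p,p)$ using the cocycle property \eqref{eqn:pqr} through the two intermediate points $\gamma\tc(0)$ and $\tc(0)$:
\[
b_\xi(\gamma p,p)=b_\xi(\gamma p,\gamma\tc(0))+b_\xi(\gamma\tc(0),\tc(0))+b_\xi(\tc(0),p).
\]
Since $\gamma$ is an isometry fixing $\xi$, we have $b_\xi(\gamma a,\gamma b)=b_\xi(a,b)$, so the first term equals $b_\xi(p,\tc(0))=-b_\xi(\tc(0),p)$ by \eqref{eqn:pq}; the middle term equals $b_\xi(\tc(|\gamma|),\tc(0))=|\gamma|$ by the displayed identity together with \eqref{eqn:pqr} (equivalently: travelling from $\tc(|\gamma|)$ toward $\xi$ one meets the horosphere through $\tc(0)$ after arclength exactly $|\gamma|$). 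The first and third terms cancel, leaving $b_\xi^\gamma=|\gamma|$.

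I do not expect a genuine obstacle; the computation is short. The only point that must be stated carefully is the equivariance identity $b_\xi(\gamma a,\gamma b)=b_\xi(a,b)$ for $\gamma$ fixing $\xi$ — immediate from the definition of the Busemann function and the fact that $\gamma$ acts by isometries commuting with the geodesic flow — since this is exactly what transports the computation from the axis $\tc$ (where everything is transparent) to the fixed reference point $p$.
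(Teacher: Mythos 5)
Your proof is correct and uses essentially the same ingredients as the paper's: the cocycle property \eqref{eqn:pqr}, the $\Gamma$-equivariance of Busemann functions together with $\gamma\xi=\xi$, and the fact that $b_\xi(\cdot,p)$ grows at unit rate along the axis so that $b_\xi(\gamma q,q)=|\gamma|$ for $q$ on $\tc$. The only cosmetic difference is that you derive the unit-rate growth from \eqref{eqn:+t} and split through two intermediate points, whereas the paper picks $q$ on the axis and computes the difference $|\gamma|-b_\xi^\gamma$ directly; both are the same argument.
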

\begin{proof}
Let $q$ be any point on $\tc$; then $|\gamma| = d(\gamma q,q) = b_\xi(\gamma q,q)$. Thus \eqref{eqn:pqr} gives
\begin{align*}
|\gamma| - b_\xi^\gamma
&= b_\xi(\gamma q,q) - b_\xi(\gamma p,p) \\
&= (b_\xi(\gamma q, \gamma p) + b_\xi(\gamma p,q))
- (b_\xi(\gamma p, q) + b_\xi(q,p)) \\
&= b_\xi(\gamma q,\gamma p) - b_{\gamma\xi}(\gamma q,\gamma p),
\end{align*}
where the last step uses the fact that $\gamma$ acts isometrically so Busemann functions are unchanged when $\gamma$ is applied to all three arguments. Finally, $\gamma\xi=\xi$, so the lemma is proved.
\end{proof}

\begin{lemma}\label{lem:s-interval}
Given any $\gamma\in\Gamma^*$ and any $t\in\R$, we have\footnote{The asymmetry in this lemma -- the fact that $\tw^-$ appears and $\tw^+$ does not -- is due to the fact that we define $s(\tw)$ using $\tw^-$; in the original proof by Margulis, one must similarly choose whether to construct the front of the flow box as a union of stable leaves or unstable leaves. Of course either choice leads to a similar argument.}
\[
\tS \cap \phi^{-t} \gamma_* \tB^\alpha
= \{ \tw\in E^{-1}(\Pa \times \gamma \Fu) :
s(\tw) \in [0,\epsilon^2] \cap (b_{\tw^-}^\gamma - t + [0,\alpha])\}.
\]
\end{lemma}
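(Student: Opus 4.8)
The plan is to unwind the definitions of $\tS$, $\tB^\alpha$, and the Hopf map $H$, and to track what the condition $\tw \in \tS \cap \phi^{-t}\gamma_*\tB^\alpha$ says in terms of the three coordinates $(\tw^-,\tw^+,s(\tw))$. First I would observe that, by \eqref{eqn:S}, membership $\tw\in\tS$ is equivalent to $(\tw^-,\tw^+)\in\Pa\times\Fu$ together with $s(\tw)\in[0,\epsilon^2]$. Next I would analyze $\tw\in\phi^{-t}\gamma_*\tB^\alpha$, i.e.\ $\phi^t\tw\in\gamma_*\tB^\alpha$. Writing $\tu=\gamma_*^{-1}\phi^t\tw$, so that $\tu\in\tB^\alpha$, I use the $\Gamma$-equivariance of endpoints ($\gamma_*\tu$ has endpoints $\gamma\tu^{\pm}$) and the flow-invariance of endpoints to get $\tw^-=\gamma\tu^-$ and $\tw^+=\gamma\tu^+$; thus $\tu\in\tB^\alpha$ forces $\tu^-\in\Pa$, $\tu^+\in\Fu$, hence $\tw^-\in\gamma\Pa$ and $\tw^+\in\gamma\Fu$. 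Combined with the $\tS$-condition $\tw^-\in\Pa$, $\tw^+\in\Fu$, and using that $\gamma\in\Gamma^*$ gives $\gamma^{-1}\Pa\subset\Pa$ and $\gamma\Fu\subset\Fu$, the endpoint constraints collapse exactly to $(\tw^-,\tw^+)\in E^{-1}$-preimage language, i.e.\ $\tw\in E^{-1}(\Pa\times\gamma\Fu)$ (note $\Pa\cap\gamma\Pa=\gamma\Pa$ since $\gamma\Pa\subset\Pa$; one should double-check the exact form here, but it reduces to $\tw^-\in\Pa$ and $\tw^+\in\gamma\Fu$).

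The heart of the argument is the depth (flow-coordinate) condition. I would compute $s(\phi^t\tw)$ in two ways. On one hand, by the shift property \eqref{eqn:+t}, $s(\phi^t\tw)=s(\tw)+t$. On the other hand, $\phi^t\tw=\gamma_*\tu$ with $\tu\in\tB^\alpha$, so $s(\tu)\in[0,\alpha]$, and I need the relation between $s(\gamma_*\tu)$ and $s(\tu)$. By Definition \ref{def:hopf}, $s(\gamma_*\tu)=b_{(\gamma_*\tu)^-}(\pi\gamma_*\tu,p)=b_{\gamma\tu^-}(\gamma\pi\tu,p)$. Using that $\gamma$ acts isometrically on Busemann functions ($b_{\gamma\xi}(\gamma x,\gamma y)=b_\xi(x,y)$) together with the cocycle property \eqref{eqn:pqr}, I split $b_{\gamma\tu^-}(\gamma\pi\tu,p)=b_{\gamma\tu^-}(\gamma\pi\tu,\gamma p)+b_{\gamma\tu^-}(\gamma p,p)=b_{\tu^-}(\pi\tu,p)+b_{\gamma\tu^-}^\gamma=s(\tu)+b_{\tw^-}^\gamma$, where in the last step I use $\gamma\tu^-=\tw^-$ and Definition \ref{def:bxg}. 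Hence $s(\phi^t\tw)=s(\tu)+b_{\tw^-}^\gamma$. Equating the two expressions gives $s(\tw)+t=s(\tu)+b_{\tw^-}^\gamma$, i.e.\ $s(\tu)=s(\tw)+t-b_{\tw^-}^\gamma$, and the constraint $s(\tu)\in[0,\alpha]$ becomes $s(\tw)\in b_{\tw^-}^\gamma-t+[0,\alpha]$. Intersecting with the $\tS$-constraint $s(\tw)\in[0,\epsilon^2]$ yields exactly the stated description.

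For the reverse inclusion I would simply read the same chain of equivalences backwards: given $\tw\in E^{-1}(\Pa\times\gamma\Fu)$ with $s(\tw)$ in the indicated intersection, set $\tu=\gamma_*^{-1}\phi^t\tw$; the endpoint computations show $\tu^-\in\Pa$, $\tu^+\in\Fu$, and the Busemann computation shows $s(\tu)=s(\tw)+t-b_{\tw^-}^\gamma\in[0,\alpha]$, so $\tu\in\tB^\alpha$ and $\tw\in\phi^{-t}\gamma_*\tB^\alpha$; the $\tS$-membership is immediate from $\tw^-\in\Pa$, $\tw^+\in\Fu$ (the latter because $\gamma\Fu\subset\Fu$) and $s(\tw)\in[0,\epsilon^2]$. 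The main obstacle — really the only subtle point — is bookkeeping the equivariance of the Hopf coordinate: getting the sign and the base-point shift right in the identity $s(\gamma_*\tu)=s(\tu)+b_{\tw^-}^\gamma$, which rests on combining the isometry-invariance of Busemann functions with the cocycle relation \eqref{eqn:pqr} and the normalization $b_\xi(p,p)=0$. Everything else is a direct translation through the definitions of $\tS$, $\tB^\alpha$, $H$, and $\Gamma^*$.
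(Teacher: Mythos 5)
Your proposal is correct and follows essentially the same route as the paper: reduce to the endpoint condition $E(\tw)\in\Pa\times\gamma\Fu$ using both halves of the definition of $\Gamma^*$, then translate the depth condition via the cocycle property \eqref{eqn:pqr}, the $\Gamma$-equivariance of Busemann functions, and the shift rule \eqref{eqn:+t}. One bookkeeping slip in your parenthetical: $\gamma\in\Gamma^*$ gives $\gamma^{-1}\Pa\subset\Pa$, i.e.\ $\Pa\subset\gamma\Pa$, so $\Pa\cap\gamma\Pa=\Pa$ (not $\gamma\Pa$) --- which is consistent with the final form $\tw^-\in\Pa$ that you correctly state.
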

\begin{proof}
To prove that $\tS\cap \phi^{-1}\gamma_*\tB^\alpha\subset E^{-1}(\Pa\times\gamma\Fu)$, we observe that if $E(\tw) \notin \Pa\times \gamma\Fu$, then either $\tw^-\notin \Pa$, so $\tw\notin \tS$, or $\tw^+\notin \gamma\Fu$, so 
$\tw\notin \phi^{-t}\gamma_*\tB^\alpha$.

It remains to show that given $\tw\in E^{-1}(\Pa\times\gamma\Fu)$, we have
\begin{align}
\label{eqn:S-int}
\tw\in \tS \ &\Leftrightarrow\ s(\tw) \in [0,\epsilon^2], \text{ and}\\
\label{eqn:gB-int}
\tw\in \phi^{-t} \gamma_* \tB^\alpha
\ &\Leftrightarrow\ s(\tw) \in b_{\tw^-}^\gamma - t + [0,\alpha].
\end{align}
The first of these is immediate from the definition of $\tS$. For the second, we observe that $s(\tv) = b_{\tv^-}(\pi \tv,p) = b_{\gamma \tv^-}(\gamma\pi \tv,\gamma p)$, and thus
\begin{align*}
\gamma_* \tB^\alpha
&= \{\gamma_* \tv : \tv\in E^{-1}(\Pa\times\Fu)
\text{ and } b_{\tv^-}(\pi \tv,p) \in [0,\alpha] \} \\
&= \{ \tw \in E^{-1}(\gamma\Pa\times\gamma\Fu) : 
b_{\tw^-}(\pi \tw,\gamma p) \in [0,\alpha]\}
\end{align*}
By \eqref{eqn:pqr} and \eqref{eqn:pq}, we have
\[
b_{\tw^-}(\pi \tw, \gamma p)
= b_{\tw^-}(\pi \tw, p) + b_{\tw^-}(p,\gamma p)
= s(\tw) - b_{\tw^-}^\gamma;
\]
moreover, since $s(\phi^t \tw) = s(\tw) + t$ by \eqref{eqn:+t}, we see that $\phi^t \tw \in \gamma_*\tB^\alpha$ if and only if $s(\tw) - b_{\tw^-}^\gamma + t \in [0,\alpha]$, which proves \eqref{eqn:gB-int} and completes the proof of the lemma.
\end{proof}

\begin{lemma}\label{lem:|g|-to-t}
If $\gamma\in \Gamma^*(t,\alpha)$,
then $|\gamma| \in [t-\alpha-\epsilon^2,t+2\epsilon^2]$.
\end{lemma}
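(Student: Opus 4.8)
The plan is to read the two-sided bound off the previous lemmas in this subsection, exploiting the decomposition $\Gamma^*(t,\alpha)=\Gamma^*\cap\Gamma_\theta(t,\alpha)$. Membership in $\Gamma^*$ will be used (via Lemma \ref{lem:closing-0}) to locate the repelling fixed point $\xi_\gamma^-$ inside $\Pa$ and to identify the associated Busemann quantity $b_{\xi_\gamma^-}^\gamma$ with $|\gamma|$ (Lemma \ref{lem:b-period}); membership in $\Gamma_\theta(t,\alpha)$ will be used to pin $b_{\tw^-}^\gamma$ near $t$ for a suitable $\tw$ in the (nonempty) intersection $\tS\cap\phi^{-t}\gamma_*\tB^\alpha$, via the interval description in Lemma \ref{lem:s-interval}. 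Lemma \ref{lem:b-close} then lets us pass between $\tw^-$ and $\xi_\gamma^-$ at a cost of $\epsilon^2$.

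First I would treat the $\Gamma^*$ half. Since $\gamma\in\Gamma^*$, Lemma \ref{lem:closing-0} gives an axis $\tc$ of $\gamma$ with $\tc(-\infty)\in\Pa$; by Lemma \ref{lem:endfix} this endpoint equals $\xi_\gamma^-$, so $\xi_\gamma^-\in\Pa$, and Lemma \ref{lem:b-period} yields $b_{\xi_\gamma^-}^\gamma=|\gamma|$.

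Next I would treat the $\Gamma_\theta(t,\alpha)$ half. Because $\tS\cap\phi^{-t}\gamma_*\tB^\alpha\neq\emptyset$, pick $\tw$ in it. By Lemma \ref{lem:s-interval} we have $E(\tw)\in\Pa\times\gamma\Fu$, in particular $\tw^-\in\Pa$, and
\[
s(\tw)\in[0,\epsilon^2]\cap\big(b_{\tw^-}^\gamma-t+[0,\alpha]\big).
\]
Since this intersection of two closed intervals is nonempty, comparing endpoints gives $b_{\tw^-}^\gamma-t\le\epsilon^2$ and $b_{\tw^-}^\gamma-t+\alpha\ge0$, that is,
\[
t-\alpha\le b_{\tw^-}^\gamma\le t+\epsilon^2.
\]

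Finally, applying Lemma \ref{lem:b-close} with the two points $\xi_\gamma^-,\tw^-\in\Pa$ (and $\gamma\in\Gamma_\theta(t,\alpha)$) gives $\big|\,|\gamma|-b_{\tw^-}^\gamma\,\big|<\epsilon^2$, so combining with the previous display yields $|\gamma|<t+2\epsilon^2$ and $|\gamma|>t-\alpha-\epsilon^2$, which is exactly the asserted inclusion $|\gamma|\in[t-\alpha-\epsilon^2,\,t+2\epsilon^2]$. I do not expect a genuine obstacle here: the proof is a short chaining of Lemmas \ref{lem:closing-0}, \ref{lem:endfix}, \ref{lem:b-period}, \ref{lem:s-interval}, and \ref{lem:b-close}. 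The only points requiring attention are (i) remembering that $\gamma\in\Gamma^*$ is precisely what forces $\xi_\gamma^-\in\Pa$, which is needed to invoke Lemma \ref{lem:b-close}, and (ii) correctly extracting the two one-sided inequalities from the nonemptiness of the interval intersection.
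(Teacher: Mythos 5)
Your proof is correct and follows the same route as the paper: use Lemma \ref{lem:s-interval} to pin $b_{\tw^-}^\gamma$ in $[t-\alpha,t+\epsilon^2]$, then combine Lemmas \ref{lem:b-period} and \ref{lem:b-close} to transfer this to $|\gamma|$ at a cost of $\epsilon^2$. Your explicit use of Lemmas \ref{lem:closing-0} and \ref{lem:endfix} to place the axis endpoint $\xi_\gamma^-$ in $\Pa$ (so that Lemma \ref{lem:b-close} applies) is a detail the paper leaves implicit, and it is exactly the right justification.
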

\begin{proof}
Since $\tS \cap \phi^{-t} \gamma_* \tB^\alpha$ is nonempty,
Lemma \ref{lem:s-interval}  guarantees that there exist $\zeta\in \Pa$ and $\tau\in [0,\epsilon^2]$ such that $\tau \in b_\zeta^\gamma - t + [0,\alpha]$; this implies that
\[
b_\zeta^\gamma \in t + \tau - [0,\alpha] \subset [t-\alpha, t + \epsilon^2].
\]
From Lemmas \ref{lem:b-close} and \ref{lem:b-period}, we have
$|\gamma| \in [b_\zeta^\gamma - \epsilon^2, b_\zeta^\gamma + \epsilon^2]$, which proves the lemma.
\end{proof}

\subsection{A lower counting bound}\label{sec:lower-bound}

Now we can obtain a lower bound for $\#\Pi(t) = \#\Theta(\Pi(t))$, and hence for $\#C(t)$. 
We define
\begin{equation}\label{eqn:G'}
\Gamma'(t,\alpha) := \{\gamma\in\Gamma^*(t,\alpha) :
\gamma\neq \beta^n \text{ for any } \beta\in \Gamma, n\geq 2\}.
\end{equation}

\begin{lemma}\label{lem:lower-bound}
Let $\alpha = \epsilon - 4\epsilon^2$; then $\Theta(\Pi(t)) \supset \Gamma'(t-2\epsilon^2,\alpha)$, and from \eqref{eqn:Ct-nut-1} we obtain
\begin{equation}\label{eqn:lower-P-a}
\#C(t)
\geq \frac{\alpha}t \cdot \frac{\#\Gamma'(t-2\epsilon^2,\alpha)}{\nu_t(\hB^\alpha)}.
\end{equation}
\end{lemma}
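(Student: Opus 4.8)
The plan is to establish the set inclusion $\Gamma'(t-2\epsilon^2,\alpha)\subset\Theta(\Pi(t))$ and then read \eqref{eqn:lower-P-a} off from \eqref{eqn:Ct-nut-1}. So fix $\gamma\in\Gamma'(t-2\epsilon^2,\alpha)$; the goal is to manufacture some $\hv\in\Pi(t)$ with $\Theta(\hv)=\gamma$. The underlying idea is: use Lemma \ref{lem:closing-0} to get an axis of $\gamma$ whose endpoints lie in $\Pa\times\Fu$, transport this axis (via Lemma \ref{lem:hom-lift}) to a lift of the closed geodesic that $C(t)$ actually selected in the relevant free homotopy class, and then slide along the lifted axis to the tangent vector where the Hopf time $s(\cdot)$ vanishes; that vector, pushed down to $SM$, will be the required $\hv$.

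In detail, I would argue as follows. Since $\gamma\in\Gamma^*$, Lemma \ref{lem:closing-0} gives an axis $\tc$ of $\gamma$ with $\tc(-\infty)\in\Pa$ and $\tc(\infty)\in\Fu$, and by Lemma \ref{lem:ax} this $\tc$ has $\tc(r+|\gamma|)=\gamma\tc(r)$, so $\gamma$ is the axial isometry of $(\tc,|\gamma|)$. Since $\gamma\in\Gamma(t-2\epsilon^2,\alpha)$, Lemma \ref{lem:|g|-to-t} places $|\gamma|$ in $[(t-2\epsilon^2)-\alpha-\epsilon^2,\,(t-2\epsilon^2)+2\epsilon^2]$, which for $\alpha=\epsilon-4\epsilon^2$ is exactly $[t-\epsilon+\epsilon^2,\,t]\subset(t-\epsilon,t]$. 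Hence $\hc:=\pr\circ\tc$ is a closed geodesic of length $|\gamma|\in(t-\epsilon,t]$, so its free homotopy class is represented in the maximal set $C(t)$ by some $\hc'$, necessarily also of length $|\gamma|$. Lemma \ref{lem:hom-lift}, applied to the lift $\tc$ of $\hc$, then yields a lift $\tc'$ of $\hc'$ with the same axial isometry $\gamma$, hence the same endpoints, so $\tc'(-\infty)\in\Pa$ and $\tc'(\infty)\in\Fu$. Every $\dot\tc'(r)$ therefore has endpoints in $\Pa\times\Fu$ by \eqref{eqn:vpm}, while $r\mapsto s(\dot\tc'(r))$ is an affine bijection of $\R$ by \eqref{eqn:+t}; pick the unique $r_0$ with $s(\dot\tc'(r_0))=0$ and set $\tv:=\dot\tc'(r_0)\in H^{-1}(\Pa\times\Fu\times\{0\})\subset\tB_\theta^\alpha$ and $\hv:=\pr_*\tv=\dot{\hc'}(r_0)$. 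Then $\hv\in\Pi(t)$ by \eqref{eqn:Pit}; $\tv$ is the unique lift of $\hv$ in $\tB_\theta^\alpha$, since $\pr_*$ is injective there (Lemma \ref{lem:inj} and $\epsilon\le\tfrac14\inj(M)$); and $\ell(\hv)=|\gamma|$ because $|\gamma|$ is a period of $\hv$ lying in $(t-\epsilon,t]$ while the minimal period of the orbit of $\hv$ is $\ge 2\inj(M)>\epsilon$, so no interval of length $\epsilon$ contains two periods. Thus $\Theta(\hv)$, the axial isometry of $(\tc_{\tv},\ell(\hv))=(\tc',|\gamma|)$, is $\gamma$, as desired.

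Granting the inclusion, \eqref{eqn:lower-P-a} is immediate: the fibers $\Theta^{-1}(\gamma)$ for $\gamma\in\Gamma'(t-2\epsilon^2,\alpha)$ are nonempty and pairwise disjoint subsets of $\Pi(t)$, so $\#\Pi(t)\ge\#\Gamma'(t-2\epsilon^2,\alpha)$, and substituting this into \eqref{eqn:Ct-nut-1} with $\alpha=\epsilon-4\epsilon^2$ gives the stated bound.

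The one genuinely delicate step is the transport in the second paragraph: the axis $\tc$ of $\gamma$ need not project to an element of $C(t)$ — its free homotopy class may not be the one $C(t)$ chose, and for non-primitive $\gamma$ there may even be several inequivalent axes — so one must move to the selected representative $\hc'$ without losing control of the endpoints at infinity. Lemma \ref{lem:hom-lift}, together with the fact that freely homotopic closed geodesics have equal length, is exactly the tool for this, but it must be applied with $\hc'$ carrying period $|\gamma|$ rather than a proper divisor. The only other point requiring care is purely arithmetic: it is the specific pairing of $\alpha=\epsilon-4\epsilon^2$ with the shift $t\mapsto t-2\epsilon^2$ that makes Lemma \ref{lem:|g|-to-t} land $|\gamma|$ in $(t-\epsilon,t]$, which is what lets $\hc'$ qualify for $C(t)$ and forces $\ell(\hv)=|\gamma|$.
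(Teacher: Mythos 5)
Your proof is correct and follows essentially the same route as the paper: Lemma \ref{lem:closing-0} produces an axis of $\gamma$ through $H^{-1}(\Pa\times\Fu\times\{0\})$, Lemma \ref{lem:|g|-to-t} with the shift $t\mapsto t-2\epsilon^2$ and $\alpha=\epsilon-4\epsilon^2$ pins $|\gamma|$ into $(t-\epsilon,t]$, and Lemmas \ref{lem:hom-lift} and \ref{lem:2-axes} transport the axis to a lift of the representative chosen by $C(t)$ so that $\Theta$ hits $\gamma$. Your extra care about sliding to $s=0$ and about the uniqueness of the period in an $\epsilon$-window (via $2\inj(M)>\epsilon$) just makes explicit what the paper leaves implicit.
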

\begin{proof}
By Lemma \ref{lem:closing-0}, given any $\gamma\in \Gamma'(t-2\epsilon^2,\alpha)$, there exists $\tv\in H^{-1}(\Pa\times\Fu\times\{0\})$ such that $\phi^{|\gamma|}\tv = \gamma_* \tv$. By Lemma \ref{lem:|g|-to-t}, we have
\begin{align*}
|\gamma| &\geq (t-2\epsilon^2) - \alpha - \epsilon^2
= t-3\epsilon^2 - (\epsilon - 4\epsilon^2) > t-\epsilon, \\
|\gamma| &\leq (t-2\epsilon^2) + 2\epsilon^2 = t,
\end{align*}
and thus $\hv := {\pr_*}\tv \in SM$ lies on a closed geodesic $\hc_{\hv}$ with length $|\gamma| \in (t-\epsilon,t]$ (this uses the assumption that $\gamma$ is not a nontrivial power of another isometry).
By Lemmas \ref{lem:hom-lift} and \ref{lem:2-axes}, every closed geodesic in the free homotopy class of $\hc_{\hv}$ has the same period, so we choose $\hc\in C(t)$ that is homotopic to $\hc_{\hv}$, and note from Lemma \ref{lem:hom-lift} that there is a lift $\tc$ with $\tw := \dot{\tc}(0) \in H^{-1}(\Pa\times\Fu\times\{0\})$. It follows from \eqref{eqn:Pit} that $\hw := {\pr_*} \tw \in \Pi(t)$, and from Definition \ref{def:Theta} and irreducibility of $\gamma$ that $\Theta(w) = \gamma$,
which proves the lemma.
\end{proof}

Observe that $\Gamma^*(t-2\epsilon^2,\alpha) \setminus \Gamma'(t-2\epsilon^2,\alpha)
\subset \Gamma_2(\Pa,\Fu,t)$, so by Lemma \ref{lem:some-mult-small} we have
\[
\#\Gamma^*(t-2\epsilon^2,\alpha) - \#\Gamma'(t-2\epsilon^2,\alpha)                                                         
\leq K e^{\frac 23ht}.
\]
Using this together with Lemma \ref{lem:lower-bound} gives
\begin{equation}\label{eqn:lower-P}
\#C(t)
\geq \frac{\alpha}{t\nu_t(\hB^\alpha)} \big( \#\Gamma^*(t-2\epsilon^2,\alpha) - K e^{\frac 23 ht} \big),
\quad \alpha = \epsilon - 4\epsilon^2.
\end{equation}
From \eqref{eqn:upper-P} and \eqref{eqn:lower-P} we see that we must now estimate $\#\Gamma(t,\alpha)$ and $\#\Gamma^*(t,\alpha)$; we do this in the next section.
Then in \S\ref{sec:fin} we use these estimates to deduce the equidistribution result (Theorem \ref{thm:equidist}) and combine them with the lemmas from this section to obtain good estimates on $\#C(t)$. This yields estimates on $\#P(t)$ via Riemann sums, and sending $\epsilon\to 0$ yields integrals that we can evaluate to prove Theorem \ref{thm:main}.

\section{Consequences of scaling and mixing}\label{sec:mixing}

In what follows, it will be convenient to use the following notations, along with $f\sim g$:
\begin{align*}
f(t) = e^{\pm C} g(t) &\quad \Leftrightarrow\quad
e^{-C} g(t) \leq f(t) \leq e^C g(t) \text{ for all } t; \\
f(t) \lesssim g(t) &\quad\Leftrightarrow\quad
\limsup_{t\to\infty} \frac{f(t)}{g(t)} \leq 1; \\ 
f(t) \gtrsim g(t) &\quad\Leftrightarrow\quad
\liminf_{t\to\infty} \frac{f(t)}{g(t)} \geq 1; \\
f(t) \sim e^{\pm C} g(t) &\quad\Leftrightarrow\quad
e^{-C} g(t) \lesssim f(t) \lesssim e^C g(t).
\end{align*}

\subsection{Scaling}\label{sec:scaling}

Given $\alpha \leq \frac 32\epsilon$,
we will estimate $\#\Gamma(t,\alpha)$ and $\#\Gamma^*(t,\alpha)$ using the product structure, scaling properties, and mixing property of $\hm$. 
Note from Lemma \ref{lem:s-interval} that although $\tB$ and $\tS$ have a product structure given by \eqref{eqn:B} and \eqref{eqn:S}, the sets $\tS \cap \phi^{-t}\gamma_* \tB^\alpha$ do not always have such a structure. Using the formula in that lemma, though, we can give a sufficient condition for these intersections to have a product structure.

\begin{lemma}\label{lem:full-branch}
Given any $\alpha,t>0$ and $\gamma\in \Gamma^*(t,\alpha)$, we have
\[
\tS\cap \phi^{-(t+2\epsilon^2)}\gamma_* \tB^{\alpha+4\epsilon^2} = H^{-1}(\Pa\times\gamma\Fu\times[0,\epsilon^2])
=: \tS^\gamma.
\]
\end{lemma}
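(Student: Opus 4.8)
The plan is to reduce the claimed set-theoretic identity to an explicit one-variable inequality and then read that inequality off from the preceding lemmas. First I would apply Lemma~\ref{lem:s-interval} with $t$ replaced by $t+2\epsilon^2$ and $\alpha$ replaced by $\alpha+4\epsilon^2$; this expresses the left-hand side as the set of $\tw\in E^{-1}(\Pa\times\gamma\Fu)$ with
\[
s(\tw)\in[0,\epsilon^2]\cap\bigl(b_{\tw^-}^\gamma-(t+2\epsilon^2)+[0,\alpha+4\epsilon^2]\bigr).
\]
By the definition of the Hopf map, $\tS^\gamma=H^{-1}(\Pa\times\gamma\Fu\times[0,\epsilon^2])$ is precisely the set of $\tw\in E^{-1}(\Pa\times\gamma\Fu)$ with $s(\tw)\in[0,\epsilon^2]$, so the inclusion ``$\subset$'' is automatic and the content is the reverse inclusion. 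That amounts to showing the second interval imposes no extra constraint, i.e.\ that $[0,\epsilon^2]\subset b_{\tw^-}^\gamma-(t+2\epsilon^2)+[0,\alpha+4\epsilon^2]$ for every $\tw$ with $\tw^-\in\Pa$. Unwinding the endpoints, this is exactly the two-sided bound $b_\xi^\gamma\in[\,t-\alpha-\epsilon^2,\ t+2\epsilon^2\,]$, required to hold uniformly over all $\xi\in\Pa$.

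The key step is therefore to establish this uniform bound on $b_\xi^\gamma$. Here I would use that $\gamma\in\Gamma^*(t,\alpha)\subset\Gamma_\theta(t,\alpha)$, so $\tS\cap\phi^{-t}\gamma_*\tB^\alpha\neq\emptyset$; applying Lemma~\ref{lem:s-interval} once more, now to this unshifted intersection, produces some $\tw_0\in E^{-1}(\Pa\times\gamma\Fu)$ with $s(\tw_0)\in[0,\epsilon^2]$ and $s(\tw_0)\in b_{\tw_0^-}^\gamma-t+[0,\alpha]$, which forces $b_{\tw_0^-}^\gamma\in[t-\alpha,\,t+\epsilon^2]$. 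Since $\tw_0^-\in\Pa$ and $\gamma\in\Gamma_\theta(t,\alpha)$, Lemma~\ref{lem:b-close} gives $|b_\xi^\gamma-b_{\tw_0^-}^\gamma|<\epsilon^2$ for every $\xi\in\Pa$, whence $b_\xi^\gamma\in(t-\alpha-\epsilon^2,\,t+2\epsilon^2)$. This open interval lies inside the closed interval $[\,t-\alpha-\epsilon^2,\ t+2\epsilon^2\,]$ required above, so the reverse inclusion follows and the proof is complete. (One could instead bound $b_{\tw_0^-}^\gamma$ through Lemmas~\ref{lem:b-period} and~\ref{lem:|g|-to-t}, but the route via Lemma~\ref{lem:s-interval} is the most direct.)

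I do not expect a genuine obstacle beyond bookkeeping of constants: the whole point of shifting the flow time by $2\epsilon^2$ and enlarging the depth by $4\epsilon^2$ is precisely to absorb the $\epsilon^2$ of slack coming from the nonemptiness of the unenlarged intersection together with the $\epsilon^2$ variation of $\xi\mapsto b_\xi^\gamma$ over $\Pa$ quantified in Lemma~\ref{lem:b-close}. The only mild point to note is that the enlarged set $\tB^{\alpha+4\epsilon^2}$ enters the argument purely through the formula of Lemma~\ref{lem:s-interval}, so its depth parameter need not be constrained to the admissible range $(0,\tfrac32\epsilon]$ for this particular identity.
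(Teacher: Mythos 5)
Your proposal is correct and follows essentially the same route as the paper: both apply Lemma~\ref{lem:s-interval} to the nonempty unshifted intersection to pin down $b_{\eta}^\gamma$ for one $\eta\in\Pa$, transfer this to all $\xi\in\Pa$ via Lemma~\ref{lem:b-close}, and then apply Lemma~\ref{lem:s-interval} again with parameters $t+2\epsilon^2$ and $\alpha+4\epsilon^2$ to see that the resulting interval contains all of $[0,\epsilon^2]$. Your reformulation in terms of the explicit two-sided bound $b_\xi^\gamma\in[t-\alpha-\epsilon^2,\,t+2\epsilon^2]$ is just a transparent rephrasing of the paper's interval-containment bookkeeping.
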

\begin{proof}
By Lemma \ref{lem:s-interval}, the fact that $\tS \cap \phi^{-t} \gamma_* \tB^\alpha \neq\emptyset$ implies existence of $\eta \in \Pa$ such that
\[
(b_\eta^\gamma - t + [0,\alpha]) \cap [0,\epsilon^2] \neq \emptyset,
\]
from which we deduce that
\[
b_\eta^\gamma - t - \epsilon^2 + [0,\alpha+2\epsilon^2] \supset [0,\epsilon^2].
\]
By Lemma \ref{lem:b-close}, it follows that every $\xi\in \Pa$ has
\[
(b_\xi^\gamma - t - \epsilon^2 + [0,\alpha+2\epsilon^2]) \cap [0,\epsilon^2] \neq\emptyset,
\]
which in turn implies that
\[
b_\xi^\gamma - t - 2\epsilon^2 + [0,\alpha+4\epsilon^2] \supset [0,\epsilon^2].
\]
By Lemma \ref{lem:s-interval}, this completes the proof.
\end{proof}

Given $\gamma\in \Gamma^*$, let $\tS^\gamma := H^{-1}(\Pa\times\gamma\Fu\times[0,\epsilon^2])$ as in Lemma \ref{lem:full-branch}, and write $\hS^\gamma = {\pr_*}\tS^\gamma \subset SM$.

\begin{lemma}\label{lem:scaling}
For each $\gamma \in \Gamma^*$, we have
\[
\tm(\tS^\gamma) = e^{\pm 2 h \epsilon} e^{-h|\gamma|} \tm(\tS),
\]
and similarly with $\tm,\tS,\tS^\gamma$ replaced by $\hm,\hS,\hS^\gamma$.
\end{lemma}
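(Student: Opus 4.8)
The plan is to compute $\tm(\tS^\gamma)$ directly from the Kaimanovich-type formula \eqref{eqn:Kaim} together with the definition \eqref{eqn:barmu} of $\bar\mu$, exploiting the product structure $\tS^\gamma = H^{-1}(\Pa\times\gamma\Fu\times[0,\epsilon^2])$.

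First I would unravel the definitions. By \eqref{eqn:Kaim}, $\tm(\tS^\gamma) = \int_{\sqbd}\lambda_{\xi,\eta}(\tS^\gamma)\,d\bar\mu(\xi,\eta)$, and since $\tS^\gamma$ consists exactly of those $\tw$ with $(\tw^-,\tw^+)\in\Pa\times\gamma\Fu$ and $s(\tw)\in[0,\epsilon^2]$, the inner integrand $\lambda_{\xi,\eta}(\tS^\gamma)$ vanishes unless $(\xi,\eta)\in\Pa\times\gamma\Fu$, in which case it equals the length of the sub-segment of the (a.e.\ unique) geodesic $V(\xi,\eta)$ on which $s$ takes values in $[0,\epsilon^2]$. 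By \eqref{eqn:+t}, $s$ increases at unit speed along the flow, so this length is exactly $\epsilon^2$ whenever the geodesic $V(\xi,\eta)$ actually meets the level set $\{s=0\}$ — which it does, since $s$ is an unbounded increasing function along any complete geodesic. Hence $\lambda_{\xi,\eta}(\tS^\gamma) = \epsilon^2 \cdot \mathbf{1}_{\Pa\times\gamma\Fu}(\xi,\eta)$ for $\bar\mu$-a.e.\ $(\xi,\eta)$, giving $\tm(\tS^\gamma) = \epsilon^2\,\bar\mu(\Pa\times\gamma\Fu)$, and in the same way $\tm(\tS) = \epsilon^2\,\bar\mu(\Pa\times\Fu)$. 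So the problem reduces to showing $\bar\mu(\Pa\times\gamma\Fu) = e^{\pm 2h\epsilon}e^{-h|\gamma|}\bar\mu(\Pa\times\Fu)$.

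Next I would use the $\Gamma$-invariance of $\bar\mu$ (Lemma \ref{5.4.A}) to write $\bar\mu(\Pa\times\gamma\Fu) = \bar\mu(\gamma^{-1}\Pa\times\Fu)$; since $\gamma\in\Gamma^*$ we have $\gamma^{-1}\Pa\subset\Pa$, so this is $\bar\mu$ of a subset of $\Pa\times\Fu$ whose "$\Pa$-side" has shrunk. To compare with $\bar\mu(\Pa\times\Fu)$ I would instead work with the conformal density directly, expanding $d\bar\mu(\xi,\eta) = e^{h\beta_p(\xi,\eta)}\,d\mu_p(\xi)\,d\mu_p(\eta)$ and the transformation rules \eqref{PS-b}, \eqref{PS-c}. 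Pushing forward by $\gamma$ and using that $\beta_{\gamma p} = \beta_p\circ\gamma$ (by the cocycle identity, since $\beta$ is built from Busemann functions) one finds $d(\gamma_*\mu_p)(\xi) = e^{-hb_\xi(\gamma^{-1}p,p)}\,d\mu_p(\xi) = e^{-hb_{\gamma\xi}^\gamma\circ\gamma}$-type factors; carrying this through, $\bar\mu(\Pa\times\gamma\Fu)$ picks up a weight of the form $e^{-h(b_\xi^\gamma + b_\eta^{\gamma})}$ integrated over $(\xi,\eta)\in\Pa\times\Fu$ against $d\bar\mu$ — i.e.\ a weighted version of $\bar\mu(\Pa\times\Fu)$. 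The key point is then that this weight is nearly constant: by Lemmas \ref{lem:b-close} and \ref{lem:b-period} (and the fact that $\gamma\in\Gamma^*\subset\Gamma(t,\alpha)$ for the relevant $t$, or rather that one can arrange the hypotheses of Lemma \ref{lem:b-close} directly from $\gamma\in\Gamma^*$ via Lemma \ref{lem:closing-0} giving an axis with endpoints in $\Pa\times\Fu$), we have $b_\xi^\gamma = |\gamma| \pm \epsilon^2$ for all $\xi\in\Pa$, and symmetrically for $\eta\in\Fu$ using the reversed version of Lemma \ref{lem:eps'}. Therefore the weight $e^{-h(b_\xi^\gamma+b_\eta^\gamma)}$ lies in $e^{-h|\gamma|}\cdot e^{\pm 2h\epsilon^2} \subset e^{-h|\gamma|}\cdot e^{\pm 2h\epsilon}$, and integrating this against $d\bar\mu$ over $\Pa\times\Fu$ yields $\bar\mu(\Pa\times\gamma\Fu) = e^{\pm 2h\epsilon}e^{-h|\gamma|}\bar\mu(\Pa\times\Fu)$, as desired.

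Finally, the statement for $\hm,\hS,\hS^\gamma$ follows from the relation \eqref{eqn:hmtm} between $\tm$ and $\hm$ together with the injectivity of $\pr_*$ on $\tB$ (hence on $\tS$ and on $\tS^\gamma$, using Lemma \ref{lem:inj} and $\epsilon<\frac14\inj(M)$): on these sets $\#(\pr_*^{-1}(\hv)\cap\tS^\gamma)\in\{0,1\}$, so $\hm(\hS^\gamma) = \tm(\tS^\gamma)$ and $\hm(\hS) = \tm(\tS)$, and the scaling identity transfers verbatim. The main obstacle I anticipate is the bookkeeping in the middle step: correctly tracking how the Radon--Nikodym cocycle \eqref{PS-c} and the $\beta_p$ factor in \eqref{eqn:barmu} combine under the pushforward by $\gamma$, and verifying that the resulting weight is exactly $e^{-h(b_\xi^\gamma + b_\eta^\gamma)}$ so that Lemmas \ref{lem:b-close} and \ref{lem:b-period} apply cleanly; the geometric inputs (that $s$ is increasing and unbounded, that $\gamma\in\Gamma^*$ supplies an axis) are comparatively routine.
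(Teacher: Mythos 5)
Your overall strategy matches the paper's --- reduce via the product structure \eqref{eqn:Kaim} to showing $\bar\mu(\Pa\times\gamma\Fu) = e^{\pm2h\epsilon}e^{-h|\gamma|}\bar\mu(\Pa\times\Fu)$, then exploit the conformal density --- and your first and last steps (the identities $\tm(\tS^\gamma)=\epsilon^2\bar\mu(\Pa\times\gamma\Fu)$, $\tm(\tS)=\epsilon^2\bar\mu(\Pa\times\Fu)$, and the transfer to $\hm$ via injectivity of $\pr_*$) are fine. But the middle step, which you yourself flagged as the risky one, contains a genuine error: the weight relating the two integrals is not $e^{-h(b_\xi^\gamma+b_\eta^\gamma)}$. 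The set $\Pa\times\gamma\Fu$ is transformed by $\gamma$ in the \emph{second} coordinate only, so after the substitution $\eta=\gamma\eta'$ exactly one Radon--Nikodym factor appears, namely $\frac{d\mu_{\gamma^{-1}p}}{d\mu_p}(\eta') = e^{-hb_{\eta'}(\gamma^{-1}p,\,p)} = e^{-hb_{\eta'}^{\gamma^{-1}}}$ (note: $\gamma^{-1}$, not $\gamma$), coming from the equivariance identity $\mu_p(\gamma\Fu)=\mu_{\gamma^{-1}p}(\Fu)$. Since $\Fu$ is a neighborhood of $\xi_\gamma^+=\xi_{\gamma^{-1}}^-$, Lemma \ref{lem:b-period} applied to $\gamma^{-1}$ gives $b_{\eta'}^{\gamma^{-1}}=|\gamma|\pm\epsilon^2$, and this single factor is what produces $e^{-h|\gamma|}$. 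By contrast, for $\eta\in\Fu$ one has $b_\eta^\gamma=b_\eta(\gamma p,p)\approx-|\gamma|$ (attracting fixed point), so your proposed weight $e^{-h(b_\xi^\gamma+b_\eta^\gamma)}$ with $b_\xi^\gamma\approx+|\gamma|$ is approximately $1$, and carrying it through would yield $\bar\mu(\Pa\times\gamma\Fu)\approx\bar\mu(\Pa\times\Fu)$ rather than $e^{-h|\gamma|}\bar\mu(\Pa\times\Fu)$; your concluding arithmetic ``the weight lies in $e^{-h|\gamma|}e^{\pm2h\epsilon^2}$'' is not consistent with any reading of the two summands. The two-cocycle weight you wrote down is the one that appears when \emph{both} coordinates are transformed, which by Lemma \ref{5.4.A} merely reproduces the invariance $\bar\mu(\gamma\Pa\times\gamma\Fu)=\bar\mu(\Pa\times\Fu)$ and gives no information here.

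A second, smaller omission: the factors $e^{h\beta_p(\xi,\eta)}$ in \eqref{eqn:barmu} do not cancel between the two integrals (the integrand over $\Pa\times\gamma\Fu$ involves $\beta_p(\xi,\gamma\eta')$, not $\beta_p(\xi,\eta')$), so they must be bounded separately. The paper does this by choosing $q$ on the geodesic joining $\xi$ and $\eta$ with $b_\xi(q,p)=0$ and invoking Lemma \ref{lem:inj0} to get $|\beta_p(\xi,\eta)|\le d(q,p)<\epsilon/2$ on all of $\Pa\times\Fu$ (hence also on $\Pa\times\gamma\Fu\subset\Pa\times\Fu$ since $\gamma\in\Gamma^*$); this is precisely where the $e^{\pm h\epsilon}$ in the ratio comes from. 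Your final bound $e^{\pm2h\epsilon^2}\subset e^{\pm2h\epsilon}$ accounts only for the Busemann fluctuation from Lemma \ref{lem:eps'} and silently drops this contribution. With these two corrections your argument becomes the paper's proof.
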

\begin{proof}
Since $\hm(\hS^\gamma) = \tm(\tS^\gamma) = \epsilon^2 \bar\mu(\Pa\times\gamma\Fu)$, and $\hm(\hS) = \tm(\tS) = \epsilon^2\bar\mu(\Pa\times\Fu)$, it suffices to show that $\bar\mu(\Pa\times\gamma\Fu) = e^{\pm 2h\epsilon} e^{-h|\gamma|} \bar\mu(\Pa\times\Fu)$.

We will use the definition of $\bar\mu$ in \eqref{eqn:barmu}
and the scaling properties of the conformal measure in Definition \ref{def:conformal}; thus we need to control $\beta_p(\xi,\eta)$ for $(\xi,\eta)\in\Pa\times\Fu$, and $b_\eta(\gamma^{-1} p,p)$ for $\eta\in\Fu$. The former will be close to $0$, and the latter will be close to $|\gamma|$.

Indeed, given $(\xi,\eta)\in\Pa\times\Fu$, we can take $q$ to lie on a geodesic connecting $\xi$ and $\eta$, with $b_\xi(q,p)=0$; then \eqref{eqn:beta-p} gives
\[
|\beta_p(\xi,\eta)| = |b_\xi(q,p) + b_\eta(q,p)|
\leq d(q,p) < \epsilon/2,
\]
where the last inequality uses Lemma \ref{lem:inj0}. Using this together with \eqref{eqn:barmu} gives
\[
\bar\mu(\Pa\times\Fu) = e^{\pm h \epsilon/2} \mu_p(\Pa) \mu_p(\Fu)
\text{ and }
\bar\mu(\Pa\times\gamma\Fu) = e^{\pm h \epsilon/2} \mu_p(\Pa) \mu_p(\gamma\Fu),
\]
and thus
\begin{equation}\label{eqn:ratio1}
\frac{\bar\mu(\Pa\times\gamma\Fu)}{\bar\mu(\Pa\times\Fu)} = e^{\pm h\epsilon} \frac{\mu_p(\gamma\Fu)}{\mu_p(\Fu)}.
\end{equation}
We will estimate the latter ratio using Definition \ref{def:conformal}, whose Property \ref{PS-b} gives
\[
\mu_p(\gamma \Fu) =  \mu_{\gamma^{-1} p}(\Fu),
\]
and whose Property \ref{PS-c} gives
\[
\frac{d\mu_{\gamma^{-1} p}}{d\mu_p}(\eta) 
= e^{-h b_\eta(\gamma^{-1} p,p)}.
\]
When $\eta = \tc(-\infty)$, where $\tc$ is the axis for $\gamma^{-1}$, we have $\eta\in \Fu$ because $\gamma\in \Gamma^*$, and $b_\eta(\gamma^{-1} p,p) = |\gamma^{-1}| = |\gamma|$ by Lemma \ref{lem:b-period}. For other choices of $\eta\in\Fu$, Lemma \ref{lem:eps'} implies that the value of $b_\eta(\gamma^{-1}p,p)$ varies by at most $\epsilon^2$. We conclude that $\mu_p(\gamma\Fu) = e^{\pm\epsilon^2} e^{-h|\gamma|} \mu_p(\Fu)$, and together with \eqref{eqn:ratio1} this proves the lemma.
\end{proof}

From Lemmas \ref{lem:|g|-to-t} and \ref{lem:scaling} we immediately deduce the following.

\begin{corollary}\label{cor:G*t-scaling}
Given $\alpha\leq \frac 32\epsilon$ and $\gamma\in\Gamma^*(t,\alpha)$, we have $\big|t-|\gamma|\big| \leq 2\epsilon$, and thus $\tm(\tS^\gamma) = e^{\pm 4h\epsilon} e^{-ht}\tm(\tS)$, and similarly for $\hm,\hS,\hS^\gamma$.
\end{corollary}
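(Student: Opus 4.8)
The plan is to obtain the corollary by combining the two immediately preceding lemmas with elementary bookkeeping on the parameters, using the standing assumptions $\alpha \le \frac32\epsilon$ and $\epsilon \le \frac18$. First I would invoke Lemma \ref{lem:|g|-to-t}: since $\gamma \in \Gamma^*(t,\alpha)$, we have $|\gamma| \in [t - \alpha - \epsilon^2, t + 2\epsilon^2]$. Because $\epsilon \le \frac18$ gives $\epsilon^2 \le \frac18\epsilon$, the lower endpoint satisfies $\alpha + \epsilon^2 \le \frac32\epsilon + \frac18\epsilon = \frac{13}{8}\epsilon < 2\epsilon$, while the upper endpoint satisfies $2\epsilon^2 < 2\epsilon$; hence $\big|\,t - |\gamma|\,\big| \le 2\epsilon$, which is the first assertion.

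Next I would use this bound to replace $|\gamma|$ by $t$ in the scaling exponent. Since $h\big|\,|\gamma| - t\,\big| \le 2h\epsilon$, we have $e^{-h|\gamma|} = e^{\pm 2h\epsilon}\, e^{-ht}$ in the notation of \S\ref{sec:mixing}. Note that $\gamma \in \Gamma^*(t,\alpha) \subset \Gamma^*$, so Lemma \ref{lem:scaling} applies and gives $\tm(\tS^\gamma) = e^{\pm 2h\epsilon}\, e^{-h|\gamma|}\,\tm(\tS)$. Substituting the previous identity and composing the two $e^{\pm 2h\epsilon}$ factors yields $\tm(\tS^\gamma) = e^{\pm 4h\epsilon}\, e^{-ht}\,\tm(\tS)$. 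The identical computation with $\hm, \hS, \hS^\gamma$ in place of $\tm, \tS, \tS^\gamma$ — again legitimate since Lemma \ref{lem:scaling} is stated for both — gives the corresponding statement on $SM$, completing the proof.

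I do not anticipate any real obstacle here: the argument is purely a matter of tracking constants. The only points requiring minor care are (i) verifying that $\alpha + \epsilon^2 \le 2\epsilon$, which genuinely uses the standing bound $\epsilon \le \frac18$ rather than just $\alpha \le \frac32\epsilon$, and (ii) observing that $\Gamma^*(t,\alpha)$ is contained in $\Gamma^*$ so that Lemma \ref{lem:scaling} is available. Everything else is a one-line substitution.
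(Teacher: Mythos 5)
Your proposal is correct and follows exactly the route the paper intends: the corollary is stated as an immediate consequence of Lemmas \ref{lem:|g|-to-t} and \ref{lem:scaling}, and your bookkeeping (bounding $\alpha+\epsilon^2$ and $2\epsilon^2$ by $2\epsilon$ using $\alpha\le\frac32\epsilon$ and $\epsilon\le\frac18$, then composing the two $e^{\pm 2h\epsilon}$ factors) is precisely the intended argument. No gaps.
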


\subsection{Asymptotic estimates}\label{sec:asymp-gam}

It follows from Lemmas \ref{lem:c} and \ref{lem:full-branch} that given any $\alpha\in (0,\frac 32\epsilon]$ and $\rho\in (0,\theta)$, for all sufficiently large $t$ we have
\[
\hS_\rho \cap \phi^{-t} \hB_\rho^\alpha
\subset \bigcup_{\gamma\in \Gamma^*(t,\alpha)} \hS_\theta^\gamma
\subset \hS_\theta \cap \phi^{-(t+2\epsilon^2)} \hB_\theta^{\alpha+4\epsilon^2}.
\]
Using Corollary \ref{cor:G*t-scaling} gives
\begin{equation}\label{eqn:G*-Sg}
\hm(\hS_\theta^\gamma) = e^{\pm 4h\epsilon} e^{-ht} \hm(\hS_\theta) 
\end{equation}
for all $\gamma\in \Gamma^*(t)$, and thus
\begin{multline*}
e^{-4h\epsilon} \hm(\hS_\rho \cap \phi^{-t} \hB_\rho^\alpha)
\leq \#\Gamma^*(t,\alpha) e^{-ht}\hm(\hS_\theta) \\
\leq e^{4h\epsilon} \hm(\hS_\theta \cap \phi^{-(t+2\epsilon^2)} \hB_\theta^{\alpha+4\epsilon^2}).
\end{multline*}
Sending $t\to\infty$, using mixing, and dividing through by $\hm(\hS_\theta)\hm(\hB_\theta^\alpha) = \tm(\tS_\theta)\tm(\tB_\theta^\alpha)$, we get
\[
e^{-4h\epsilon}\frac{\tm(S_\rho)}{\tm(S_\theta)} \frac{\tm(\tB_\rho^\alpha)}{\tm(\tB_\theta^\alpha)}
\lesssim \frac{\#\Gamma^*(t,\alpha)}{e^{ht}\tm(\tB_\theta^\alpha)}
\lesssim e^{4h\epsilon} \frac{\tm(\tB_\theta^{\alpha+4\epsilon^2})}{\tm(\tB_\theta^\alpha)}
\]
By \eqref{eqn:SB-cty}, $\theta$ is a point of continuity for $\rho\mapsto \tm(\tS_\rho)$ and $\rho\mapsto \tm(\tB_\rho^\alpha)$, so we can send $\rho\nearrow \theta$ and obtain
\begin{equation}\label{eqn:counting-bounds}
e^{-4h\epsilon}
\lesssim \frac{\#\Gamma^*(t,\alpha)}{e^{ht}\tm(\tB_\theta^\alpha)}
\lesssim e^{4h\epsilon} (1+4\epsilon^2/\alpha).
\end{equation}
We will also need to use \eqref{eqn:counting-bounds} with $\Gamma^*$ replaced by $\Gamma$. Observe that for every $\rho>\theta$, Lemma \ref{lem:c} gives 
$\Gamma_\theta^*(t,\alpha) \subset \Gamma_\theta(t,\alpha)
\subset \Gamma_\rho^*(t,\alpha)$ for all sufficiently large $t$, and thus \eqref{eqn:counting-bounds} gives
\begin{multline*}
e^{-4h\epsilon} e^{ht} \tm(\tB_\theta^\alpha)
\lesssim \#\Gamma_\theta^*(t,\alpha)
\lesssim \#\Gamma_\theta(t,\alpha) \\
\lesssim \#\Gamma_\rho^*(t,\alpha)
\lesssim e^{4h\epsilon}(1+4\epsilon^2/\alpha) \tm(\tB_\rho^\alpha)
e^{ht}
\end{multline*}
Sending $\rho\searrow \theta$ and using \eqref{eqn:SB-cty} gives
\begin{equation}\label{eqn:G-counting}
e^{-4h\epsilon} e^{ht} \tm(B_\theta^\alpha)
\lesssim \#\Gamma_\theta(t,\alpha)
\lesssim e^{4h\epsilon}(1+4\epsilon^2/\alpha) e^{ht} \tm(B_\theta^\alpha).
\end{equation}

\section{Completion of the proofs}\label{sec:fin}

\subsection{Equidistribution to the measure of maximal entropy}\label{sec:equidist}

The following is a standard result in ergodic theory; see for example the proof of \cite[Theorem 9.10]{Wa}.

\begin{lemma}\label{lem:mme}
Let $Y$ be a compact metric space and $\phi^t$ a continuous flow on $Y$. Fix $\epsilon>0$ and suppose that  $E_t \subset Y$ is a $(t,\epsilon)$-separated set for all sufficiently large $t$. Then the measures $\mu_t$ defined by
\begin{equation}\label{eqn:mut}
\mu_t(A) := \frac 1{\# E_t} \sum_{v\in E_t} \frac 1t\int_0^t \chi_A(\phi^s v) \,ds
\end{equation}
have the property that if $t_k\to\infty$ and the weak* limit $\mu = \lim_{k\to\infty} \mu_{t_k}$ exists, then $h_\mu(\phi^1) \geq \limsup_{k\to\infty} \frac 1{t_k} \log\# E_{t_k}$.
\end{lemma}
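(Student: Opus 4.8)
The plan is to adapt the standard argument that the topological entropy of a flow can be computed from the growth rate of separated sets, as in the proof of the variational principle. We start by reducing from the flow to its time-one map. Fix a finite measurable partition $\mathcal{Q}$ of $Y$ with $\diam \mathcal{Q} < \epsilon$ and with $\mu(\partial Q) = 0$ for every $Q\in\mathcal Q$; such a partition exists because $\mu$ is a regular Borel measure on a compact metric space. Since $E_{t_k}$ is $(t_k,\epsilon)$-separated, the points of $E_{t_k}$ lie in distinct atoms of the partition $\bigvee_{j=0}^{\lceil t_k\rceil-1} \phi^{-j}\mathcal Q$ (two points in the same atom would stay $\epsilon$-close for all integer times $0,1,\dots,\lceil t_k\rceil-1$, hence — using uniform continuity of $\phi^s$ for $s\in[0,1]$ to upgrade to all real times after slightly shrinking $\diam\mathcal Q$ at the outset — contradict separation). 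Hence $\log\#E_{t_k} \le H_{\text{measure on }E_{t_k}}\big(\bigvee_{j=0}^{\lceil t_k\rceil-1}\phi^{-j}\mathcal Q\big)$, where the measure on $E_{t_k}$ is the normalized counting measure $\frac1{\#E_{t_k}}\sum_{v\in E_{t_k}}\delta_v$.

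Next I would interpolate between the counting measure on $E_{t_k}$ and the measure $\mu_{t_k}$ from \eqref{eqn:mut}. The standard trick (see \cite[Theorem 9.10]{Wa}, \cite[Thm 8.2]{Wa}) is the following: for the time-one map $T=\phi^1$, write $n_k=\lceil t_k\rceil$, let $\sigma_k := \frac1{\#E_{t_k}}\sum_{v\in E_{t_k}}\delta_v$ and $\mu_{t_k}^{\mathcal Q\text{-disc}} := \frac1{n_k}\sum_{i=0}^{n_k-1} T^i_*\sigma_k$, which differs from $\mu_{t_k}$ only by the averaging over $[0,1]$ inside each time step and hence has the same weak* limit $\mu$ along $t_k$. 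Concavity of $H$ in the measure argument gives, for any fixed $m\in\N$ and all large $k$,
\[
\frac1{n_k}\log\#E_{t_k}\;\le\; \frac1m H_{\mu_{t_k}^{\mathcal Q\text{-disc}}}\Big(\bigvee_{j=0}^{m-1}T^{-j}\mathcal Q\Big) \;+\; \frac{2m\log\#\mathcal Q}{n_k}.
\]
Here the second term is the usual boundary correction from cutting $[0,n_k-1]$ into blocks of length $m$, and it $\to 0$ as $k\to\infty$ with $m$ fixed.

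Now I take $k\to\infty$. Because $\mu(\partial Q)=0$ for every atom, the partition $\bigvee_{j=0}^{m-1}T^{-j}\mathcal Q$ is a $\mu$-continuity partition, so $\mu_{t_k}^{\mathcal Q\text{-disc}} \to \mu$ weak* implies $H_{\mu_{t_k}^{\mathcal Q\text{-disc}}}(\bigvee_{j=0}^{m-1}T^{-j}\mathcal Q) \to H_\mu(\bigvee_{j=0}^{m-1}T^{-j}\mathcal Q)$. Since $n_k/t_k\to1$, we obtain
\[
\limsup_{k\to\infty}\frac1{t_k}\log\#E_{t_k}\;\le\;\frac1m H_\mu\Big(\bigvee_{j=0}^{m-1}T^{-j}\mathcal Q\Big).
\]
Letting $m\to\infty$ gives $\limsup_k \frac1{t_k}\log\#E_{t_k} \le h_\mu(T,\mathcal Q) \le h_\mu(T) = h_\mu(\phi^1)$, which is the claim. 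The one technical point to handle with care — and the main obstacle — is the passage between the discrete-time separation condition and the continuous-time definition of $\mu_t$: one must choose $\diam\mathcal Q$ small enough (using uniform continuity of $(s,v)\mapsto\phi^s v$ on $[0,1]\times Y$) that $(t,\epsilon)$-separation of $E_t$ forces the points to land in distinct atoms of the $\lceil t\rceil$-fold refinement under $T$, and simultaneously verify that replacing the integral $\frac1t\int_0^t$ by the Birkhoff-type sum $\frac1{n_k}\sum_{i<n_k}$ does not change the weak* limit. Both are routine but must be stated. Everything else is the textbook argument, which is why I would simply cite \cite[proof of Theorem 9.10]{Wa} for the remaining bookkeeping.
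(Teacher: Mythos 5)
The paper offers no proof of this lemma, deferring to the proof of \cite[Theorem 9.10]{Wa}; your proposal reconstructs essentially that standard argument, and most of it is sound. The reduction to the time-one map $T=\phi^1$ via a partition whose diameter is smaller than a uniform-continuity modulus $\delta$ for $(s,v)\mapsto\phi^s v$ on $[0,1]\times Y$, the observation that $(t,\epsilon)$-separated points then occupy distinct atoms of $\bigvee_{j=0}^{n_k-1}T^{-j}\mathcal{Q}$, the block/concavity estimate with error $2m\log\#\mathcal{Q}$, and the passage to the limit over partitions with null boundary are all correct and are exactly the intended route.

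The one genuine gap is the assertion that $\mu^{\mathcal{Q}\text{-disc}}_{t_k}=\frac1{n_k}\sum_{i=0}^{n_k-1}T^i_*\sigma_k$ ``has the same weak* limit $\mu$'' as $\mu_{t_k}$. This is false in general. For a test function $f$ one has $\mu_{t_k}(f)\approx\mu^{\mathcal{Q}\text{-disc}}_{t_k}(g)$ with $g(x)=\int_0^1 f(\phi^r x)\,dr$, so if $\mu^{\mathcal{Q}\text{-disc}}_{t_k}\to\nu$ along a subsequence, the correct relation is $\mu=\int_0^1\phi^r_*\nu\,dr$, and $\nu$ need not equal $\mu$: take $E_t$ a single point on a period-one orbit, where the discrete averages converge to a point mass while the continuous averages converge to arc length on the orbit. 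The repair is standard but must be made explicit: pass to a subsequence along which $\mu^{\mathcal{Q}\text{-disc}}_{t_k}\to\nu$, choose the partition to have $\nu$-null (not merely $\mu$-null) boundaries, run your argument to conclude $h_\nu(\phi^1)\geq\limsup_k\frac1{t_k}\log\#E_{t_k}$, and then transfer the bound to $\mu$ using $\mu=\int_0^1\phi^r_*\nu\,dr$ together with affinity of the entropy map and the identity $h_{\phi^r_*\nu}(\phi^1)=h_\nu(\phi^1)$, which holds because $\phi^r$ commutes with $\phi^1$. With that insertion the argument is complete and matches the cited proof.
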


In particular, if $\lim_{t\to\infty} \frac 1t \log \# E_t$ is the topological entropy of the flow, then every weak* limit point of the family of measures $\mu_t$ is a measure of maximal entropy.

\begin{corollary}\label{cor:mme}
Let $Y$ be a compact metric space and $\phi^t$ a continuous flow on $Y$ with a unique measure of maximal entropy $\mu$. Fix $\epsilon>0$. If $\{E_t\subset Y\}_{t}$ is a family of $(t,\epsilon)$-separated sets for which $\lim_{t\to\infty} \frac 1t \log \# E_t$ is the topological entropy of the flow, then the measures $\mu_t$ defined in \eqref{eqn:mut} converge to $\mu$ in the weak* topology.
\end{corollary}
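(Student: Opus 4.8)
The plan is to deduce the corollary from Lemma \ref{lem:mme} by a standard weak* compactness argument. Write $h$ for the topological entropy of $\phi^1$, and let $\mathcal{M}(Y)$ denote the space of Borel probability measures on $Y$, which is compact and metrizable in the weak* topology since $Y$ is a compact metric space. We want to show $\mu_t \to \mu$ as $t\to\infty$; by the subsequence criterion for convergence in a metrizable space, it suffices to show that every sequence $t_k\to\infty$ admits a subsequence $(t_{k_j})$ with $\mu_{t_{k_j}} \to \mu$.

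So fix $t_k\to\infty$. By compactness of $\mathcal{M}(Y)$, pass to a subsequence $(t_{k_j})$ along which $\mu_{t_{k_j}}$ converges weak* to some $\nu\in\mathcal{M}(Y)$. First I would check that $\nu$ is $\phi^t$-invariant: this is the usual computation showing that weak* limits of the orbital averages \eqref{eqn:mut} are flow-invariant, using that for a fixed continuous $f$ and fixed $s$, $\big|\int f\circ\phi^s\,d\mu_t - \int f\,d\mu_t\big| \le \frac{2s}{t}\|f\|_\infty \to 0$. Next, since $\lim_{t\to\infty}\frac1t\log\#E_t = h$ by hypothesis, Lemma \ref{lem:mme} applies and gives $h_\nu(\phi^1) \ge \limsup_{j\to\infty}\frac1{t_{k_j}}\log\#E_{t_{k_j}} = h$. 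By the variational principle for the topological entropy of the time-one map (equivalently of the flow), $h_\nu(\phi^1) \le h$, so in fact $h_\nu(\phi^1) = h$; that is, $\nu$ is a measure of maximal entropy. By the uniqueness hypothesis, $\nu = \mu$. Since the subsequence $(t_k)$ was arbitrary, this proves $\mu_t \to \mu$ in the weak* topology.

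There is no serious obstacle here; the argument is entirely routine. The only points requiring a line of care are the verification that the weak* limit $\nu$ is genuinely $\phi^t$-invariant (so that the entropy $h_\nu(\phi^1)$ is defined and the variational principle applies) and the reduction of convergence of the $t$-indexed family to the subsequence statement, both of which are standard. One could alternatively cite the cited source \cite{Wa} directly, but the short self-contained deduction above is cleaner.
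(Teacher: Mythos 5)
Your argument is correct and is exactly the (implicit) argument the paper intends: the corollary is stated without proof because it follows immediately from Lemma \ref{lem:mme} via weak* compactness of the space of probability measures, flow-invariance of limit points of the orbital averages, the variational principle, and the uniqueness hypothesis. The two points you flag as needing care (invariance of the limit, and the reduction to subsequences) are indeed the only details to check, and you handle both correctly.
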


\begin{proof}[Proof of Theorem \ref{thm:equidist}]
Let $C(t)$ be as in the statement of Theorem \ref{thm:equidist}. 
By \eqref{eqn:lower-P}, we have
\[
\#C(t) \geq \frac\alpha t \big(\#\Gamma^*(t-2\epsilon^2,\alpha) - K e^{\frac 23 ht}\big),
\quad \alpha = \epsilon - 4\epsilon^2,
\]
and then \eqref{eqn:counting-bounds} gives
\begin{equation}\label{eqn:llim-h}
\varliminf_{t\to\infty} \frac 1t \log \#C(t) \geq h.
\end{equation}
We claim that the set $\{ \dot\hc(0) : \hc\in C(t)\}$ is $(t,\epsilon)$-separated for any $\epsilon \in (0,\inj(M))$. Indeed, if it were not, then $C(t)$ would contain $\hc_0\neq \hc_1$ such that $d(\hc_0(t),\hc_1(t)) \leq \inj(M)$ for all $t\in\R$, and thus there would be lifts $\tc_0,\tc_1$ satisfying the same inequality, so that $\tc_0(\pm\infty)=\tc_1(\pm\infty)$. By Lemma \ref{lem:Preissman} there is $\gamma\in\Gamma$ such that the axial isometries of $\tc_0,\tc_1$ are both of the form $\gamma^k$. Since $\epsilon < \inj(M) < |\gamma|$, the value of $k$ is the same for both geodesics, so Lemma \ref{lem:2-axes} implies that $\hc_0$ and $\hc_1$ are homotopic, contradicting our assumption.

We conclude that the limit in \eqref{eqn:llim-h} exists and is equal to $h$, and Corollary \ref{cor:mme} implies that the measures $\nu_t$ from \eqref{eqn:nut} converge to the unique measure of maximal entropy, which completes the proof of Theorem \ref{thm:equidist}.
\end{proof}

By Theorem \ref{thm:equidist} and \eqref{eqn:bdry-null}, we have $\nu_t(\hB^\epsilon) \to \hm(\hB^\epsilon) = \tm(\tB^\epsilon)$. Now \eqref{eqn:upper-P} and \eqref{eqn:G-counting} give
\begin{equation}\label{eqn:P-upper}
\#C(t) \lesssim \frac \epsilon t \cdot \frac{\#\Gamma(t,\epsilon) + K e^{\frac 23 ht}}{m(B^\epsilon)}
\lesssim e^{4h\epsilon}(1+4\epsilon)\frac\epsilon t e^{ht}.
\end{equation}
Similarly, \eqref{eqn:lower-P} and \eqref{eqn:counting-bounds} (with $\alpha = \epsilon - 4\epsilon^2$) give
\[
\#C(t) \gtrsim \frac {\epsilon-4\epsilon^2} t \cdot \frac{\#\Gamma^*(t-2\epsilon^2,\alpha) - K e^{\frac 23 ht}}{m(B^\alpha)}
\gtrsim (1-4\epsilon) e^{-4h\epsilon} \frac \epsilon te^{ht} e^{-2h\epsilon^2}.
\]
Combining this with 
\eqref{eqn:P-upper} gives
\begin{equation}\label{eqn:P-asymp}
\#C(t) \sim e^{\pm Q\epsilon} \frac \epsilon te^{ht}
\end{equation}
where $Q$ is a universal constant depending only on $h$.

\subsection{Riemann sums and integrals}\label{sec:completion}

To complete the proof, we use \eqref{eqn:P-asymp} to estimate $\#P(t)$ via a Riemann sum, and then send $\epsilon\to 0$ to convert this to an integral and send the size of the error to $0$. See \cite[\S13]{Ri} for a more axiomatic approach to this step.

Since $\#P(T) \to \infty$ as $T\to\infty$, and each $\#P(b)$ is finite, we see that $\#P(T) \sim \#(P(T) \setminus P(b))$ for every $b>0$.
Summing over $t_k = T - k\epsilon$ gives
\begin{equation}\label{eqn:R-sum}
\sum_{k=0}^{\lfloor(T-b)/\epsilon\rfloor}
\#C(t_k) 
\leq \#(P(T) \setminus P(b))
\leq \sum_{k=0}^{\lceil(T-b)/\epsilon\rceil}
\#C(t_k).
\end{equation}
By \eqref{eqn:P-asymp} we can choose $b$ sufficiently large that $\#C(t) = e^{\pm2Q\epsilon} \frac \epsilon t e^{ht}$ for all $t\geq b$, and thus \eqref{eqn:R-sum} gives
\begin{equation}\label{eqn:R-sum-2}
e^{-2Q\epsilon} 
\sum_{k=0}^{\lfloor(T-b)/\epsilon\rfloor}
\epsilon \frac{e^{ht_k}}{t_k}
\leq \#(P(T) \setminus P(b))
\leq 
e^{2Q\epsilon} \sum_{k=0}^{\lceil(T-b)/\epsilon\rceil}
\epsilon \frac{e^{ht_k}}{t_k}.
\end{equation}
Assume that $b$ is also chosen large enough that $t\mapsto \frac{e^{ht}}t$ is nondecreasing on $(b,\infty)$. Then the first sum in \eqref{eqn:R-sum-2} is an upper bound for $\int_b^T \frac{e^{ht}}t\,dt$, and the second sum is a lower bound for 
$\int_b^{T+\epsilon} \frac{e^{ht}}t\,dt$, so we conclude that
\begin{equation}\label{eqn:int-bounds}
e^{-2Q\epsilon} \int_b^T \frac{e^{ht}}t\,dt
\leq \#(P(T) \setminus P(b))
\leq e^{2Q\epsilon} \int_b^{T+\epsilon} \frac{e^{ht}}t\,dt.
\end{equation}
Integrating by parts gives
\begin{equation}\label{eqn:parts-1}
\int_b^T \frac {e^{ht}}t \,dt
= \frac{e^{ht}}{ht}\Big|_b^T + \int_b^T \frac{e^{ht}}{ht^2}\,dt
\geq \frac{e^{hT}}{hT} - \frac{e^{hb}}{hb},
\end{equation}
and similarly
\[
\int_b^{T+\epsilon} \frac {e^{ht}}t \,dt
= \frac{e^{ht}}{ht}\Big|_b^{T+\epsilon} + \int_b^{T+\epsilon} \frac{e^{ht}}{ht^2}\,dt
\leq \frac{e^{h(T+\epsilon)}}{h(T+\epsilon)} 
+ \frac{1}{hb} \int_b^{T+\epsilon} \frac{e^{ht}}{t} \,dt,
\]
which yields
\[
\Big( 1 - \frac 1{hb}\Big) \int_b^{T+\epsilon} \frac{e^{ht}}t\,dt
\leq e^{h\epsilon} \frac{e^{hT}}{hT}.
\]
Choosing $b$ sufficiently large that $1-\frac 1{hb} \geq e^{-h\epsilon}$, we can combine this with \eqref{eqn:int-bounds} and \eqref{eqn:parts-1} to obtain
\[
e^{-2Q\epsilon} \Big(\frac{e^{hT}}{hT} - \frac{e^{hb}}{hb}\Big)
\leq \  \#(P(T) \setminus P(b))\leq e^{2Q\epsilon} e^{2h\epsilon}\frac{e^{hT}}{hT}.
\]
Sending $T\to\infty$ gives
\[
\#P(T) \sim \#(P(T) \setminus P(b)) \sim e^{\pm 2(Q+h)\epsilon} \frac{e^{hT}}{hT}.
\]
Since $\epsilon>0$ can be arbitrarily small, this implies \eqref{eqn:margulis} and completes the proof.

\appendix
\section{Proofs of basic geometric results}\label{sec:geom-pf}


\begin{proof}[Proof of Lemma \ref{lem:hom-lift}]
Let $\{\hc_s \colon \R/\ell_s\Z \to M\}_{s\in[0,1]}$ be a homotopy between $\hc_0$ and $\hc_1$. Lifting to a homotopy $\{\tc_s \colon \R\to X\}_{s\in [0,1]}$ we observe that for each $s\in [0,1]$ there is $\gamma_s\in \Gamma$ such that $\tc_s(t+\ell_s) = \gamma_s \tc_s(t)$ for all $t\in\R$. Moreover, $s\mapsto \gamma_s$ is continuous, hence constant, so $\gamma_1 = \gamma_0$, which proves the lemma.
\end{proof}


\begin{proof}[Proof of Lemma \ref{lem:2-axes}]
Since $\gamma$ acts isometrically, we have 
\[
d(\tc_0(0),\tc_1(0)) = d(\gamma^n \tc_0(0), \gamma^n \tc_1(0))
= d(\tc_0(n\ell_0),\tc_1(n\ell_1))
\]
for all $n\in\N$, and the triangle inequality gives
\begin{align*}
n|\ell_1 - \ell_0| &= d(\tc_1(n\ell_1),\tc_1(n\ell_0)) \\
&\leq d(\tc_1(n\ell_1),\tc_0(n\ell_0))
+ d(\tc_0(n\ell_0),\tc_1(n\ell_0)) \\
&\leq d(\tc_0(0),\tc_1(0)) + d(\tc_0(n\ell_0),\tc_1(n\ell_0)).
\end{align*}
Lemma \ref{lem:endfix} gives $\tc_1(\infty) = \tc_0(\infty)$, so $\sup_{n} d(\tc_0(n\ell_0),\tc_1(n\ell_0)) < \infty$, and we conclude that $\sup_{n} n|\ell_1 - \ell_0| < \infty$, which implies $\ell_1 = \ell_0$. Applying this result to the geodesic shows that the common value is $|\gamma|$.

To prove that $\hc_0$ and $\hc_1$ lie in the same free homotopy class, let $s\mapsto \tc_s(0)$ be any path from $\tc_0(0)$ to $\tc_1(0)$, and define $\tc_s(|\gamma|) := \gamma \tc_s(0)$. This defines $\tc_s(t)$ as a continuous function of $(s,t)$ on the boundary of $[0,1]\times [0,|\gamma|]$. Since $X$ is simply connected this extends to a continuous map on all of $[0,1]\times [0,|\gamma|]$, and then to $[0,1]\times \R$ by defining $\tc_s(t\pm|\gamma|) := \gamma^{\pm 1} \tc_s(t)$; this gives the desired homotopy.
\end{proof}


\begin{proof}[Proof of Lemma \ref{lem:cpt}]
By continuity of $H$, it suffices to show that $H^{-1}(\Pa\times\Fu\times\{0\})$ is bounded.
Let $H_0$ be the Hopf map for the background metric $g_0$.  It is a homeomorphism, so $H_0^{-1}(\Pa\times\Fu\times\{0\})$ is compact, hence bounded. Using the Morse lemma (see in particular \cite[Lemma 2.4]{CKW}), $H_0^{-1} \circ H \colon SX\to SX$ has the property that there is $R>0$ such that $d(H_0^{-1}\circ H(v),v) \leq R$ for all $v\in SX$, and $H^{-1}(\Pa\times\Fu\times \{0\})$ is contained in the $R$-ball around the bounded set $H_0^{-1}(\Pa\times\Fu\times\{0\})$.
\end{proof}


\begin{proof}[Proof of Lemma \ref{lem:inj0}]
Assume that this is not the case. 
Then there exist sequences $\theta_n \searrow 0$ and $v_n,w_n \in H^{-1}(\Pa_{\theta_n}\times\Fu_{\theta_n}\times \{0\})$ such that $d(\pi v_n, \pi w_n) \geq \frac\epsilon2$. 
By Lemma \ref{lem:cpt}, $H^{-1}(\Pa_{\theta_n}\times\Fu_{\theta_n}\times \{0\})$ is compact, so there is a subsequence $n_k\to\infty$ such that $v_{n_k} \to v$ and $w_{n_k} \to w$, with $d(v,w) \geq \frac\epsilon2$ and $v,w\in H^{-1}(\Pa_{\theta_n} \cap \Fu_{\theta_n} \times \{0\})$ for every $n$. This implies that $v\neq w$ and that $E(v) = E(w) = (v_0^-,v_0^+)$, contradicting the assumption that $v_0 \in \mathcal{E}$.
\end{proof}

Lemma \ref{lem:eps'} is an immediate consequence of the following general result, which we prove using Corollary \ref{cor:bus-cts}.

\begin{lemma}\label{lem:unif-cts}
Let $M$ be a closed Riemannian manifold without conjugate points that satisfies the uniform visibility condition and admits a background metric of negative curvature, and let $X$ be its universal cover. Fix $p\in X$. Suppose $A \subset S_p X$ is closed and $B\subset X$ is such that $A^+ := \{v^+ : v\in A\}$ and $B^\infty := \{\lim_n q_n \in \ideal : q_n \in B\}$ are disjoint subsets of $\ideal$.
Then the family of functions $A\to \R$ indexed by $B$ and given by $v \mapsto b_v(q)$ are equicontinuous: for every $\epsilon>0$ there exists $\delta>0$ such that if $\measuredangle_p(v,w) < \delta$, then $|b_v(q) - b_w(q)| < \epsilon$ for every $q\in B$.
\end{lemma}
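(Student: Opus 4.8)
The plan is to argue by contradiction, reducing everything to compactness in $S_pX$ and in the compactification $\bar X$, with the hypothesis $A^+\cap B^\infty=\emptyset$ ruling out the one bad limiting configuration.

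Suppose the conclusion fails for some $\epsilon>0$: there are $v_n,w_n\in A$ with $\measuredangle_p(v_n,w_n)\to 0$ and $q_n\in B$ with $|b_{v_n}(q_n)-b_{w_n}(q_n)|\geq\epsilon$. Since $A$ is closed in the compact space $S_pX$, after passing to a subsequence $v_n\to v\in A$, and then $w_n\to v$ as well; write $\xi:=c_v(\infty)\in A^+$, and recall that $b_{v_n}(q)=b_{v_n^+}(q,p)$ because $\pi v_n=p$ (Definition \ref{def:busemann}). If $(q_n)$ has a bounded subsequence, pass to it so that $q_n\to q\in X$; then the joint continuity of $(u,q)\mapsto b_u(q)$ on $SX\times X$ (Corollary \ref{cor:bus-cts}) gives $b_{v_n}(q_n)\to b_v(q)$ and $b_{w_n}(q_n)\to b_v(q)$, contradicting our choice of $q_n$. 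So we may assume $d(p,q_n)\to\infty$; passing to a further subsequence, $q_n\to\zeta$ in the cone topology, hence $\zeta\in B^\infty$ by definition, so $\zeta\neq\xi$ by the hypothesis $A^+\cap B^\infty=\emptyset$.

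The core estimate controls $|b_{v_n}(q_n)-b_{w_n}(q_n)|$ by integrating the gradient of $\psi_n:=b_{v_n^+}(\cdot,p)-b_{w_n^+}(\cdot,p)$ along the geodesic segment $[p,q_n]$. Since $\grad_x b_{v_n^+}$ and $\grad_x b_{w_n^+}$ are the unit vectors at $x$ pointing away from $v_n^+$ and $w_n^+$, we have $|\grad_x\psi_n|\leq\measuredangle_x(v_n^+,w_n^+)$, and since $\psi_n(p)=0$,
\begin{equation*}
|b_{v_n}(q_n)-b_{w_n}(q_n)|=|\psi_n(q_n)|\leq\int_{[p,q_n]}\measuredangle_x(v_n^+,w_n^+)\,dx,
\end{equation*}
the integral taken with respect to arclength. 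If $v_n^+=w_n^+$ then $\psi_n\equiv 0$; otherwise let $\mu_n$ be a geodesic joining $v_n^+$ and $w_n^+$, so $\measuredangle_x(v_n^+,w_n^+)\leq\measuredangle_x(\mu_n)$ in the notation of Definition \ref{def:vis}. Both endpoints of $\mu_n$ converge to $\xi$, which forces $\mu_n$ to leave every compact set (otherwise a subsequence of the $\mu_n$ would converge to a geodesic with both ideal endpoints equal to $\xi$, impossible in a visibility manifold); and since $q_n\to\zeta\neq\xi$, a Gromov-product estimate from uniform visibility shows that for large $n$ every $x\in[p,q_n]$ satisfies $d(x,\mu_n)\geq\min\bigl(d(p,\mu_n)-d(p,x),\,d(p,x)-C\bigr)$ with $C$ independent of $n$. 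Thus for large $n$ the distance from $[p,q_n]$ to $\mu_n$ is everywhere large, uniform visibility makes $\measuredangle_x(\mu_n)$ small along the whole segment, and combining this with the fact that the length of $[p,q_n]$ inside any fixed ball $B(p,R)$ is at most $R$ one drives $\int_{[p,q_n]}\measuredangle_x(v_n^+,w_n^+)\,dx<\epsilon$ for $n$ large, the desired contradiction.

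The main obstacle is precisely this last step: since $d(p,q_n)\to\infty$, the qualitative bound "subtended angle $\le\epsilon'$ everywhere on $[p,q_n]$" does not by itself control the integral of the angle along $[p,q_n]$ — one needs a quantitative, summable decay of $\measuredangle_x(\mu_n)$ in terms of $d(x,\mu_n)$, which the definition of uniform visibility does not directly provide. Here I would use the negatively curved background metric $g_0$ together with the Morse lemma (as already used in Lemma \ref{lem:cpt}): in negative curvature visibility angles decay exponentially and distinct geodesics fellow-travel only for bounded time, and transferring these facts along the bounded-distance correspondence between $g$- and $g_0$-geodesics gives $\int_{[p,q_n]}\measuredangle_x(\mu_n)\,dx\to 0$ as the central ball is enlarged, uniformly in $n$. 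Everything else — the contradiction setup, the split into bounded and unbounded cases for $(q_n)$, and the gradient integration — is routine.
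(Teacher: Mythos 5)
Your reduction by contradiction and the split into the bounded case ($q_n\to q\in X$, handled by Corollary \ref{cor:bus-cts}) and the unbounded case ($q_n\to\zeta\in B^\infty$, $\zeta\neq\xi$) is sound, but the core analytic step in the unbounded case has a genuine gap, and it is exactly the one you flag. Bounding $|\psi_n(q_n)|$ by $\int_{[p,q_n]}\measuredangle_x(v_n^+,w_n^+)\,dx$ requires an \emph{integrable} decay of the subtended angle along a segment whose length tends to infinity, and in the metric $g$ no such rate exists: uniform visibility only supplies a qualitative function $L\mapsto\epsilon(L)$ with no decay rate (Eberlein's derivation of it from the background metric is a compactness argument), and on a surface without conjugate points geodesic divergence --- hence angular decay --- can be sub-exponential. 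Your proposed repair does not close this: the Morse lemma controls the Hausdorff distance between a $g$-geodesic and the $g_0$-geodesic with the same endpoints, but it gives no control on the angle their initial velocities make at a point $x$; two $g$-geodesic rays from $x$ can fellow-travel a pair of $g_0$-rays whose $g_0$-angle at $x$ is exponentially small while their own $g$-angle at $x$ is not. So the exponential decay in $g_0$ cannot be transferred to the integrand $|\grad_x\psi_n|$, which is intrinsically a $g$-quantity. Moreover the integral bound is lossy by design: $\psi_n(q_n)$ can be small by cancellation even when $\int|\grad\psi_n|$ is large, so you are trying to prove something strictly harder than the lemma.

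The paper avoids the geodesic integration entirely. It extends $b_x(q,p):=d(q,x)-d(p,x)$ and the Gromov-product-type quantity $\beta_p(x,y):=d(x,p)+d(y,p)-d(x,y)$ continuously to $\bar X$ (respectively to $\bar X\times\bar X$ minus the ideal diagonal), using Corollary \ref{cor:bus-cts} and \cite[Corollary 2.18]{CKW}, and then writes $b_v(q)=d(q,p)-\beta_p(q,v^+)$. Equicontinuity is then just uniform continuity of $\beta_p$ on the compact set $A^+\times\bar B$, which avoids the ideal diagonal precisely because $A^+\cap B^\infty=\emptyset$. In your contradiction framework this would finish the unbounded case in one line: $b_{v_n}(q_n)-b_{w_n}(q_n)=\beta_p(q_n,w_n^+)-\beta_p(q_n,v_n^+)\to\beta_p(\zeta,\xi)-\beta_p(\zeta,\xi)=0$. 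I would replace your gradient-integration step with this identity; the rest of your setup can stay.
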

\begin{proof}
We start by defining $b_x(q,p)$ and $\beta_p(x,y)$ when $x,y\in X$, not just when $x,y\in \ideal$.
Given $x,q,p\in X$, let
\begin{equation}\label{eqn:bx}
b_x(q,p) := d(q,x) - d(p,x).
\end{equation}
This represents how much longer it takes to get to $x$ if you start at $q$, compared to starting at $p$. When $x\in \ideal$, recall that $b_x(q,p)$ is the Busemann function defined by Definition \ref{def:busemann}, and if a sequence $x_n \in X$ converges to $x\in \ideal$, then $b_{x_n}(q,p) \to b_x(q,p)$; see \cite[Corollary 2.18]{CKW}.
Thus by Corollary \ref{cor:bus-cts}, $(x,q,p) \mapsto b_x(q,p)$ is continuous on $\bar{X} \times X \times X$.

Now given $x,y,p\in X$, let
\begin{equation}\label{eqn:beta}
\beta_p(x,y) = d(x,p) + d(y,p) - d(x,y).
\end{equation}
This represents the extra distance it takes to travel via $p$ when going from $x$ to $y$. Given any point $q$ on the geodesic from $x$ to $y$, we have $d(x,y) = d(x,q) + d(q,y)$ and thus
\begin{equation}\label{eqn:beta2}
\begin{aligned}
\beta_p(x,y) &= d(x,p) + d(y,p) - d(x,q) - d(y,q) \\
&= -(b_x(q,p) + b_y(q,p)).
\end{aligned}
\end{equation}
Comparing this to \eqref{eqn:beta-p}, we see that $(p,x,y) \mapsto \beta_p(x,y)$ is a continuous function on $X\times (\bar{X}\times\bar{X} \setminus \{(\xi,\xi) : \xi\in \ideal\})$. (Recall that if $x\in \ideal$ then the value of $\beta_p(x,x)$ would be infinite.) Moreover, given $p,q\in X$ and $\xi\in \ideal$, we have
\[
\beta_p(q,\xi) = -b_\xi(q,p) - b_q(q,p) = -b_\xi(q,p) + d(q,p).
\]
Now in the setting of Lemma \ref{lem:unif-cts}, we have fixed $p\in X$ and see that for each $q\in B$ and $v\in A$ we have
\begin{equation}\label{eqn:b-beta}
b_v(q) = b_{v^+}(q,p) = d(q,p) - \beta_p(q,v^+).
\end{equation}
Let $\bar{B}$ be the set of all limit points of $B$ in $\bar{X}$, and observe that $A^+ \times \bar{B}$ is a compact set on which $\beta_p$ is continuous; this is where we use the assumption that $A^+ \cap B^\infty = \emptyset$. Thus $\beta_p$ is uniformly continuous on this set, and for every $\epsilon>0$ there is $\delta>0$ such that if $\measuredangle_p(v,w) < \delta$, then $|\beta_p(q,v^+) - \beta_p(q,w^+)| < \delta$, in which case \eqref{eqn:b-beta} gives $|b_v(q) - b_w(q)| < \epsilon$, which completes the proof of Lemma \ref{lem:unif-cts}.
\end{proof}

\bibliographystyle{amsalpha}
\bibliography{counting}

\end{document}